\def\natural{{\mathbb N}}
\def\zahlen{{\mathbb Z}}
\def\real{{\mathbb{R}}}
\def\biglip{\text{\normalfont Lip}} %local lipschitz constant
\def\Xint#1{\mathchoice
   {\XXint\displaystyle\textstyle{#1}}%
   {\XXint\textstyle\scriptstyle{#1}}%
   {\XXint\scriptstyle\scriptscriptstyle{#1}}%
   {\XXint\scriptscriptstyle\scriptscriptstyle{#1}}%
   \!\int}
\def\XXint#1#2#3{{\setbox0=\hbox{$#1{#2#3}{\int}$}
     \vcenter{\hbox{$#2#3$}}\kern-.5\wd0}}
\def\av{\Xint-}
\def\locnorm#1,#2.{\left|{#1}\right|_{{#2},\text{\normalfont loc}}}%local norm of
\DeclareMathOperator\weight{weight}
\def\lebmeas#1.{\setbox1=\hbox{$#1$\unskip}{\mathcal L}^{\ifdim\wd1>0pt
    #1 \else 1 \fi}} %lebesgue measure
\def\hmeas#1.{\mathscr{H}^{\setbox0=\hbox{$#1\unskip$}\ifdim\wd0=0pt 1
    \else #1\fi}} %Hausdorff measure with default argument by
\newcommand{\on}{\:\mbox{\rule{0.1ex}{1.2ex}\rule{1.1ex}{0.1ex}}\:}
\def\cmass#1.{{\|#1\|}}%current mass
\def\wder#1.{{\mathscr{X}}({#1})}%L^\infty module of derivations
\def\wform#1.{{\mathscr{E}}({#1})}%L^\infty module of forms
\DeclareMathOperator\spt{spt} %support of a function, measure, etc...
\DeclareMathOperator\im{im}%image 
\def\munsplit{8mu} % to skip in split with +, * etc...
\def\hmeas#1.{\mathscr{H}^{\setbox0=\hbox{$#1\unskip$}\ifdim\wd0=0pt 1
    \else #1\fi}} %Hausdorff measure with default argument by
\def\pcreatenrm#1#2#3{\expandafter\def\csname
  #1nrm\endcsname##1.{\left #3 ##1\right #3_{#2}} \expandafter\def\csname
  #1nrmname\endcsname{\left #3\,\cdot\,\right #3_{#2}}} % creates a norm
\def\pcreatecurrentnrm#1#2{\expandafter\def\csname
  #1nrm\endcsname##1.{{#2}\left ( ##1\right )} \expandafter\def\csname
  #1nrmname\endcsname{{#2}\left (\,\cdot\,\right )}} % creates a norm
\def\pcreateasymnrm#1#2#3#4{\expandafter\def\csname
  #1nrm\endcsname##1.{\left #3 ##1\right #4_{#2}} \expandafter\def\csname
  #1nrmname\endcsname{\left #3\,\cdot\,\right #4_{#2}}} % creates a norm
\newcommand{\dist}{\operatorname{dist}}
\def\lipalgb#1,#2.{{\rm Lip}_{\text{\normalfont b},#2}(#1)}% bounded
\def\lipalg#1.{{\rm Lip}_{\text{\normalfont b}}(#1)} %Lipschitz
\DeclareMathOperator\diam{diam}%diametre
\def\glip#1.{{\bf L}(#1)} %global Lipschitz constant
\DeclareMathOperator\curves{Curves}%space of curves
\def\albrep#1.{{\mathcal A}_{#1}} %symbol for an Alberti
\def\hwder#1,#2.{\setbox1=\hbox{$#2$\unskip}\mathscr{X}^{\ifdim\wd1>0pt
  #2\else k\fi}({#1})}%higher dimensional module
\def\hwform#1,#2.{\setbox1=\hbox{$#2$\unskip}\mathscr{E}^{\ifdim\wd1>0pt
  #2\else k\fi}({#1})} %higher dimensional module
\def\adsys{\{(X_i,\mu_i)\}_{i\in I}}
\def\brad{\text{\normalfont rad}} % radius of ball
\def\ccell#1,#2.{\setbox1=\hbox{$#1$\unskip}\setbox2=\hbox{$#2$\unskip} 
  		 \text{\normalfont Cell}_{\ifdim\wd2>0pt #2\else N\fi}
		 ({\ifdim\wd1>0pt #1\else X_i^{(1)}\fi})}
\def\bdcell#1,#2.{\setbox1=\hbox{$#1$\unskip}\setbox2=\hbox{$#2$\unskip} 
                 \text{\normalfont Bd}({\ifdim\wd1>0pt #1\else f_i\fi}
                 {\ifdim\wd2>0pt ,#2\fi})}
\def\makeunderscoreletter{\catcode`_=11}
\def\makecolonletter{\catcode`:=11}
\def\unmakecolonletter{\catcode`:=12}
\def\makeunderscoresub{\catcode`_=8}
\def\sync #1.{\makeunderscoreletter\makecolonletter\expandafter\ifx\csname
  sync#1\endcsname\relax Undefined\else \csname
  sync#1\endcsname\fi\makeunderscoresub\unmakecolonletter}
\def\syncthm:normalprecurr{
5.35}\makeunderscoresub\unmakecolonletter
\def\synclem:locality{
27}\makeunderscoresub\unmakecolonletter
\def\syncdef:ext{ 
7.9}\makeunderscoresub\unmakecolonletter
\def\syncrem:ext{
5.1}\makeunderscoresub\unmakecolonletter
\def\syncthm:repr{
1.4}\makeunderscoresub\unmakecolonletter
\numberwithin{equation}{section} % forces the section to appear before
\theoremstyle{plain}
\newtheorem{thm}[equation]{Theorem}
\theoremstyle{definition}
\newtheorem{defn}[equation]{Definition}
\newtheorem{constr}[equation]{Construction}
\theoremstyle{remark}
\newtheorem{exa}[equation]{Example}
\newtheorem{rem}[equation]{Remark}
\begin{document}
\title{Examples of $2$-unrectifiable normal currents}
\author{Andrea Schioppa}
\address{ETHZ}
\email{andrea.schioppa@math.ethz.ch}
\thanks{The author was supported by the ``ETH Zurich Postdoctoral Fellowship Program and the Marie Curie Actions
  for People COFUND Program''}
\keywords{Cube complex, Metric current, Nagata dimension}
\subjclass[2010]{53C23, 49Q15}
%%%%%%%%%%%%%%%%%%%%%%%%%%%%%%%%%%%%%%%%%%%%%%%%%%%%%%%%%%%%%%%%%%%%%%%%%%
\begin{abstract}
We construct new examples of normal (metric) currents using inverse systems of
cube complexes. For any $N\ge 2$ we provide examples of
$N$-dimensional normal currents whose associated vector fields are
simple, and whose supports are purely $2$-unrectifiable and have
Nagata dimension $N$. We show that in $l^\infty$ normal currents can
be realized as limits in the flat distance of currents associated to
cube complexes.
\end{abstract}
%%%%%%%%%%%%%%%%%%%%%%%%%%%%%%%%%%%%%%%%%%%%%%%%%%%%%%%%%%%%%%%%%%%%%%%%%%
\maketitle
\tableofcontents
%%%%%%%%%%%%%%%%%%%%%%%%%%%%%%%%%%%%%%%%%%%%%%%%%%%%%%%%%%%%%%%%%%%%%%%%%%
\section{Introduction}
\label{sec:intro}
%Background[metric currents; lack of examples; williams: carnot
%groups; integral representation]
\subsection*{Background}
\label{subsec:background}
%%%%%%%%%%%%%%%%%%%%%%%%%%%%%%%%%%%%%%%%%%%%%%%%%%%%%%%%%%%%%%%%%%%%%%%%%%
\def\ccgroup{\mathbb{G}}
%%%%%%%%%%%%%%%%%%%%%%%%%%%%%%%%%%%%%%%%%%%%%%%%%%%%%%%%%%%%%%%%%%%%%%%%%%
Metric currents were introduced by Ambrosio-Kirchheim in \cite{ambrosio-kirch} to
generalize the notion of Federer-Fleming~\cite{federer_fleming_currents} currents to the
metric setting. One motivation to introduce these objects is
formulating / understanding Plateau's problem in metric spaces; there
are also other geometric applications, see for instance
\cite{wenger_filling,wenger_as_rank,wenger_nonpol_nilp}. Lang~\cite{lang_local_currents}
has also formulated a more general
version of the theory in~\cite{ambrosio-kirch} which is  more suitable for some
geometric applications as it does not require the mass of currents to
be finite. In the following we often use the word current to refer to
metric currents, when we refer to Federer-Fleming currents we always
use the term classical current.
\par Unfortunately, as of today there are not many examples of metric
currents. Most of the theory / applications has been developped
looking either at \emph{rectifiable} or \emph{integral currents}, as these objects
admit an alternative (and more concrete) description in the framework
of \emph{rectifiable sets} \cite{ambrosio-rectifiability} in metric spaces. Thus, this work
was in part motivated by the wish to provide new examples of metric currents. 
\par Moreover, it is not even clear what metric currents in Euclidean
spaces are; while the Ambrosio-Kirchheim normal currents coincide in
$\real^n$ with the classical normal currents (of finite mass), for general currents it
is only known that the $1$-dimensional ones are flat
\cite{curr_alb}. While \cite{curr_alb} gives a
geometric description of $1$-currents in metric spaces, when the first
version of this preprint appeared even the
specific question of whether $2$-dimensional normal currents in
$\real^4$ have some special geometric structure was open.
\par In the setting of Carnot groups Williams~\cite{williams-currents} has
obtained a complete classification of normal currents, and also a
partial classification of general metric currents. The study of Carnot
groups actually shows a drawback in the existing definition of metric
currents. In fact, in a non-abelian Carnot group $\ccgroup$ there are
many objects which satisfy  all the axioms in
\cite{ambrosio-kirch,lang_local_currents}
except for the \emph{joint continuity} (Axiom (ii) in Def.~3.1 in
\cite{ambrosio-kirch}; Axiom (2) in Def.~2.1 in \cite{lang_local_currents}). Concretely,  a $k$-dimensional
``current'' $T$ might be obtained using an integral representation
(like for a classical current):
\begin{equation}
  \label{eq:repr}
  T=\vec T\,\mu_{\ccgroup},
\end{equation}
$\vec T$ being a smooth $k$-vector field (in the $k$-th exterior power
of the horizontal distribution), and $\mu_{\ccgroup}$ denoting the Haar measure (here we use Lang's
definition, otherwise just restrict $\mu_{\ccgroup}$ to a set of
finite measure). In general, $T$ is not going to satisfy the joint
continuity axiom: for example, in the first Heisenberg group $X\wedge
Y\,\lebmeas 3.$ would not define a $2$-current in the sense of \cite{ambrosio-kirch,lang_local_currents}.
\par Williams \cite{williams-currents} has also provided examples of
$2$-dimensional normal currents in purely $2$-unrectifiable Carnot
groups. These examples are of the form $(\vec T_1 - \vec
T_2)\,\mu_{\ccgroup}$ where $\vec T_1$ and $\vec T_2$ are constant
simple $2$-vectors chosen to cancel their ``boundaries''. Note again that
the ``currents'' $\vec T_1\,\mu_{\ccgroup}$, $\vec T_2\,\mu_{\ccgroup}$
are not actually currents in the sense of \cite{ambrosio-kirch,lang_local_currents}: this
is unavoidable as \cite{williams-currents} shows that a
$k$-unrectifiable Carnot group cannot admit a $k$-normal current where
the vector field is simple.
\par The arguments in \cite{williams-currents} also show that it is
convenient to work with an integral representation of metric
currents. In \cite{curr_alb} we have showed that any
$k$-current $T$, in the sense of
\cite{ambrosio-kirch,lang_local_currents}, and whose
support is a countable union of doubling metric spaces, admits an
integral representation:
\begin{equation}
  \label{eq:int_rep}
  T=\vec T\,\cmass T.
\end{equation}
where $\cmass T.$ is the mass measure, and $\vec T$ is a $k$-vector
field in the sense of Defn.~\ref{defn:exterior_vecs_forms}. While the
representation~(\ref{eq:int_rep}) is analogous to the classical
setting, a $T$ satisfying~(\ref{eq:int_rep}) is not necessarily going
to be a current in the sense of \cite{ambrosio-kirch,lang_local_currents}, as it might
fail the joint continuity axiom. Having in mind the case of Carnot
groups, in this work we will \emph{drop the joint continuity axiom}
from the definition of metric current, i.e.~we
will define a $k$-current $T$ to be an object admitting a
representation like~(\ref{eq:int_rep}): these objects were called
\emph{precurrents} in \cite{curr_alb} following a terminology
introduced by \cite{williams-currents}. Note that by Theorem \sync
thm:normalprecurr.\ in \cite{curr_alb} the normal currents in the
sense of \cite{ambrosio-kirch,lang_local_currents} and in our extended
sense \emph{coincide}; as in this
work we are essentially concerned with normal currents, dropping the
joint continuity axiom does not cause any inconsistency. However, our
more general notion of current might be of independent interest,
e.g.~in the setting of Carnot groups.
%%%%%%%%%%%%%%%%%%%%%%%%%%%%%%%%%%%%%%%%%%%%%%%%%%%%%%%%%%%%%%%%%%%%%%%%%%
% Results[motivating question; nagata dimension; further results]
\subsection{Results}
\label{subsec:results}
The Question which motivated this paper is:
\begin{description}
\item[(Q1)] Are there examples of nontrivial normal $N$-dimensional
  currents ($N\ge 2$) whose $N$-vector field is simple and whose
  support is purely $N$-unrectifiable?
\end{description}
In Section~\ref{sec:exas} we
\begin{description}
\item[(Ex)] Exhibit for each $N\ge 2$ an example $N_\infty$ of a
  nontrivial normal $N$-dimensional current whose support $X_\infty$
  is purely $2$-unrectifiable and has Nagata and topological dimension
  $N$. Moreover, the metric measure space $(X_\infty, \cmass
  N_\infty.)$ may be taken to admit a $(1,1)$-Poincar\'e inequality.
\end{description}
These examples are optimal from three perspectives:
\begin{description}
\item[(Prs1)] One cannot have a $1$-unrectifiable support because of
  \cite{paolini_acyclic,paolini_one_normal};
\item[(Prs2)] By a result of Z\"ust \cite{zust_phd} if a space $X$
  supports a nontrivial normal $k$-current it must have Nagata
  dimension at least $k$ (even though the topological dimension might
  be $1$, but not $0$ by \cite{paolini_acyclic,paolini_one_normal});
\item[(Prs3)] The vector field is simple in contrast to the case of
  Carnot groups \cite{williams-currents}.
\end{description}
Essentially, the Nagata dimension \cite{lang_nagata} is a version
of the topological dimension in the Lipschitz category, and thus it is
better suited to handle some questions arising in analysis on metric
spaces, e.g.~questions regarding the extendability of Lipschitz
maps. Note that the topological dimension always bounds from below the
Nagata dimension. By a beautiful Theorem of Buyalo-Lebedeva
\cite{buyalo_nagata} the Nagata dimension and the
topological dimension of a \emph{self-similar} metric space coincide. As an
application one concludes that the Nagata dimension of a Carnot group
coincides with the topological dimension (Carnot groups are
self-similar because they have a familiy of dilations and translations; for another
argument see
\cite{ledonne_rajala_nagata}). 
Note also that by \cite{williams-currents} an
$n$-dimensional non-abelian Carnot group cannot admit a nontrivial
$n$-normal current.
\par The examples $N_\infty$, $X_\infty$ are obtained by relaxing the
requirements of inverse systems $\{(X_i,\mu_i)\}_i$ in \cite{cheeger_inverse_poinc} (see
Defn.~\ref{defn:wad_sys}). In Theorem~\ref{thm:limit_curr_wadsys} we
show how to associate a limit current to the inverse limit $(X_\infty,
\mu_\infty)$ of such a system and also show that $(X_\infty,
\mu_\infty)$ admits a kind of ``calculus'' similar to the one in
$\real^N$ (despite being, in general, purely $2$-unrectifiable).
\par The spaces $\{X_i\}$ are $N$-dimensional cube complexes and
the measures $\{\mu_i\}$ restrict to a constant multiple of Lebesgue
measure on each cell. Moreover, to each $X_i$ one can naturally
associate a normal ``cubical'' current $N_i$, and $N_\infty$ is the
weak limit of the $N_i$. Therefore, a natural question is how general is the idea of
constructing a normal $N$-current as a limit of cubical currents. In
Section~\ref{sec:appx_cube} we show that in $l^\infty$ one can
approximate normal currents by cubical ones in the flat distance (and
hence in the weak topology) while keeping good bounds on the masses.
\par Finally, in analyzing the module of Weaver derivations for these
examples we have found useful some results relating the Nagata dimension
and approximations of $X$ by polyhedra, see Subsection~\ref{subsec:nagata_dim}.
%%%%%%%%%%%%%%%%%%%%%%%%%%%%%%%%%%%%%%%%%%%%%%%%%%%%%%%%%%%%%%%%%%%%%%%%%%
% Further directions[q1, q2]
\subsection{Further directions}
\label{subsec:further}
In the first version of this work we asked the following natural question:
\begin{description}
\item[(Q2)] Is it possible to construct in $\real^{n\ge 4}$ a
  $2$-dimensional nontrivial normal current $N$ whose support is
  purely $2$-unrectifiable?
\end{description}
G.~Alberti had told us that such examples do not exist in
$\real^3$. More generally a result of G.~Alberti and A.~Masacesi
shows that in $\real^k$ a \emph{codimension one} normal current can be
represented as an integral of $(k-1)$-rectifiable currents. In a
forthcoming paper, in joint work with U.~Lang, we settle \textbf{(Q2)}
in the negative, the crucial point being that \textbf{(Q2)} is about
a \emph{codimension two} current.
Another natural question is:
\begin{description}
\item[(Q3)] If $(X,\mu)$ is a metric measure space where $X$ has
  Nagata dimension $N$, is the analytic dimension
  (Defn.~\ref{defn:analy_dim}) of $(X,\mu)$ at most $N$?
\end{description}
We provide a positive answer to \textbf{(Q3)},
Theorem~\ref{thm:tower}, under an additional assumption. Note that by
Theorem~\ref{thm:poly_appx} a counterexample to \textbf{(Q3)} would
provide for some $m\ge 1$ a Lipschitz map:
\begin{equation}
  \label{eq:counter1}
  F:X\to\real^{N+m}
\end{equation}
with $dF$ having rank $N+m$ on a set of positive measure, and such
that $F$ can be approximated in the weak* topology (i.e.~pointwise with
uniform bound on the Lipschitz constants) by maps:
\begin{equation}
  \label{eq:counter2}
  \tilde F: X\to\real^{N+m}
\end{equation}
which factor through $N$-dimensional polyhedra. In particular,
$d\tilde F$ would have rank at most $N$ $\mu$-a.e., while being close
to $dF$ in the weak* topology.
%%%%%%%%%%%%%%%%%%%%%%%%%%%%%%%%%%%%%%%%%%%%%%%%%%%%%%%%%%%%%%%%%%%%%%%%%%
\subsection*{Acknowledgements}
\label{subsec:ackwn}
The examples discussed here arose in conversations between B.~Kleiner
and me, where we were looking at the question of producing purely
$2$-unrectifiable higher dimensional versions of the so-called Laakso
spaces. I gratefully thank B.~Kleiner for allowing me to include them
here, and for motivating me to prove Theorem~\ref{thm:cubical_appx}.
%%%%%%%%%%%%%%%%%%%%%%%%%%%%%%%%%%%%%%%%%%%%%%%%%%%%%%%%%%%%%%%%%%%%%%%%%%
\section{Background}
\label{sec:back}
%[Weaver derivations; Metric currents; Nagata dimension]
In this Section we recall material on Weaver derivations, Metric
currents, and the Nagata dimension. For Weaver derivations and metric
currents we recall many concepts in a
dry and formulaic style, and refer the interested reader to
\cite{deralb,curr_alb,derivdiff} for more details. For the Nagata
dimension we focus on new results on approximations by polyhedra
(Theorem~\ref{thm:poly_appx}), and finite dimensionality (Theorem~\ref{thm:tower}).
%%%%%%%%%%%%%%%%%%%%%%%%%%%%%%%%%%%%%%%%%%%%%%%%%%%%%%%%%%%%%%%%%%%%%%%%%%
\subsection{Weaver derivations}
\label{subsec:derivations}
%[L^\infty-normed modules; derivations and weak* topology; example of
%R^n and Rademacher's Theorem; submodules
%and locality; index and free decomposition; Applications to R^n via
%Stone-Weierstrass; Alberti representations [on curves and weights];
%modules of forms; push-forward and pull-back; projective k-th power;
%definition of wedge; X^k, E^k and their pairing]
%%%%%%%%%%%%%%%%%%%%%%%%%%%%%%%%%%%%%%%%%%%%%%%%%%%%%%%%%%%%%%%%%%%%%%%%%%
\par For more information we refer the reader to \cite{weaver00,deralb}.
An \textbf{$L^\infty(\mu)$-module} $M$ is a Banach space $M$ which
is also an $L^\infty(\mu)$-module and such that for all
$(m,\lambda)\in M\times L^\infty(\mu)$ one has:
\begin{equation}
  \label{eq:boundedaction}
  \|\lambda m\|_M\le\|\lambda\|_{L^\infty(\mu)}\,\|m\|_M.
\end{equation}
Among $L^\infty(\mu)$-modules a special r\^ole is played by
\textbf{$L^\infty(\mu)$-normed modules}:
\begin{defn}[Normed modules]
  \label{defn:local_norm}
  An $L^\infty(\mu)$-module $M$ is said to be an
  \textbf{$L^\infty(\mu)$-normed module} if there is a map
  \begin{equation}
    |\cdot|_{M,{\text{loc}}}:M\to L^\infty(\mu) 
  \end{equation}
such that:
\begin{enumerate}
\item For each $m\in M$ one has $|m|_{M,{\text{loc}}}\ge0$;
\item For all
  $c_1,c_2\in\real$ and $m_1,m_2\in M$ one has:
  \begin{equation}
        |c_1m_1+c_2m_2|_{M,{\text{loc}}}\le|c_1||m_1|_{M,{\text{loc}}}+|c_2||m_2|_{M,{\text{loc}}};
  \end{equation}
\item For each $\lambda\in L^\infty(\mu)$ and each $m\in M$, one has:
  \begin{equation}
    |\lambda m|_{M,{\text{loc}}}=|\lambda|\,|m|_{M,{\text{loc}}};
  \end{equation}
\item The local seminorm $|\cdot|_{M,{\text{loc}}}$ can be used to
  reconstruct the norm of any $m\in M$:
  \begin{equation}
    \|m\|_M=\|\,|m|_{M,{\text{loc}}}\,\|_{L^\infty(\mu)}.
  \end{equation}
\end{enumerate}
\end{defn}
%%%%%%%%%%%%%%%%%%%%%%%%%%%%%%%%%%%%%%%%%%%%%%%%%%%%%%%%%%%%%%%%%%%%%%%%%%
Let $\lipalg X.$ denote the algebra of bounded real-valued Lipschitz
functions defined on $X$. This is a Banach algebra with the norm the
max of the sup norm $\|f\|_\infty$ and the Lipschitz constant $\glip f.$ (see \cite[Sec.~1.6]{weaver_book99}).
\begin{defn}[Weaver derivation]
  \label{defn:derivations}
    A \textbf{derivation $D:\lipalg X.\to L^\infty(\mu)$} is a weak*
    continuous, bounded linear map satisfying the product rule:
    \begin{equation}
      D(fg)=fDg+gDf.
    \end{equation}
A sequence $f_n\to f$ in weak* topology on $\lipalg X.$ if there is a
uniform bound on the global Lipschitz constants $\glip f_n.$ of the
$\{f_n\}$ and if $f_n\to f$ pointwise.
\par The collection of all derivations $\wder\mu.$ is an
  $L^\infty(\mu)$-normed module
  \cite{weaver00}
  and the corresponding  local norm will be denoted by
$\locnorm\,\cdot\,,{\wder\mu.}.$. Note also that $\wder\mu.$ depends
only on the measure class of $\mu$.
\end{defn}
%%%%%%%%%%%%%%%%%%%%%%%%%%%%%%%%%%%%%%%%%%%%%%%%%%%%%%%%%%%%%%%%%%%%%%%%%%
\begin{exa}
  \label{exa:weaver_der}
  Consider $(\real^n,\lebmeas n.)$ and let $\partial_\alpha$ be the
  partial derivative in the $\alpha$-direction. By Rademacher's Theorem
  $\partial_\alpha$ defines a bounded linear map
  $\partial_\alpha:\lipalg \real^n.\to L^\infty(\lebmeas n.)$. The weak*
  continuity can be reduced to the $1$-dimensional case (and hence to
  integration by parts) using Fubini's Theorem.
\end{exa}
%%%%%%%%%%%%%%%%%%%%%%%%%%%%%%%%%%%%%%%%%%%%%%%%%%%%%%%%%%%%%%%%%%%%%%%%%%
\begin{defn}[Submodules and locality]
  \label{defn:submodules}
  Consider a Borel set $U\subset X$ and a derivation
  $D\in\wder\mu\on U.$. The derivation $D$ can be also regarded as
  an element of $\wder\mu.$ by extending $Df$ to be $0$ on $X\setminus
  U$. In particular,
  the module $\wder\mu\on U.$ can be naturally identified with the
  submodule $\chi_U\wder\mu.$ of $\wder\mu.$.
  \par By Lemma \sync lem:locality.\ in \cite{weaver00} derivations are \textbf{local} in the following sense:
  if $U$ is $\mu$-measurable and if $f,g\in\lipalg X.$ agree on $U$,
  then for each $D\in\wder\mu.$, $\chi_UDf=\chi_UDg$. Note that locality allows to extend the action of derivations on
  Lipschitz functions $f$, so that $Df$ is well-defined.
\end{defn}
%%%%%%%%%%%%%%%%%%%%%%%%%%%%%%%%%%%%%%%%%%%%%%%%%%%%%%%%%%%%%%%%%%%%%%%%%%
\begin{defn}[Analytic dimension]
  \label{defn:analy_dim}
  We define the \textbf{analytic dimension} of the metric measure
  space $(X,\mu)$ to be the \textbf{index} of $\wder\mu.$:
  \begin{multline}
    \label{eq:analy_dim_1}    
    \text{index of $\wder\mu.$} = \sup\{
    n\in\natural: \text{$\exists U$ Borel: $\wder\mu\on U.$ contains
      $n$-independent}\\ \text{elements (over $L^\infty(\mu\on U)$)
    }
    \}.
  \end{multline}
  If either $\spt\mu$ or $X$ are doubling (see~\cite{deralb}) then $\wder\mu.$ has finite index;
  moreover, if $\wder\mu.$ has finite index, it can be decomposed into a
  direct sum of free submodules (over smaller rings), see \cite{derivdiff}.
\end{defn}
%%%%%%%%%%%%%%%%%%%%%%%%%%%%%%%%%%%%%%%%%%%%%%%%%%%%%%%%%%%%%%%%%%%%%%%%%%
\begin{exa}[Index in $\real^n$]
  \label{exa:poly_finite}
  Let $\mu$ be a Radon measure on $\real^n$; by the
  Stone-Weierstrass Theorem for Lipschitz algebras~\cite[Thm.~4.1.8]{weaver_book99}
  the polynomials in $\{x_1,\cdots,x_n\}$ (after truncating the
  polynomials to have bounded absolute value, e.g.~postcomposing with
  $\max(\cdot, K)$,
  $\min(\cdot, K)$) are weak*-dense in $\lipalg \real^n.$ and so the index of
  $\wder\mu.$ is at most $n$.
\end{exa}
%%%%%%%%%%%%%%%%%%%%%%%%%%%%%%%%%%%%%%%%%%%%%%%%%%%%%%%%%%%%%%%%%%%%%%%%%%
A standard way to produce derivations is to use Alberti
representations. We deal here with a more restrictive situation, for
the general case see  \cite{deralb}.
%%%%%%%%%%%%%%%%%%%%%%%%%%%%%%%%%%%%%%%%%%%%%%%%%%%%%%%%%%%%%%%%%%%%%%%%%%
\begin{defn}[Alberti representations]
  \label{defn:alb_rep}
  Let $\curves(X)$ denote the space of Lipschitz curves in $X$
  topologized with the Fell topology \cite[(12.7)]{kechris_desc} on their graphs.
  Let $\mu$ be a Radon measure on $X$. An \textbf{Alberti representation} of
  $\mu$ is a pair $\albrep.=[Q,w]$ where $Q$ is a Radon measure on
  $\curves(X)$ and $w$ a Borel function $w:X\to[0,\infty)$ such that:
  \begin{equation}
    \label{eq:alb_rep_1}
    \mu=\int_{\curves(X)}w\cdot\hmeas._\gamma\,dQ(\gamma),
  \end{equation}
  where $\hmeas._\gamma$ is the length measure on
  $\gamma$, and the integral is interpreted in the weak* sense. We say that
  $\albrep.$ is \textbf{$C$-Lipschitz} (resp.~\textbf{$[C,D]$-biLipschitz}) if $Q$ is concentrated on the
  set of $C$-Lipschitz (resp.~$[C,D]$-biLipschitz) curves. 
  \par  Let $\albrep.=[Q,w]$ be a $C$-Lipschitz Alberti representation of a measure
  $\nu\ll\mu$; then the formula:
  \begin{multline}
    \label{eq:alberti_to_derivation_1}
    \int_X gD_{\albrep.}f\,d\nu =
    \int_{\curves(X)}dQ(\gamma)\int wg\cdot
    \partial_\gamma f\,d\hmeas._\gamma\\
    (\forall (g,f)\in C_c(X)\times\lipalg X.),
  \end{multline}
  where $\partial_\gamma f$ denotes the derivative of $f$ along $\gamma$,
  defines a derivation $D_{\albrep.}\in\wder\nu.\subset\wder\mu.$ with
  $\|D_{\albrep.}\|_{\wder\mu.}\le C$.
\end{defn}
%%%%%%%%%%%%%%%%%%%%%%%%%%%%%%%%%%%%%%%%%%%%%%%%%%%%%%%%%%%%%%%%%%%%%%%%%%
\begin{defn}[Weaver differentials]
  \label{defn:module_forms}
  The \textbf{module of $1$-forms} $\wform\mu.$ is the dual module of
  $\wder\mu.$, i.e.~it consists of the bounded module homomorphisms
  $\wder\mu.\to L^\infty(\mu)$. The module $\wform\mu.$ is an
  $L^\infty(\mu)$-normed module and the local norm will be denoted by
  $\locnorm\,\cdot\,,{\wform\mu.}.$.
  \par To each $f\in\lipalg X.$ one can associate the $1$-form, its
  \textbf{differential}
  $df\in\wform\mu.$, by letting:
  \begin{equation}
    \label{eq:module_forms1}
    \langle df,D\rangle=Df\quad(\forall D\in\wder\mu.);
  \end{equation}
  the map $d:\lipalg X.\to\wform\mu.$ is a weak* continuous $1$-Lipschitz
  linear map satisfying the product rule $d(fg)=gdf+fdg$. Note that by
  locality (Defn.~\ref{defn:submodules}) one can
  extend the domain of $d$ to the set of Lipschitz functions so that if $f$ is Lipschitz,
  $df$ is a well-defined element of $\wform\mu.$ and
  $\|df\|_{\wform\mu.}\le\glip f.$.
\end{defn}
%%%%%%%%%%%%%%%%%%%%%%%%%%%%%%%%%%%%%%%%%%%%%%%%%%%%%%%%%%%%%%%%%%%%%%%%%%
\begin{defn}[Push-forward / Pull-back]
  \label{defn:push_pull}
  Let $F:X\to Y$ be Lipschitz and $\mu$ a Radon measure on $X$ such
  that $F_{\#}\mu$ is a Radon measure on $Y$. The
  \textbf{push-forward} map
  \begin{equation}
    \label{eq:push_pull_1}
    F_{\#}:\wder\mu.\to\wder F_{\#}\mu.
  \end{equation}
  associates to $D\in\wder\mu.$ the unique $F_{\#}D\in\wder F_{\#}\mu.$
  such that:
  \begin{multline}
    \label{eq:push_pull_2}
    \int_X g\circ F\, D(f\circ F)\,d\mu = \int_Y g \, (F_{\#}
    D)f\,dF_{\#}\mu \\(\forall (g,f)\in C_c(Y)\times\lipalg Y.).
  \end{multline}
  The dual map of $F_{\#}$ is the \textbf{pull-back}:
  \begin{equation}
    \label{eq:push_pull_3}
    \begin{aligned}
      F^{\#}:\wform F_{\#}\mu.&\to\wform\mu.\\
      df&\mapsto d(f\circ F).
    \end{aligned}
  \end{equation}
\end{defn}
%%%%%%%%%%%%%%%%%%%%%%%%%%%%%%%%%%%%%%%%%%%%%%%%%%%%%%%%%%%%%%%%%%%%%%%%%%
\begin{defn}[Exterior powers]
  \label{defn:exterior_vecs_forms}
  Similarly as for vector fields and differential forms, it is
  possible to define the exterior powers $\hwder\mu,.$ and
  $\hwform\mu,.$ (see Definition \sync def:ext.\ and Remark \sync
  rem:ext. \ in \cite{curr_alb}). We also let
  $\hwder \mu,0.=\hwform \mu,0.=L^\infty(\mu)$. Properties of
  $\hwder\mu,.$ and $\hwform\mu,.$ that we are going to use are:
  \begin{description}
  \item[(Ex1)] $\hwder\mu,.$ and $\hwform\mu,.$ are
    $L^\infty(\mu)$-normed modules and finite linear combinations of
    $k$-fold exterior powers (the simple vectors in the classical
    algebraic sense) such as
    \begin{equation}
      \label{eq:exterior_vecs_forms_1}
      D_1\wedge\cdots\wedge D_k,\quad df_1\wedge\cdots \wedge df_k,
    \end{equation}
    are dense;
  \item[(Ex2)] If $\wder\mu.$ or $\wform\mu.$ are finitely generated,
    the $k$-fold exterior products of the generators provide a
    generating set;
  \item[(Ex3)] There are exterior products
    \begin{equation}
      \label{eq:exterior_vecs_forms_2}
      \begin{aligned}
        \wedge:\hwder\mu,.\times\hwder\mu,l.&\to\hwder\mu,k+l.\\
        \wedge:\hwform\mu,.\times\hwform\mu,l.&\to\hwform\mu,k+l.
      \end{aligned}
    \end{equation}
    which are bilinear and have norm at most $1$;
  \item[(Ex4)] There is a natural bilinear pairing:
    \begin{equation}
      \label{eq:exterior_vecs_forms_3}
      \langle\cdot,\cdot\rangle:\hwder\mu,.\times\hwform\mu,k.\to L^\infty(\mu)
    \end{equation}
    which satisfies:
    \begin{equation}
      \label{eq:exterior_vecs_forms_4}
      \begin{aligned}
        \langle D_1\wedge\cdots \wedge D_k, df_1\wedge\cdots\wedge
        df_k\rangle &= \det(D_if_j)_{i,j}\\
        \left|
          \langle\vec\xi,\omega\rangle
        \right|&\le k!
        |\vec\xi|_{\hwder\mu,.}\,|\omega|_{\hwform\mu,.}\quad(\forall
        (\vec\xi,\omega)\in
        \hwder\mu,.\times\hwform\mu,k.);
      \end{aligned}
    \end{equation}
  \item[(Ex5)] Given $\vec\xi\in\hwder\mu,.$, $\omega\in\hwform
    \mu,m.$ for $m\le k$ the \textbf{interior product}
    $\vec\xi\on\omega\in\hwder\mu,k-m.$ is defined so that for each
    $\tilde\omega\in\hwform\mu,k-m.$ one has:
    \begin{equation}
      \label{eq:exterior_vecs_forms_5}
      \langle\vec\xi\on\omega, \tilde\omega\rangle=\langle\vec\xi,\omega\wedge\tilde\omega\rangle.
    \end{equation}
  \end{description}
  \end{defn}
%%%%%%%%%%%%%%%%%%%%%%%%%%%%%%%%%%%%%%%%%%%%%%%%%%%%%%%%%%%%%%%%%%%%%%%%%%
\subsection{Metric currents}
\label{subsec:metric_currs}
%[Definition via vector fields and mass measure; finite mass: mass
%norm; support; push-forward; boundary; interior product; normal
%currents; joint continuity for normal currents; weak topology and
%flat distance]
\begin{defn}[Metric currents]
  \label{defn:metric_currents}
  A \textbf{$k$-dimensional metric current} $T$ in $X$ is a pair $(\mu,\vec T)$
  where $\mu$ is a Radon measure on $X$ and $\vec
  T\in\hwder\mu,.$. Given $\omega\in\hwform\mu,.$ with integrable
  local norm, i.e.~$|\omega|_{\hwform\mu,.}\in L^1(\mu)$, we let:
  \begin{equation}
    \label{eq:metric_currents_1}
    T(\omega)=\int_X\langle\vec T,\omega\rangle\,d\mu.
  \end{equation}
  If $f_0,\cdots,f_k$ are Lipschitz functions such that
  \begin{equation}
    \label{eq:metric_currents_2}
    \omega=f_0df_1\wedge\cdots\wedge df_k
  \end{equation}
  has $\mu$-integrable local norm, we just let:
  \begin{equation}
    \label{eq:metric_currents_3}
    T(f_0,f_1,\cdots,f_k)=T(f_0df_1\wedge\cdots\wedge df_k).
  \end{equation}
  The \textbf{mass measure} of $T$ is $\cmass T.=|\vec T|_{\hwder\mu,.}\mu$, and
  if this measure is finite, then $T$ \textbf{has finite mass}; in this case
  the \textbf{mass-norm} of $T$ is:
  \begin{equation}
    \label{eq:metric_currents_4}
    \mcnrm T.=\int_X |\vec T|_{\hwder\mu,.}\,d\mu.
  \end{equation}
  The \textbf{support} $\spt T$ of $T$ is the support of $\cmass T.$.
\end{defn}
%%%%%%%%%%%%%%%%%%%%%%%%%%%%%%%%%%%%%%%%%%%%%%%%%%%%%%%%%%%%%%%%%%%%%%%%%%
\begin{defn}[Boundary and normality]
  \label{defn:boundary}
  Let $T$ be a metric current and $\{f_i\}_{i=0}^k\subset\lipalg X.$; we define:
  \begin{equation}
    \label{eq:boundary_1}
    \partial T(f_0,f_1,\cdots, f_{k-1}) = T(df_0\wedge df_1\wedge\cdots\wedge df_{k-1});
  \end{equation}
  if there is a Radon measure $\nu$ such that whenever
  \begin{equation}
    \label{eq:boundary_2}
    |df_0\wedge df_1\wedge\cdots\wedge df_{k-1}|_{\hwform \mu,k.}\in L^1(\mu)
  \end{equation}
  one has:
  \begin{equation}
    \label{eq:boundary_3}
    \left|\partial T(f_0, f_1, \cdots, f_{k-1})\right|
    \le\prod_{i=1}^{k-1}\glip f_i.\,\int_X|f_0|\,d\nu,
  \end{equation}
  then \textbf{the boundary of} $\partial T$ of $T$ is still a metric current, i.e.~one can find
  $\vec{\partial T}\in\hwder\nu,k-1.$ such that:
  \begin{equation}
    \label{eq:boundary_4}
    \partial T(f_0, f_1, \cdots, f_{k-1}) = \int_X
    f_0\langle\vec{\partial T},df_1\wedge\cdots\wedge df_{k-1}\rangle\,d\nu.
  \end{equation}
  \par A metric current $T$ whose boundary is still a current is called
  \textbf{normal}; if both $\cmass T.$ and $\cmass\partial T.$ are
  finite measures we let the \textbf{normal mass-norm} be:
  \begin{equation}
    \label{eq:boundary_5}
    \ncnrm T.=\mcnrm T.+\mcnrm\partial T..
  \end{equation}
  \par A normal current $T$ such that $\cmass\partial T.$ is locally
  finite satisfies the
  following \textbf{joint continuity axiom} (Theorem \sync thm:normalprecurr.\ in \cite{curr_alb}): if
  $\{f_{i,n}\}_{i=0,\cdots,k; n\in\natural\cup\{\infty\}}$ satisfy:
  \begin{equation}
    \label{eq:boundary_6}
    f_{i,n}\xrightarrow{\text{w*}} f_{i,\infty}\quad(\text{$\forall
      i,$ as $
      n\to\infty)$}
  \end{equation}
  and if the measures
  \begin{equation}
    \label{eq:boundary_6bis}    
      |f_{0,n}df_{1,n}\wedge\cdots\wedge df_{k,n}|_{\hwder{\cmass
          T.},k.}\,\cmass T.\quad(n\in\natural\cup\{\infty\})      
    \end{equation}
    are tight,
    then:
    \begin{equation}
      \label{eq:boundary_7}
    \lim_{n\to\infty}T(f_{0,n},f_{1,n},\cdots,f_{k,n})=T(f_{0,\infty},f_{1,\infty},\cdots,f_{k,\infty}).
  \end{equation}
\end{defn}
%%%%%%%%%%%%%%%%%%%%%%%%%%%%%%%%%%%%%%%%%%%%%%%%%%%%%%%%%%%%%%%%%%%%%%%%%%
\begin{defn}[Push-forward, interior product]
  \label{defn:push_product}
  Let $F:X\to Y$ be Lipschitz and $\mu$ a Radon measure on $X$ such
  that $F_{\#}\mu$ is a Radon measure on $Y$. The
  map $  F_{\#}:\wder\mu.\to\wder F_{\#}\mu.$ induces maps
  \begin{equation}
    \label{eq:push_product_1}
    \begin{aligned}
      F_{\#}:\hwder\mu,.&\to\hwder F_{\#}\mu,.\\
      F^{\#}:\hwform F_{\#}\mu,.&\to\hwform \mu,..
    \end{aligned}
  \end{equation}
  If $T=(\mu,\vec T)$ we let $F_{\#} T$ denote the
  \textbf{push-forward}:
  \begin{equation}
    \label{eq:push_product_2}
    F_{\#} T = (F_{\#}\mu, F_{\#}\vec T).
  \end{equation}
  If $\omega\in\hwform\mu,m.$ we let:
  \begin{equation}
    \label{eq:push_product_3}
    T\on\omega = (\mu, \vec T\on\omega).
  \end{equation}
\end{defn}
%%%%%%%%%%%%%%%%%%%%%%%%%%%%%%%%%%%%%%%%%%%%%%%%%%%%%%%%%%%%%%%%%%%%%%%%%%
\begin{defn}[Weak topology]
  \label{defn:weak_topo}
  We say that a sequence of $k$-dimensional currents $\{T_n\}$
  converges to a $k$-dimensional current
  $T$ in the \textbf{weak topology} if whenever $\{f_i\}_{i=0}^k$ are
  Lipschitz functions such that for each $i,n$ $\spt f_i\cap \spt T_n$
  is compact, one has:
  \begin{equation}
    \label{eq:weak_topo_1}
    \lim_{n\to\infty}T_n(f_0,f_1,\cdots,f_k)=T(f_0,f_1,\cdots,f_k).
  \end{equation}
  \par The \textbf{flat norm} of a current $T$ is:
  \begin{equation}
    \label{eq:weak_topo_2}
    \flatnrm T. = \inf\{\mcnrm S_1. + \mcnrm S_2.:\text{$T= S_1
      + \partial S_2$ for $S_1$, $S_2$ currents}\}.
  \end{equation}
  In particular, if $\flatnrm T_n.\to0$, then $T_n\to 0$ in the weak topology.
\end{defn}
%%%%%%%%%%%%%%%%%%%%%%%%%%%%%%%%%%%%%%%%%%%%%%%%%%%%%%%%%%%%%%%%%%%%%%%%%%
\subsection{Nagata dimension}
\label{subsec:nagata_dim}
%%%%%%%%%%%%%%%%%%%%%%%%%%%%%%%%%%%%%%%%%%%%%%%%%%%%%%%%%%%%%%%%%%%%%%%%%%
% @def
\def\faset#1.{\setbox1=\hbox{$#1$\unskip}\mathcal{C}_{\ifdim\wd1>0pt #1\else
    i\fi}}
%%%%%%%%%%%%%%%%%%%%%%%%%%%%%%%%%%%%%%%%%%%%%%%%%%%%%%%%%%%%%%%%%%%%%%%%%%
\begin{defn}[Nagata cover]
  \label{defn:naga_cov}
  Let $(C,s,N)\in(0,\infty)^2\times(\natural\cup\{0\})$; a
  \textbf{$(C,s,N)$-Nagata cover} of $X$ is a collection of $(N+1)$-families
  of sets $\{\faset.\}_{i=0,\cdots,N}$ such that:
  \begin{description}
  \item[(NSep)] If $A,B\in\faset.$ are distinct then:
    \begin{equation}
      \label{eq:naga_cov_1}
      \dist(A,B)\ge s;
    \end{equation}    
  \item[(NBd)] For each $A\in\faset.$:
    \begin{equation}
      \label{eq:naga_cov_2}
      \diam A\le Cs.
    \end{equation}
  \end{description}
A Nagata cover is \textbf{sorted} if $i\ge1$ and $B\in\faset.$ imply that for
some $A\in\faset i-1.$ one has
\begin{equation}
  \label{eq:naga_cov_3}
  \dist(A,B)<s.
\end{equation}
Note that from any Nagata cover one can produce a sorted one by moving
sets across the families $\{\faset.\}_{i=0,\cdots,N}$.
\end{defn}
%%%%%%%%%%%%%%%%%%%%%%%%%%%%%%%%%%%%%%%%%%%%%%%%%%%%%%%%%%%%%%%%%%%%%%%%%%
\begin{defn}[Nagata dimension]
  \label{defn:nag_dim}
  A metric space $X$ has \textbf{Nagata dimension} at most $N$ if
  there is a $C>0$ (cover-separation parameter) such that for each $s>0$ $X$ admits a
  $(C,s,N)$-Nagata cover. The Nagata dimension of $X$ is the smallest
  $N$ so that $X$ has Nagata dimension at most $N$.
  \par A metric space $X$ has \textbf{small Nagata dimension} at most
  $N$ if for each $x\in X$ there is an $r>0$ (scale parameter) such that $B(x,r)$ has
  Nagata dimension at most $N$. Note that it might happen that the scale
  $r$ might be chosen uniformly, i.e.~independently of $x$. The small 
  Nagata dimension of $X$ is the smallest
  $N$ so that $X$ has small Nagata dimension at most $N$.
\end{defn}
%%%%%%%%%%%%%%%%%%%%%%%%%%%%%%%%%%%%%%%%%%%%%%%%%%%%%%%%%%%%%%%%%%%%%%%%%%
\def\pdec{_{\text{\normalfont p}}}
%%%%%%%%%%%%%%%%%%%%%%%%%%%%%%%%%%%%%%%%%%%%%%%%%%%%%%%%%%%%%%%%%%%%%%%%%%
\begin{thm}[Polyhedral approximation]
  \label{thm:poly_appx}
  $X$ has Nagata dimension at most $N$ if and only if there is a
  constant $C\pdec$ depending only on the parameters $(C,N)$ in the
  definition of Nagata dimension, such that for each $s>0$ there is an
  $N$-dimensional simplicial complex $P$, equipped with a metric $d_P$
  which restricts on each simplex to a metric induced by a Eucidean
  norm, and a $C\pdec$-Lipschitz map
  \begin{equation}
    \label{eq:poly_appx_s1}
    F:X\to P
  \end{equation}
  such that
  \begin{equation}
    \label{eq:poly_appx_s2}
    \|F^*d_P-d_X\|_\infty\le C\pdec s.
  \end{equation}
\end{thm}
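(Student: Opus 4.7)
The plan is to prove both directions: the forward implication via a nerve construction on a Nagata cover, and the backward direction by pulling a Nagata cover of $P$ back to $X$.

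For the forward direction, I would fix $s>0$, choose a small constant $\lambda=\lambda(C,N)$, and apply the hypothesis at scale $s'=\lambda s$ to obtain a sorted $(C,s',N)$-Nagata cover $\{\faset.\}_{i=0}^N$ of $X$ (assuming, as is standard and implicit, that the families cover $X$). In each set $A$ I fix a representative $x_A\in A$ and thicken to $A^+=\{x:d(x,A)<s'/4\}$, so within each family the thickenings remain $(s'/2)$-separated. Define the partition of unity
\[
\phi_A(x)=\frac{\max(0,\,s'/4-d(x,A))}{\sum_B\max(0,\,s'/4-d(x,B))}.
\]
Its denominator is $\ge s'/4$ and its numerators are $1$-Lipschitz, so each $\phi_A$ is $O(1/s')$-Lipschitz; the separation property forces at most one $\phi_A$ per family to be non-zero at any $x$, giving $\le N+1$ active weights. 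Let $P$ be the nerve with vertices $\{v_A\}$ and simplices $[v_{A_0},\dots,v_{A_k}]$ whenever $\bigcap_j A_j^+\neq\emptyset$; by the coloring, $\dim P\le N$. Metrize each simplex $\sigma=[v_{A_0},\dots,v_{A_k}]$ by a Euclidean metric obtained by placing the $v_{A_j}$ in $\real^{k}$ so as to realize, up to a perturbation of size $O(s')$, the distances $d_X(x_{A_i},x_{A_j})$; this yields a length metric $d_P$ on $P$. Finally, set $F(x)=\sum_A\phi_A(x)\,v_A$, a map taking values locally in a single simplex.

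The Lipschitz bound $\glip F.\le C\pdec$ is routine: on a simplex of diameter $O(s')$ the barycentric map is $O(1/s')\cdot O(s')=O(1)$-Lipschitz. The main obstacle is the additive approximation $\|F^*d_P-d_X\|_\infty\le C\pdec s$. For the upper bound I discretize a near-geodesic from $x$ to $y$ in $X$ by a chain $x=z_0,z_1,\dots,z_m=y$ with $\sum_i d_X(z_i,z_{i+1})\le d_X(x,y)+s'$ and $d_X(z_i,z_{i+1})\le s'$, assign to each $z_i$ a cover element $A_i\ni z_i$ (so $d_X(z_i,x_{A_i})\le Cs'$), observe that consecutive $A_i,A_{i+1}$ span an edge of $P$ of length $\le d_X(z_i,z_{i+1})+2Cs'$, and sum. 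For the lower bound I unfold a near-geodesic in $P$ from $F(x)$ to $F(y)$ vertex-by-vertex through the representatives $x_{A}$, losing only $O(s')$ at the two endpoints where $F(x)$ and $F(y)$ lie inside simplices of diameter $O(s')$. Choosing $\lambda$ sufficiently small absorbs all error terms into $C\pdec s$. The subtle point here is the additive (not multiplicative) nature of the distortion, which forces the vertex positions to be dictated by genuine $X$-representatives rather than an arbitrary Euclidean normalization.

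For the backward direction, I would use the classical fact that an $N$-dimensional simplicial complex with a piecewise-Euclidean metric restricting to a Euclidean norm on each simplex has Nagata dimension at most $N$, with constant depending only on $N$. Given such an $F$ at scale $s$, I take a $(C_0,t,N)$-Nagata cover $\{\mathcal{D}_i\}_{i=0}^N$ of $P$ at some $t>2C\pdec s$ and pull back: $\tilde{\mathcal{C}}_i=\{F^{-1}(A):A\in\mathcal{D}_i\}$. The additive distortion estimate gives that distinct members of $\tilde{\mathcal{C}}_i$ are at $X$-distance $\ge t-C\pdec s$, while each member has $X$-diameter $\le C_0 t+C\pdec s$; this is an $(C',\,t-C\pdec s,\,N)$-Nagata cover of $X$. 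Letting $s$ and $t$ vary produces Nagata covers of $X$ at every prescribed scale with constants depending only on $C_0,N,C\pdec$, so $X$ has Nagata dimension at most $N$. This direction is essentially bookkeeping once the Nagata-dimension bound for polyhedra is taken as known.
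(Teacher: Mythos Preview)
Your nerve construction and partition of unity are on the right track, but the distance--distortion argument has a genuine gap. For the upper bound $d_P(F(x),F(y))\le d_X(x,y)+C\pdec s$ you discretize a near-geodesic in $X$ into steps of size $\le s'$; this presupposes that $X$ is (approximately) a length space, which is nowhere assumed---Nagata dimension alone gives you no curves. Even granting length-space structure, your chain estimate accumulates error: with $m\sim d_X(x,y)/s'$ steps and an $O(s')$ loss per edge (from $d_X(x_{A_i},x_{A_{i+1}})\le d_X(z_i,z_{i+1})+2Cs'$), the total overshoot is $O(m\cdot s')=O(d_X(x,y))$, i.e.\ multiplicative rather than additive. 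The same accumulation problem threatens the unfolding argument for the lower bound. A further loose end is that metrizing each simplex separately to realize the pairwise distances $d_X(x_{A_i},x_{A_j})$ need not give consistent edge-lengths across shared faces, and need not even be Euclidean-realizable without perturbation.

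The paper circumvents all of this by a single device: embed the set of cover elements into $l^\infty$ via Hausdorff distance, set $F(x)=\sum_{S}\Phi_S(x)\,[S]$, and take $d_P$ to be the restriction of the ambient $l^\infty$ metric to the resulting nerve. Because the vertices are now placed \emph{globally and isometrically}, one gets $\bigl|\,\|F(x)-F(y)\|_{l^\infty}-d_X(x,y)\,\bigr|\le C(N,C)\,s$ directly from two triangle inequalities (each $F(x)$ lies within $O(s)$ of some vertex $[S_x]$, and $d_H(S_x,S_y)$ differs from $d_X(x,y)$ by $O(s)$), with no path discretization and no length-space hypothesis. If you want to salvage your approach, this global isometric placement of the representatives---Kuratowski on $\{x_A\}$ would work equally well---is the missing ingredient.
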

%%%%%%%%%%%%%%%%%%%%%%%%%%%%%%%%%%%%%%%%%%%%%%%%%%%%%%%%%%%%%%%%%%%%%%%%%%
\begin{proof}
  Sufficiency follows from \cite[Prop.~2.5]{lang_nagata}; we focus on
  necessity.
  \par\noindent\texttt{Step 1: Construction of $P$ and $F$.}
  \par Take a sorted $(C,s,N)$-Nagata cover
  $\{\faset.\}_{i=0,\cdots,N}$; to each $S_i\in\faset.$ associate a
  ($3s^{-1}$)-Lipschitz function:
  \begin{equation}
    \label{eq:poly_appx_p1}
    \Phi_{S_i}:X\to[0,1]
  \end{equation}
  such that:
  \begin{equation}
    \label{eq:poly_appx_p2}
    \Phi_{S_i} =
    \begin{cases}
      0 &\text{on $X\setminus B(S_i,s/3)$}\\
      1 &\text{on $S_i$.}
    \end{cases}
  \end{equation}
  By \textbf{(NSep)} for each $x\in X$ ane each $i$ there is at most one
  $S_i\in\faset.$ such that $\Phi_{S_i}(x)\ne0$ and so
  \begin{equation}
    \label{eq:poly_appx_p3}
    1\le\sum_{i=0}^N\sum_{S_i\in\faset.}\Phi_{S_i}\le N+1,
  \end{equation}
  and we can rescale the $\Phi_{S_i}$ by their sum to get a
  ($C(N)s^{-1}$)-Lipschitz partition of unity (still denoted by
  $\{\Phi_{S_i}\}$):
  \begin{equation}
    \sum_{i=0}^N\sum_{S_i\in\faset.}\Phi_{S_i}(x)=1.
  \end{equation}
  \par Let $\mathcal{S}$ denote the metric space whose points are the
  $\{S_i\in\faset.\}_{i=0,\cdots,N}$ and whose distance is the Hausdorff
  distance. We embedd $\mathcal{S}$ in $l^\infty$ and let $[S_i]$ denote
  the image of $S_i$. We define
  \begin{equation}
    \label{eq:poly_appx_p4}
    \begin{aligned}
      F:X&\to l^\infty\\
      x&\mapsto\sum_{i=0}^N\sum_{S_i\in\faset.}\Phi_{S_i}(x)[S_i],
    \end{aligned}
  \end{equation}
  and note that $F(X)\subset P$, where $P$ is the $N$-dimensional
  simplicial complex obtained by taking the convex hull of all finite
  tuples $([S_{i_0}],\cdots,[S_{i_k}])$ whenever
  \begin{equation}
    \label{eq:poly_appx_p5}
    \Phi_{S_{i_0}}\cdot\Phi_{S_{i_1}}\cdots\Phi_{S_{i_k}}\ne0.
  \end{equation}
  Note that the metric $d_P$ is the restriction of the ambient metric
  of $l^\infty$.
  \par\noindent\texttt{Step 2: Proof of~{\normalfont (\ref{eq:poly_appx_s2})}.}
  \par For $x\in X$ we let $L(x)=\{S_i: \Phi_{S_i}(x)\ne0\}$. If
  $S_x,S\in L(x)$ then:
  \begin{equation}
    \label{eq:poly_appx_p6}
    \dist(S, S_x)\le\frac{2}{3}s,
  \end{equation}
  and so we have the bound on the Hausdorff-distance:
  \begin{equation}
    \label{eq:poly_appx_p7}
    d_H(S,S_x)\le(\frac{2}{3}+C)s.
  \end{equation}
  If $S_x\in L(x)$, $S_y\in L(y)$ we then have:
  \begin{align}
    \label{eq:poly_appx_p8}
    \|F(x)-[S_x]\|_{l^\infty}&\le(\frac{2}{3}+C)s\\
    \label{eq:poly_appx_p9}
    \|F(y)-[S_y]\|_{l^\infty}&\le(\frac{2}{3}+C)s.
  \end{align}
  As $d(x,S_x)\le s/3$, $d(y, S_y)\le s/3$ we conclude that
  \begin{equation}
    \label{eq:poly_appx_p10}
    \left| d(x,y) - d_H(S_x,S_y)\right|\le 2(\frac{1}{3}+C)s,
  \end{equation}
  from which we get:
  \begin{equation}
    \label{eq:poly_appx_p11}
    \left|
      \|F(x)-F(y)\|_{l^\infty}-d(x,y)
    \right|\le (2+4C)s.
  \end{equation}
 \par\noindent\texttt{Step 3: Uniform Lipschitz bound on $F$.}
 \par If $L(x)\cap L(y)=\emptyset$ then $d(x,y)\gtrsim s$ and so a
 uniform bound on:
 \begin{equation}
   \label{eq:poly_appx_p12}
   \frac{\|F(x)-F(y)\|_{l^\infty}}{d(x,y)}
 \end{equation}
 follows from~(\ref{eq:poly_appx_s2}).
 \par Choose $W_{x,y}\in L(x)\cap L(y)$; then:
 \begin{equation}
   \label{eq:poly_appx_p13}
   \begin{split}
     F(x) - F(y) &= \sum_{S\in L(x)}\Phi_S(x)[S] - \sum_{T\in
       L(y)}\Phi_T(y)[T]\\
     &= \sum_{S\in L(x)}\Phi_S(x)[S]-\sum_{S\in L(x)}\Phi_S(x)[W_{x,y}]\\
     &\mskip\munsplit + \sum_{T\in L(y)}\Phi_T(y)[W_{x,y}] - \sum_{T\in
       L(y)}\Phi_T(y)[T]\\
     &=\sum_{S\in L(x)\setminus\{W_{x,y}\}}\Phi_S(x)([S]-[W_{x,y}]) -
     \sum_{T\in L(y)\setminus\{W_{x,y}\}}\Phi_T(y)([T]-[W_{x,y}])\\
     &=\sum_{S\in L(x)\setminus L(y)}(\Phi_S(x)-\Phi_S(y))([S]-[W_{x,y}])
     -\sum_{T\in L(y)\setminus L(x)}(\Phi_T(y)-\Phi_T(x))([T]-[W_{x,y}])\\
     &\mskip\munsplit +\sum_{W\in L(x)\cap L(y)}(\Phi_W(x)-\Phi_W(y))([W]-[W_{x,y}]).
   \end{split}
 \end{equation}
 Now if $Z\in L(x)\cup L(y)$ and $W\in L(x)\cap L(y)$ we have:
 \begin{equation}
   \label{eq:poly_appx_p14}
   \begin{aligned}
     \|[Z] - [W_{x,y}]\|_{l^\infty}&\le(\frac{2}{3}+C)s\\
     \|[W] - [W_{x,y}]\|_{l^\infty}&\le(\frac{2}{3}+C)s.
   \end{aligned}
 \end{equation}
 Then by~(\ref{eq:poly_appx_p13}):
 \begin{equation}
   \label{eq:poly_appx_p15}
   \begin{split}
     \|F(x)-F(y)\|_{l^\infty}&\le\frac{3}{s} d(x,y)\left(
       \#(L(x)\setminus L(y)) +  \#(L(y)\setminus L(x))
       +\#(L(x)\cap L(y))
     \right) (\frac{2}{3}+C)s\\
     &\le 6N(\frac{2}{3}+C)d(x,y).
   \end{split}
 \end{equation}
\end{proof}
%%%%%%%%%%%%%%%%%%%%%%%%%%%%%%%%%%%%%%%%%%%%%%%%%%%%%%%%%%%%%%%%%%%%%%%%%%
\begin{defn}[\textbf{(TAP)}($N$)]
  \label{defn:tower}
  A metric space has the property \textbf{tower of approximations by
    $N$-dimensional polyhedra} (abbr.~\textbf{(TAP)}($N$)) if there
  are constants $C$, $\{C_n\}$, $s_n\searrow0$, and $N$-dimensional
  polyhedral complexes $P_n$ (where the metric restricts on each
  simplex to a metric induced by a norm), and $C$-Lipschitz maps:
  \begin{equation}
    \label{eq:tower_1}
    F_n:X\to P_n
  \end{equation} such that:
  \begin{align}
    \label{eq:tower_2}
    \|F_n^*d_{P_n}-d_X\|_\infty&\le Cs_n\\
    \label{eq:tower_3}
    F_n^*d_{P_n}&\le C_nF_{n+1}^*d_{P_{n+1}}.
  \end{align}
Equivalently, (\ref{eq:tower_3}) can be reformulated by asking for
$C_n$-Lipschitz maps $\pi_n:P_{n+1}\to P_n$ which make the following
diagram commute:
\begin{equation}
  \label{eq:tower_4}
  \xy
(0,0)*+{X}="x";
(20,10)*+{P_n}="pn";
(20,-10)*+{P_{n+1}}="pnn";
{\ar "x"; "pn"}?*!/_4mm/{F_n};
{\ar "x"; "pnn"}?*!/^4mm/{F_{n+1}};
{\ar "pnn"; "pn"}?*!/^4mm/{\pi_n};
\endxy
\end{equation}
\end{defn}
%%%%%%%%%%%%%%%%%%%%%%%%%%%%%%%%%%%%%%%%%%%%%%%%%%%%%%%%%%%%%%%%%%%%%%%%%%
\begin{thm}[finite-dimensionality from \textbf{(TAP)}($N$)]
  \label{thm:tower}
  Let $X$ have property \textbf{(TAP)}($N$) and $\mu$ be a Radon
  measure on $X$. Then the analytic dimension of $(X,\mu)$ is at most $N$.
\end{thm}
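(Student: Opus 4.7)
I would argue by contradiction. Suppose derivations $D_0,\ldots,D_N\in\wder\mu\on U.$ are $L^\infty(\mu\on U)$-independent on some Borel $U\subset X$ with $\mu(U)>0$, and use the tower $\{F_n:X\to P_n\}$ supplied by \textbf{(TAP)}($N$) to construct a nontrivial $L^\infty$-linear dependence of the $D_i$ on a positive-measure subset of $U$.

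Two preliminary observations set the stage. First, for any Radon measure $\nu$ on an $N$-dimensional polyhedron $P$ the analytic dimension is at most $N$: decompose $P$ into its countably many open simplices, each bi-Lipschitz to an open subset of some $\real^d$ with $d\le N$, and combine Example~\ref{exa:poly_finite} with the fact that $L^\infty$-independence descends to positive-measure Borel subsets (extend coefficients by zero). Thus the push-forwards satisfy a nontrivial relation $\sum_i\alpha_{i,n}(F_n)_\# D_i=0$ in $\wder(F_n)_\#\mu.$, with $\alpha_{i,n}\in L^\infty((F_n)_\#\mu)$ normalized so $\max_i|\alpha_{i,n}|=1$ on its support. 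Second, the subalgebras $\mathcal A_n:=F_n^*\lipalg P_n.$ are increasing (via $F_n=\pi_n\circ F_{n+1}$) and their union $\mathcal A_\infty$ is weak$^*$-dense in $\lipalg X.$: the bound $\|F_n^*d_{P_n}-d_X\|_\infty\le Cs_n$ makes any Lipschitz $f$ nearly constant on the fibres of $F_n$, so one defines $g_n$ on $F_n(X)$ with $g_n\circ F_n\approx f$ up to error $Cs_n\glip f.$ and McShane-extends (after a Lipschitz regularization) to produce $g_n\circ F_n\to f$ weak$^*$ in $\lipalg X.$. Moreover the $\sigma$-algebras $\sigma(F_n)$ are increasing and $\bigvee_n\sigma(F_n)$ coincides $\mu$-a.e.\ with the Borel $\sigma$-algebra, since the $F_n$ asymptotically recover the metric on $X$.

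To pull back the dependence, partition $P_n$ into Borel pieces $B^{(n)}_{i^*}$ on which $|\alpha_{i^*,n}|$ is largest (breaking ties by smallest index); there $\alpha_{i^*,n}$ is invertible with $|\alpha_{i^*,n}^{-1}|=1$. Set
\[
\beta_{i,i^*,n}:=\chi_{F_n^{-1}(B^{(n)}_{i^*})}\cdot\bigl((\alpha_{i,n}/\alpha_{i^*,n})\circ F_n\bigr)\in L^\infty(\mu),
\]
so $\beta_{i^*,i^*,n}=\chi_{F_n^{-1}(B^{(n)}_{i^*})}$ and $|\beta_{i,i^*,n}|\le 1$. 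Definition~\ref{defn:push_pull} of the push-forward yields the conditional expectation identity $E[D_i(g\circ F_n)\mid\sigma(F_n)]=(F_n)_\# D_i(g)\circ F_n$ for $g\in\lipalg P_n.$; since the $\beta$'s are $\sigma(F_n)$-measurable, the dependence on $P_n$ pulls back to
\[
E_n\Bigl[\sum_i\beta_{i,i^*,n}D_i(f)\Bigr]=0\qquad\text{for every }f\in\mathcal A_n,
\]
with $E_n:=E[\,\cdot\mid\sigma(F_n)]$. Extract a weak$^*$-subsequential limit $\beta_{i,i^*,n_k}\to\beta_{i,i^*}$ in $L^\infty(\mu)$ via Banach--Alaoglu. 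For fixed $f\in\mathcal A_\infty$ eventually $f\in\mathcal A_{n_k}$ and $D_if$ is a fixed $L^\infty$ multiplier, so weak$^*$ convergence of the coefficients together with L\'evy's martingale convergence $E_{n_k}[\phi]\to\phi$ in $L^1(\mu)$ yields, by a standard strong-times-weak-star argument, $E_{n_k}[\sum_i\beta_{i,i^*,n_k}D_i(f)]\to\sum_i\beta_{i,i^*}D_i(f)$ weak$^*$. The left-hand side is $0$, so $\sum_i\beta_{i,i^*}D_i(f)=0$; extending from $\mathcal A_\infty$ to $\lipalg X.$ via weak$^*$ density, weak$^*$ continuity of each $D_i$, and the fixed-multiplier property of $\beta_{i,i^*}$, one obtains $\sum_i\beta_{i,i^*}D_i=0$ in $\wder\mu.$ for every $i^*$.

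The main obstacle is to rule out the trivial possibility that all limit coefficients vanish, since weak$^*$ limits of characteristic functions may collapse to zero. This is exactly the purpose of the dominant-coefficient partition: $\sum_{i^*}\chi_{F_n^{-1}(B^{(n)}_{i^*})}\ge\chi_U$ at every level $n$, hence $\sum_{i^*}\beta_{i^*,i^*}\ge\chi_U$ in the weak$^*$ limit. Consequently at $\mu$-a.e.\ $x\in U$ some $\beta_{i^*,i^*}(x)\ge 1/(N+1)$, and measurable selection produces an index $i^*$ and a Borel $U'\subset U$ of positive $\mu$-measure on which $\beta_{i^*,i^*}\ge 1/(N+1)$. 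Restricted to $U'$, the relation $\sum_i\beta_{i,i^*}D_i=0$ is a nontrivial $L^\infty(\mu\on U')$-dependence of $D_0,\ldots,D_N$ with nonzero coefficient on $D_{i^*}$, and by monotonicity of independence under restriction to positive-measure Borel subsets this contradicts the assumed $L^\infty(\mu\on U)$-independence of the $D_i$ on $U$.
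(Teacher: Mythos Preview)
Your argument is correct and takes a genuinely different route from the paper's. The paper works ``forward'': it fixes Lipschitz functions $g_1,\ldots,g_k$ witnessing the independence of $D_1,\ldots,D_k$ via $\det(D_ig_j)\ne 0$, approximates each $g_j$ in the weak* topology by functions factoring through some $F_n$, upgrades this to $L^2$-convergence of the matrix entries $D_ig_j$ by Mazur's Lemma, and then---using the tower to collapse the finite convex combination appearing in Mazur to a single level $n$---obtains $\tilde g_j:P_n\to\real$ with $\det(D_i(\tilde g_j\circ F_n))\ne 0$ on a set of positive measure, before transferring the conclusion to $P_n$ via disintegration and invoking Example~\ref{exa:poly_finite}. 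You work ``backward'': push the derivations forward to each $P_n$, use finite-dimensionality there to produce a normalized dependence relation at every level, pull the coefficients back to $X$, and pass to a weak* subsequential limit; the increasing filtration $\sigma(F_n)$ together with L\'evy's martingale theorem lets you carry the relation through conditional expectation, and the pigeonhole on the dominant-coefficient partition secures nontriviality in the limit. The paper's route is shorter and rests on a single soft tool (Mazur's Lemma); yours deploys more machinery (Banach--Alaoglu, martingale convergence) but makes the passage between $X$ and $P_n$ fully explicit through the identity $E[D_i(g\circ F_n)\mid\sigma(F_n)]=(F_n)_\#D_i(g)\circ F_n$, thereby sidestepping the paper's disintegration step and its assertion that $D_i(\tilde g_j\circ F_n)$ is constant along the fibres of $F_n$.
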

%%%%%%%%%%%%%%%%%%%%%%%%%%%%%%%%%%%%%%%%%%%%%%%%%%%%%%%%%%%%%%%%%%%%%%%%%%
\begin{proof}
  \par\noindent\texttt{Step 1: Weak* approximation by Lipschitz
    maps.}
  \par Let $f:X\to\real$ be $1$-Lipschitz. Then there is a function
  $\tilde f: P_n\to\real$ with Lipschitz constant $\glip\tilde f.\le
  L(C)$ such that:
  \begin{equation}
    \label{eq:tower_p1}
    \|f-\tilde f\circ F_n\|_\infty\le L(C)s_n.
  \end{equation}
  In fact, it suffices to select a maximal $4Cs_n$-separated net
  $\mathcal{S}$ in $X$, let $\tilde f(F_n(x))= f(x)$ for
  $x\in\mathcal{S}$ and extend $\tilde f$ by MacShane's Lemma.
  \par\noindent\texttt{Step 2: Mazur's Lemma}
\par Assume that $\mu$ is a finite Borel measure on $X$ and 
\begin{equation}
  \label{eq:tower_p2}
  \{D_1,\cdots, D_k\}\subset\wder\mu.
\end{equation}
are independent so that there are $1$-Lipschitz functions
\begin{equation}
  \label{eq:tower_p3}
  \{g_1,\cdots, g_k\}
\end{equation}
such that the matrix $(D_ig_j)$ has $\mu$-a.e.~rank $k$. By
\texttt{Step 1} each $g_j$ can be approximated in the weak* topology
by a sequence $g_j^{(n)}\circ F_n$ where $g_j^{(n)}: P_n\to\real$.
\par As $D_i(g_j^{(n)}\circ F_n)\xrightarrow{\text{w*}} D_ig_j$, using
Mazur's Lemma, we can find finite convex linear combinations
$\sum_{n}t^{(m)}_ng_j^{(n)}$ such that:
\begin{equation}
  \label{eq:tower_p4}
  D_i\left(
    \sum_nt^{(m)}_ng_j^{(n)}\circ F_n
  \right)\xrightarrow{L^2(\mu)}D_ig_j,
\end{equation}
where note that $t^{(m)}_n$ does not depend on $i$.
\par In particular, for $m$ sufficiently large we can find a set
$K\subset X$ of positive measure on which:
\begin{equation}
  \label{eq:tower_p5}
  \left|
    \det \left(
      D_i\left(
        \sum_nt^{(m)}_ng_j^{(n)}\circ F_n
      \right)
    \right)_{i,j}
  \right|>0;
\end{equation}
as the sum $\sum_nt^{(m)}_ng_j^{(n)}$ is finite, and as by
\textbf{(TAP)}($N$) any $\psi:P_n\to\real$ can be written as
$\psi=\tilde\psi\circ\pi_n$ where $\pi_n:P_{n+1}\to P_n$ is
$C_n$-Lipschitz, we can find $n$ and Lipschitz functions $\tilde
g_j:P_n\to\real$ such that:
\begin{equation}
  \label{eq:tower_p6}
  \left|
    \det (D_i(\tilde g_j\circ F_n))_{i,j}
  \right|>0,
\end{equation}
on a subset $\tilde K$ of positive measure.
\par If $\tilde\mu$ denotes the disintegration of $\mu\on \tilde K$
with respect to $F_{n\#}\mu\on\tilde K$, then for $F_{n\#}\mu\on\tilde
K$-a.e.~$p$ $D_i(g_j\circ F_n)$ is $\tilde\mu(p)$-a.e.~constant on
$F_n^{-1}(p)$. Thus the $d\tilde g_j$ are independent in $\wform
F_{n\#}\mu\on\tilde K.$; as $F_{n\#}\mu\on\tilde K$ is a Radon measure
supported in an $N$-dimensional simplicial complex, we must have $k\le
N$ by Example~\ref{exa:poly_finite}.
\end{proof}
%%%%%%%%%%%%%%%%%%%%%%%%%%%%%%%%%%%%%%%%%%%%%%%%%%%%%%%%%%%%%%%%%%%%%%%%%%
\section{Inverse Systems}
\label{sec:inv_sys}
In this Section we first discuss inverse systems of cube complexes
which yield Poincar\'e inequalities. Definition~\ref{defn:ads_sys} and
Theorem~\ref{thm:adsys_pi} are contained in
\cite[Sec.~11]{cheeger_inverse_poinc}; note that in general the
constant in the Poincar\'e inequality might depend both on the location of
the ball $B$ in the space and its radius $\brad(B)$, because the geometry at $\infty$ of such cube
complexes might be complicated. We then consider inverse systems
which satisfy a relaxed set of axioms, Definition \ref{defn:wad_sys},
which is suitable for constructing normal currents
(Defn.~\ref{defn:normal_sys}). The description of how to produce such
currents and the kind of ``calculus'' supported by such spaces is
discussed in Theorem~\ref{thm:limit_curr_wadsys}.
\par For a cube complex $X$ we will let $\ccell X,k.$ denote the set
of its $k$-dimensional cells.
%%%%%%%%%%%%%%%%%%%%%%%%%%%%%%%%%%%%%%%%%%%%%%%%%%%%%%%%%%%%%%%%%%%%%%%%%%
% @def
\def\gmdec{_\text{\normalfont geo}}
\def\gldec{_\text{\normalfont gall}}
\def\lkstar{\text{\normalfont St}}
%%%%%%%%%%%%%%%%%%%%%%%%%%%%%%%%%%%%%%%%%%%%%%%%%%%%%%%%%%%%%%%%%%%%%%%%%%
\begin{defn}[Admissible inverse systems / AIS]
  \label{defn:ads_sys}
  Let $(N,m)\in \natural\times(\natural\cap[2,\infty))$ and consider a
  collection of metric measure spaces $\{(X_i,\mu_i)\}_{i\in
    I}$ and maps $\{\pi_i\}_{i\in I}$ where the index set $I$ is
  of the form $\{k\in\zahlen: k\ge k_0\}$. This
  collection $\adsys$ is an \textbf{($N$-dimensional) admissible
    inverse system} if the following axioms hold.
  \begin{description}
  \item[(IBGeom)] Each $X_i$ is a nonempty connected cube-complex (with
    the length metric) which is a union of its $N$-dimensional cells
    which are isometric to the Euclidean cube $[0,m^{-i}]^N$;
    moreover, there is a uniform bound $C\gmdec$ on the cardinality of
    each link.
  \end{description}
    Let $X_i^{(1)}$ denote the cube-complex obtained from
    $X_i$ by subdividing each $N$-cube of $X_i$ into $m^N$ isometric
    subcubes; when the subdivision operation is repeated  $k$-times we
    use the notation $X_i^{(k)}$.
\begin{description}
\item[(IOpen)] Each map $\pi_i:X_{i+1}\to X_i^{(1)}$ is open, surjective, cellular, and
    restricts to an isometry on every face.
  \end{description}
A \textbf{gallery} in $X_i^{(k)}$ is a finite sequence of
    $N$-dimensional cells
    \begin{equation}
        \label{eq:ad_sys_0}
      \{\sigma_1,\cdots,\sigma_l\}\subset\ccell
      X_i^{(k)},N.
    \end{equation}
    such that each
    $\sigma_k$ and $\sigma_{k+1}$ share an $(N-1)$-dimensional face.
    If $y\in\sigma_1$ and $y'\in\sigma_l$ we say that the gallery
    connects $y$ to $y'$.
\begin{description}
\item[(IGall)] 
     Any two points in $X_i$ are connected by a gallery.
     For each $x\in X_i^{(1)}$, and for each
    $y,y'\in\pi_i^{-1}(x)$
    there is a gallery (in $X_{i+1}$) of at most $C\gldec$-cells
    joining $y$ to to $y'$;
  \item[(IMeas)] Each $\mu_i$ restricts to a constant multiple (with
    weight $\weight(\mu_i,\sigma)$) of
    Lebesgue measure on each element $\sigma$ of $\ccell X_i,.$ and
    $\pi_{i\#}\mu_{i+1}=\mu_i$. Moreover, there is a uniform constant
    (in $i$)
    $C_\mu$ such that whenever $\sigma,\sigma'\in\ccell X_i,.$ are adjacent:
    \begin{equation}
      \label{eq:ad_sys_1}
      \mu_i(\sigma')\le C_\mu\mu_i(\sigma);
    \end{equation}
  \item[(IPoinc)] Let $f_i\in\ccell X_i^{(1)},N-1.$ and
    $f_{i+1}\in\pi_i^{-1}(f_i)$; then
    the quantity:
    \begin{equation}
      \label{eq:ad_sys_2}
      \sum_{\tau\in\bdcell f_{i+1},.\in\pi_i^{-1}(\sigma_i)}
      \frac{
        \weight(\mu_{i+1},\tau)
      }
      {
        \weight(\mu_i,\sigma_i)
      }
    \end{equation}
    is constant as $\sigma_i$ varies on the set $\bdcell f_{i},.$ of
    $N$-cells of $X_i^{(1)}$ which bound $f_i$.
  \end{description}
\end{defn}
%%%%%%%%%%%%%%%%%%%%%%%%%%%%%%%%%%%%%%%%%%%%%%%%%%%%%%%%%%%%%%%%%%%%%%%%%%
% @def
\def\pidec{_\text{\normalfont PI}}
\def\mddec{_\text{\normalfont d}}
%%%%%%%%%%%%%%%%%%%%%%%%%%%%%%%%%%%%%%%%%%%%%%%%%%%%%%%%%%%%%%%%%%%%%%%%%%
For a discussion of the Poincar\'e inequality we refer the reader
to\cite{heinonen98,keith-modulus}.
%%%%%%%%%%%%%%%%%%%%%%%%%%%%%%%%%%%%%%%%%%%%%%%%%%%%%%%%%%%%%%%%%%%%%%%%%%
\begin{defn}[Local PI-space]
  \label{defn:loc_pi}
  A geodesic metric measure space $(X,\mu)$ is a \textbf{local $(1,p)$-PI
  space} if $\mu$ is locally doubling, i.e.~for each ball $B\subset X$
there is a constant $C\mddec(B)$ such that $\mu\on B$ is
$C\mddec(B)$-doubling as a measure on the metric space $B$, and if
there is a constant $C\pidec(B)$ such
  that for each ball $B'\subset B$ and each 
  Lipschitz function $f:X\to\real$ one has:
  \begin{equation}
    \label{eq:loc_pi_1}
    \av_{B'}|f-f_{B'}|\,d\mu \le C\pidec(B)\brad(B')\left(
      \av_{B'}(\biglip f)^p\,d\mu
      \right)^{1/p},
    \end{equation}
where
\begin{equation}
  \label{eq:loc_pi_2}
  \biglip f(x) = \limsup_{y\to x, y\ne x}\frac{|f(y)-f(x)|}{d(x,y)};
\end{equation}
  if $C\pidec(B)$ and $C\mddec(B)$ can be chosen independent of $B$ then $(X,\mu)$ is a
  called a $(1,p)$-PI space.
\end{defn}
%%%%%%%%%%%%%%%%%%%%%%%%%%%%%%%%%%%%%%%%%%%%%%%%%%%%%%%%%%%%%%%%%%%%%%%%%%
This Theorem summarizes properties of AISs.
%%%%%%%%%%%%%%%%%%%%%%%%%%%%%%%%%%%%%%%%%%%%%%%%%%%%%%%%%%%%%%%%%%%%%%%%%%
\begin{thm}[Inverse limits are local $(1,1)$-PI]
  \label{thm:adsys_pi}
  Let $\adsys$ be an admissible inverse system and let
  $\{p_i\}_{i\in I}\subset\prod_{i\in I}X_i$ be a compatible collection
  of basepoints, i.e.~$\pi_i(p_{i+1})=p_i$ $\forall i$. Then the
  following limit (in the pointed measured Gromov-Hausdorff sense) exists
  \begin{equation}
    \label{eq:adsys_pi_s1}
    \begin{aligned}
      \lim_{k\to\infty}(X_k,\mu_k,p_k)&=(X_\infty,\mu_\infty,p_\infty)\\
    \end{aligned}
  \end{equation}
  and is called the \normalfont{\textbf{inverse}} limit of $\adsys$ (given the choice of
  basepoints). Then the inverse limit is a local $(1,1)$-PI space
  where $C\pidec(B)$ and $C\mddec(B)$ depend, besides $B$, only on $C\gmdec$, $m$,
  $C\gldec$ and $C_\mu$. If some
  $(X_k,\mu_k,p_k)$ is a $(1,1)$-PI space, so is the inverse limit.
\end{thm}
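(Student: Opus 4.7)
The plan is to follow the strategy of~\cite{cheeger_inverse_poinc}, decomposing the statement into (a) existence of the pmGH limit, (b) local doubling of $\mu_\infty$, (c) the local $(1,1)$-Poincar\'e inequality, and (d) the uniform-PI case.

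For (a), I would use that $\pi_i:X_{i+1}\to X_i^{(1)}\hookrightarrow X_i$ is cellular and restricts to isometries on faces, hence $1$-Lipschitz on each cell, while its point-preimages have diameter at most $C\gldec\cdot m^{-(i+1)}$ by (IGall). Thus each $\pi_i$ is an $O(m^{-i})$-approximation in the Gromov-Hausdorff sense on every ball of fixed radius, so $(X_k,p_k)$ is Cauchy in the pointed GH topology. The compatibility $\pi_{i\#}\mu_{i+1}=\mu_i$ of (IMeas) upgrades this to pmGH convergence with a well-defined Radon limit $\mu_\infty$. For (b), at level $i$ a ball of radius $r\lesssim\brad(B)$ meets at most $C(C\gmdec,N)(r/m^{-i})^N$ many $N$-cells, and (IMeas) bounds the ratio of weights of any two of them by $C_\mu^{O(\brad(B)/m^{-i})}$, yielding local doubling at each finite level with a constant depending on $B$ and on $(C\gmdec,m,C_\mu)$; this inherits to $\mu_\infty$.

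The main obstacle is (c). I would first establish a discrete $(1,1)$-PI on each $(X_i,\mu_i)$ by a chaining argument. Given two $N$-cells $\sigma,\sigma'\subset B'$, connect them by a gallery $\{\sigma=\sigma_1,\dots,\sigma_\ell=\sigma'\}$ of length $\ell\lesssim\brad(B')/m^{-i}$, which exists by the first clause of (IGall), and telescope $|f_{\sigma}-f_{\sigma'}|$ as a sum of differences of consecutive cell-averages. Each such difference is estimated by $\int\biglip f\,d\mu_i$ on a controlled neighborhood of the shared face. The critical ingredient is that when one sums these contributions over all pairs $(\sigma,\sigma')$ in $B'$ weighted by $\mu_i(\sigma)\mu_i(\sigma')$, the resulting measure is comparable to $\mu_i\on B'$ itself: this is exactly what (IPoinc) buys us, since the balance condition on face-weights guarantees that the flux through faces is preserved under the projection $\pi_i$, giving constants independent of $i$. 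Averaging yields the discrete PI with constants depending only on $(C\gmdec,m,C\gldec,C_\mu)$ and on $B$ through the range of weight ratios $\mu_i(\sigma)/\mu_i(\sigma')$ for cells $\sigma,\sigma'$ near $B$.

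To transfer to $(X_\infty,\mu_\infty)$, I would approximate a Lipschitz $f:X_\infty\to\real$ by its pullbacks from $X_i$ via the almost-isometric projection, apply the discrete PI at each level, and pass to the limit using pmGH convergence of $(X_i,\mu_i)$ and the lower semicontinuity of $\biglip f$. The hard part is keeping the constants under control through the limit: one needs to verify that the weight-ratio variation over $B$ and the gallery length needed to connect pairs in $B$ are both controlled by $\brad(B)$, the location of $B$, and the five parameters. Finally (d) is immediate: if the discrete PI at each finite level holds with constants independent of position, the chaining argument runs with those same constants and the limit inherits a global $(1,1)$-PI.
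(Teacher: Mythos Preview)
Your proposal takes a different route from the paper. The paper's argument is very short: for (a) it derives Cauchyness of $\{(X_k,\mu_k,p_k)\}$ from the estimate $\|\pi_i^*d_{X_{i+1}}-d_{X_i}\|\le Cm^{-i}$, essentially as you do, but for (b)--(d) it does \emph{not} prove a discrete PI at each level and pass to a limit. Instead it fixes an arbitrary level $k_0$, projects the ball $B\subset X_\infty$ into a finite gallery-connected subcomplex $S_B\subset X_{k_0}$, observes that $S_B$ is a $(1,1)$-PI space by a pencil-of-curves modulus estimate (citing~\cite{keith-modulus}), and then invokes~\cite[Sec.~11]{cheeger_inverse_poinc} as a black box to conclude that $(\pi_{\infty,k_0}^{-1}(S_B),\mu_\infty)$ is a $(1,1)$-PI space with constants controlled by $(C_{S_B},C\mddec(S_B),m,C\gldec,C_\mu,C\gmdec)$. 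The same-ball form then comes from the chain condition via~\cite{haj_kosk_sobolev_met}, and (d) follows since the constants $C_{S_B},C\mddec(S_B)$ become uniform when $(X_{k_0},\mu_{k_0})$ is itself PI. So the paper outsources essentially all the work, while you are effectively re-deriving~\cite[Sec.~11]{cheeger_inverse_poinc}.

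Your strategy can be made to work, but your account of where \textbf{(IPoinc)} enters is wrong. At any \emph{fixed} level $i$ the discrete PI for $(X_i,\mu_i)$ restricted to a bounded region follows already from \textbf{(IBGeom)}, \textbf{(IGall)} and \textbf{(IMeas)}; the axiom \textbf{(IPoinc)} is a condition comparing levels $i$ and $i+1$ and plays no role in a single-level chaining argument. Its actual role is in the \emph{inductive step}: it is precisely the balance condition ensuring that the PI constant of $(X_{i+1},\mu_{i+1})$ is controlled by that of $(X_i,\mu_i)$ with a defect that sums geometrically in $i$. Without isolating this step you would obtain a PI at each level but with constants that might blow up as $i\to\infty$, which is the whole difficulty. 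Your limit passage also needs tightening: functions on $X_\infty$ do not ``pull back from $X_i$''; you must approximate $f$ by $g_i\circ\pi_{\infty,i}$ with $g_i:X_i\to\real$ Lipschitz and then argue that $\biglip f$ is controlled by the corresponding quantities at finite level, which requires an additional argument beyond lower semicontinuity.
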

%%%%%%%%%%%%%%%%%%%%%%%%%%%%%%%%%%%%%%%%%%%%%%%%%%%%%%%%%%%%%%%%%%%%%%%%%%
\begin{proof}
  \par\noindent\texttt{Step 1: Existence of the inverse limit.}
  \par By \textbf{(IBGeom)}, \textbf{(IOpen)} and \textbf{(IGall)}
  there is a constant $C=C(C\gmdec,C\gldec,m)$ such that:
  \begin{equation}
    \label{eq:adsys_pi_p1}
    \|\pi_i^*d_{X_{i+1}}-d_{X_i}\|\le Cm^{-i}.
  \end{equation}
  By \textbf{(IMeas)} for each $R>0$ there is a constant
  $C=C(R,m,C_\mu)$ such that, if $f:X_i\to[0,1]$ is $1$-Lipschitz with
  $\spt f\subset B(p_i,R)$ then:
  \begin{equation}
    \label{eq:adsys_pi_p2}
    \left|
      \int_{X_i}f\,d\mu_i - \int_{X_i}f\,d(\pi_{i,\#}\mu_{i+1})
    \right| \le Cm^{-i}.
  \end{equation}
  Choosing an appropriate metric to metrize the mGH-topology we conclude
  that the sequence $\{(X_k,\mu_k,p_k)\}_{k\ge\inf I}$ is Cauchy.
  \par\noindent\texttt{Step 2: The Poincar\'e inequality.}
  \par Let $k_0\in I$ and $B\subset X_\infty$ be a ball. As the induced
  map:
  \begin{equation}
    \label{eq:adsys_pi_p3}
    \pi_{\infty,k_0}: X_\infty \to
    X_{k_0}\quad(\text{$\lim_{i\to\infty}$ of $\pi_{i}\circ\pi_{i-1}\circ\cdots\circ\pi_{k_0}$})
  \end{equation}
  is $1$-Lipschitz, there is a finite subcomplex $S_B\subset X_{k_0}$
  whose interior contains $\pi_{\infty,k_0}(B)$; without loss of
  generality we may assume $S_B$ gallery-connected; then $S_B$ is a
  $(1,1)$-PI space since pairs of points in $S_B$ can be joined by
  pencils of curves that satisfy an appropriate modulus estimate
  \cite{keith-modulus}.
  \par The argument in \cite[Sec.~11]{cheeger_inverse_poinc} shows that, if
  $(C_{S_B}, C\mddec(S_B))$ are the constants in the Poincar\'e
  inequality and the doubling condition for
  $(S_B,\mu_{k_0}\on S_B)$, then $(\pi^{-1}_{\infty,k_0}(S_B),
  \mu_\infty\on \pi^{-1}_{\infty,k_0}(S_B))$ is a $(1,1)$-PI space
  with constant in the Poincar\'e inequality:
  \begin{equation}
    \label{eq:adsys_pi_p4}
    C=C(C_{S_B},C\mddec(S_B),m,C\gldec,C_\mu,C\gmdec).
  \end{equation}
  The fact that in~(\ref{eq:loc_pi_1}) one can take the same ball on
  both sides follows from \cite{haj_kosk_sobolev_met} because by
  \textbf{(IGall)} the interior of $S_B$ satisfies an appropriate \emph{chain
  condition}, which passes to $\pi^{-1}_{\infty,k_0}(S_B)$ because of
  \textbf{(IOpen)}.
  \par Finally note that if $(X_{k_0},\mu_{k_0})$ is a $(1,1)$-PI space
  the constants $C_{S_B}$, $C\mddec(S_B)$ can be assumed independent of $S_B$ and so
  $(X_\infty,\mu_\infty)$ is a $(1,1)$-PI space.
\end{proof}
%%%%%%%%%%%%%%%%%%%%%%%%%%%%%%%%%%%%%%%%%%%%%%%%%%%%%%%%%%%%%%%%%%%%%%%%%%
The following definition introduces the kind of systems that we use to
build normal currents. 
%%%%%%%%%%%%%%%%%%%%%%%%%%%%%%%%%%%%%%%%%%%%%%%%%%%%%%%%%%%%%%%%%%%%%%%%%%
\begin{defn}[Weak Admissible Inverse Systems / WAIS]
  \label{defn:wad_sys}
  Let $\adsys$ satisfy \textbf{(IBGeom)}--\textbf{(IMeas)}. We say
  that $\adsys$ is a \textbf{weak admissible inverse system} if the
  following axioms hold:
  \begin{description}
  \item[(IOr)] Each $\sigma\in\ccell X_i,.$ carries an orientation,
    and these orientations induce compatible orientations on the cells
    in $\ccell X_i^{(k)},.$ for each $k\ge 1$. These orientations are
    compatible in the sense that if $\sigma\in\ccell X_{i+1},.$ then
    the orientation of $\pi_i(\sigma)$ is induced by $\pi_i$;
  \item[(IFlux)] There is a $k_0\in I$ such that for $i\ge k_0$ the
    following holds. Fix $f_i\in\ccell ,N-1.$ in the interior of some
    cell in $\ccell X_i,.$ and partition $\bdcell f_i,.$ in two
    subsets $\bdcell ,+.$ and $\bdcell ,-.$ depending on which
    orientation they induce on $f_i$. Then for each $f_i\in\ccell
    ,N-1.$ and each $f_{i+1}\in\pi_i^{-1}(f_i)$ the following holds:
    \begin{equation}
      \label{eq:wad_sys_1}
      \sum_{\tau\in \pi_i^{-1}(\bdcell ,+.)\cap\bdcell f_{i+1},X_i^{(1)}.}
        \weight(\mu_{i+1},\tau) =
      \sum_{\tau\in \pi_i^{-1}(\bdcell ,-.)\cap\bdcell f_{i+1},X_i^{(1)}.}
        \weight(\mu_{i+1},\tau).
    \end{equation}
  \end{description}
\end{defn}
In general \textbf{(IFlux)} is weaker than \textbf{(IPoinc)}.
%%%%%%%%%%%%%%%%%%%%%%%%%%%%%%%%%%%%%%%%%%%%%%%%%%%%%%%%%%%%%%%%%%%%%%%%%%
% @def
\def\mkcurr#1.{[\mskip-3mu[#1]\mskip-3mu]}
%%%%%%%%%%%%%%%%%%%%%%%%%%%%%%%%%%%%%%%%%%%%%%%%%%%%%%%%%%%%%%%%%%%%%%%%%%
\begin{defn}[Normal currents associated to a WAIS]
  \label{defn:normal_sys}
  Let $\adsys$ be a WAIS. We can canonically identify each
  $\sigma\in\ccell X_i,.$ with $[0,m^{-i}]^N$ and associate to it a
  (classical) $N$-normal current $\mkcurr\sigma.$ by:
  \begin{equation}
    \label{eq:normal_sys_1}
    \mkcurr\sigma. =
    \pm \partial_1\wedge\cdots\wedge\partial_N\,\lebmeas N.\on\sigma,
  \end{equation}
  where the choice of $\pm$ depends on the choice of orientation on
  $\sigma$. To each $X_i$ we can associate a (metric) normal current $N_i$
  (where $\cmass N_i.$ and $\cmass\partial N_i.$ are locally finite)
  by:
  \begin{equation}
    \label{eq:normal_sys_2}
    N_i=\sum_{\sigma\in\ccell X_i,.}\weight(\mu_i,\sigma)\mkcurr\sigma.;
  \end{equation}
  \textbf{(IOr)} guarantees that:
  \begin{equation}
    \label{eq:normal_sys_3}
    \pi_{i\#}N_{i+1}=N_i.
  \end{equation}
\end{defn}
%%%%%%%%%%%%%%%%%%%%%%%%%%%%%%%%%%%%%%%%%%%%%%%%%%%%%%%%%%%%%%%%%%%%%%%%%%
\begin{thm}[Limit normal currents for WAIS]
  \label{thm:limit_curr_wadsys}
  Let $\adsys$ be a WAIS and let
  $\{p_i\}_{i\in I}\subset\prod_{i\in I}X_i$ be a compatible collection
  of basepoints. Then:
  \begin{description}
  \item[(mGH)] The following limit exists as in
    Theorem~\ref{thm:adsys_pi}:
    \begin{equation}
      \label{eq:limit_curr_wadsys_s1}
      \begin{aligned}
        \lim_{k\to\infty}(X_k,\mu_k,p_k)&=(X_\infty,\mu_\infty,p_\infty);
      \end{aligned}
    \end{equation}
  \item[(Nag)] For $i\in I\cup\{\infty\}$ the metric
    space $X_i$ has small Nagata dimension $N$ with uniform
    parameters (in $i$: scale and cover-separation). If some $X_{k_0}$ has Nagata dimension $N$, so do all
    the $X_i$ with uniform cover-separation parameter;
  \item[(Wea)] For $i\in I\cup\{\infty\}$ the
    module $\wder\mu_i.$ is free on $N$-generators
    $\{D_{i,\alpha}\}_{\alpha\in\{1,\cdots,N\}}$. If $i\in \zahlen$
    then for each $\sigma\in\ccell X_i,.$:
    \begin{equation}
      \label{eq:limit_curr_wadsys_s2}
      \chi_{\sigma}D_{i,\alpha}=\partial_\alpha,
    \end{equation}
    $\partial_\alpha$ being the ``Euclidean derivation'' in the
    $\alpha$-direction in the cell $\sigma$. The derivations are
    compatible in the sense that whenever $i\ge j$:
    \begin{equation}
      \label{eq:limit_curr_wadsys_s3}
      \pi_{i,j\#}D_{i,\alpha}=D_{j,\alpha};
    \end{equation}
  \item[(Nor)] Assume that convergence
    in~(\ref{eq:limit_curr_wadsys_s1}) takes place (as weak*
    convergence of measures and standard Hausdorff-Vietoris
    convergence) in some complete separable metric space $Z$ and that
    there is a constant $C_Z$ such that if $i,j\in I\cup\{\infty\}$ with $i\ge
    j$ one has:
    \begin{equation}
      \label{eq:limit_curr_wadsys_s3bis}
      \sup_{x_i\in X_i} d_Z(x_i,\pi_{i,j}(x_i))\le C_Zm^{-j}.
    \end{equation}
    Then the
    following limit of currents exists in the weak topology for normal
    currents:
    \begin{equation}
      \label{eq:limit_curr_wadsys_s4}
      \begin{aligned}
        \lim_{k\to\infty} N_k&=N_\infty,
      \end{aligned}
    \end{equation}
    and 
    \begin{equation}
      \label{eq:limit_curr_wadsys_s5}
      \pi_{i,j\#}N_i=N_j;
    \end{equation}
    furthermore, each current $N_i$ is \normalfont{\textbf{simple}} in the sense
    that:
    \begin{equation}
      \label{eq:limit_curr_wadsys_s6}
      N_i = D_{i,1}\wedge\cdots\wedge D_{i,N}\,\mu_i.
    \end{equation}
    Finally, the convergence in~(\ref{eq:limit_curr_wadsys_s4}) does
    not entail loss of mass: i.e.~for each open $U\Subset X_j$:
    \begin{equation}
      \label{eq:limit_curr_wadsys_s7}
      \begin{aligned}
        \cmass N_i.(\pi_{i,j}^{-1}(U))&=\cmass N_j.(U)\\
        \cmass \partial N_i.(\pi_{i,j}^{-1}(U))&=\cmass \partial N_j.(U).
      \end{aligned}
    \end{equation}
  \end{description}
\end{thm}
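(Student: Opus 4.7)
The plan is to establish \textbf{(mGH)}, \textbf{(Nag)}, \textbf{(Wea)}, \textbf{(Nor)} in order, each building on the preceding. First, \textbf{(mGH)} goes through by replaying Step~1 of the proof of Theorem~\ref{thm:adsys_pi}: the estimates $\|\pi_i^*d_{X_{i+1}}-d_{X_i}\|_\infty\le Cm^{-i}$ and the analogous measure estimate use only axioms \textbf{(IBGeom)}--\textbf{(IMeas)}, which a WAIS inherits by definition. For \textbf{(Nag)}, each $X_i$ is on balls of bounded diameter a finite cube complex with cells isometric to $[0,m^{-i}]^N$ and link cardinality bounded by $C\gmdec$; gluing standard Nagata covers of Euclidean cubes through a partition-of-unity argument yields $(C,s,N)$-Nagata covers at all scales $s>0$ with $C=C(N,C\gmdec)$, establishing uniform small Nagata dimension at most $N$, and the lower bound $\ge N$ follows from the presence of Euclidean $N$-subcubes. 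For $X_\infty$, Nagata covers are pulled back through $\pi_{\infty,i}$ at scales $s\gg m^{-i}$, and the distortion estimate above ensures the pullback remains a valid Nagata cover of the same order. If some $X_{k_0}$ has global Nagata dimension $N$, its cover-separation constant propagates uniformly to all $X_i$ via the same distortion.

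For \textbf{(Wea)}, on each $X_i$ I would set $\chi_\sigma D_{i,\alpha}=\partial_\alpha$ on $\sigma\in\ccell X_i,N.$ using the canonical orientations from \textbf{(IOr)}; cellwise verification of the Leibniz rule and weak$^*$-continuity, together with compatibility of orientations on shared $(N-1)$-faces, shows $D_{i,\alpha}\in\wder\mu_i.$ and that $\{D_{i,\alpha}\}_\alpha$ freely generates $\wder\mu_i.$. For $X_\infty$, I would construct $D_{\infty,\alpha}$ via an Alberti representation: since \textbf{(IOpen)} ensures $\pi_i$ restricts to an isometry on every face, the coordinate foliation of each $N$-cell in direction $\alpha$ lifts compatibly through $\pi_i$ to an inverse system of $1$-biLipschitz curves in $X_\infty$; integrating~(\ref{eq:alberti_to_derivation_1}) against the natural transverse measure with weight $1$ yields $D_{\infty,\alpha}$. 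The compatibility~(\ref{eq:limit_curr_wadsys_s3}) follows from the construction and \textbf{(IMeas)}, while freeness and rank $N$ transfer from finite levels.

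The crux is \textbf{(Nor)}. Iterating~(\ref{eq:normal_sys_3}) yields $\pi_{i,j\#}N_i=N_j$ for all $i\ge j$. Since $\cmass N_i.=\mu_i$ on $N$-cells, mass on any fixed compact in $Z$ is uniformly bounded by \textbf{(mGH)}. The decisive step is a uniform bound on $\cmass\partial N_i.$ on compacts. Here \textbf{(IFlux)} enters: I decompose each face $f_{i+1}\in\ccell X_{i+1},N-1.$ according to whether $\pi_i(f_{i+1})$ lies in the interior of an $N$-cell of $X_i$ or in the $(N-1)$-skeleton of $X_i^{(1)}$. For faces of the first type, identity~(\ref{eq:wad_sys_1}) forces the signed weighted boundary contribution to vanish; for the second type, the contribution aggregates to a cellular refinement of $\partial N_i$, controlled by \textbf{(IMeas)} and~(\ref{eq:ad_sys_1}). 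Induction on $i$ then gives $\cmass\partial N_i.(B)\le C(B)$ uniformly. Together with the transport bound~(\ref{eq:limit_curr_wadsys_s3bis}), this yields tightness of $\{N_i\}$ and $\{\partial N_i\}$ in $Z$, hence a weak limit $N_\infty$. Simplicity~(\ref{eq:limit_curr_wadsys_s6}) follows by matching the cellwise formula~(\ref{eq:normal_sys_1}) with the Alberti representation of $D_{i,1}\wedge\cdots\wedge D_{i,N}$ and passing to the limit; the no-mass-loss identity~(\ref{eq:limit_curr_wadsys_s7}) is immediate from exact pushforward compatibility and the fact that $\cmass N_k.$ and $\cmass\partial N_k.$ agree with the underlying cellular measures.

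The hard part will be the boundary cancellation argument. \textbf{(IFlux)} provides cancellation only for faces genuinely created by subdivision, so careful book-keeping of orientations via \textbf{(IOr)} and of weight ratios via \textbf{(IMeas)} is required to show that the ``new'' contribution to $\partial N_{i+1}$ vanishes identically while the ``inherited'' one equals a refinement of $\partial N_i$. A secondary delicate point is identifying the weak limit with the simple current $D_{\infty,1}\wedge\cdots\wedge D_{\infty,N}\,\mu_\infty$: this requires matching the action $N_i(f_0,\ldots,f_N)$ on Lipschitz tuples with the integral expression after weak$^*$ approximation of the $f_j$ through Theorem~\ref{thm:poly_appx}, combined with the cellwise computation on finite levels and the compatibility~(\ref{eq:limit_curr_wadsys_s3}).
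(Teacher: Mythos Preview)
Your outline tracks the paper's structure for \textbf{(mGH)}, \textbf{(Nag)}, and the boundary cancellation in \textbf{(Nor)}, but there is a real gap in \textbf{(Wea)} that propagates into the identification of $N_\infty$.

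The issue is your construction of $D_{\infty,\alpha}$. You assert that ``the coordinate foliation of each $N$-cell in direction $\alpha$ lifts compatibly through $\pi_i$'' and then integrate against ``the natural transverse measure with weight $1$''. But the lifts are not unique: when an $x_\alpha$-segment in $X_i^{(1)}$ crosses an $(N-1)$-face $f_i$, its preimage under $\pi_i$ may branch into several cells of $\bdcell f_{i+1},.$, and you must choose transition weights. For the resulting curve measure $P_\infty$ to satisfy $\mu_\infty=\int\hmeas._\gamma\,dP_\infty(\gamma)$ (which you need in order that $D_{\infty,\alpha}x_\beta=\delta_{\alpha\beta}$ holds $\mu_\infty$-a.e., not merely on a smaller set), the weights must be chosen so that the gallery measure reproduces $\mu_{i+1}$ at every stage. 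This is precisely where \textbf{(IFlux)} enters, and it is not a book-keeping triviality: the paper spends Steps~2--4 setting up maximal $x_\alpha$-galleries, defining $Q_{i+1}$ via the recursive product~(\ref{eq:limit_currs_wadsys_p6}), and proving by an induction that repeatedly invokes \textbf{(IFlux)} that~(\ref{eq:limit_currs_wadsys_p3}) holds. Your proposal uses \textbf{(IFlux)} only for boundary cancellation in \textbf{(Nor)}; you are missing its role here. Relatedly, ``freeness and rank $N$ transfer from finite levels'' is not automatic: the paper obtains the upper bound on the index of $\wder\mu_\infty.$ from \textbf{(TAP)}$(N)$ via Theorem~\ref{thm:tower}, which you do not invoke.

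A second, smaller gap is the simplicity~(\ref{eq:limit_curr_wadsys_s6}) for $i=\infty$. You describe this as ``passing to the limit'', but weak convergence $N_k\to N_\infty$ does not by itself pin down $\vec N_\infty$. The paper argues indirectly: from~(\ref{eq:limit_curr_wadsys_s7}) one gets $\cmass N_\infty.\le\mu_\infty$, then the representation theorem of \cite{curr_alb} yields $N_\infty=\lambda\,D_{\infty,1}\wedge\cdots\wedge D_{\infty,N}\,\mu_\infty$ with $|\lambda|\le 1$, and finally a slicing/disintegration argument using~(\ref{eq:limit_curr_wadsys_s5}) on each cell forces $\lambda=1$. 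Your sketch does not supply an argument at this level.
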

%%%%%%%%%%%%%%%%%%%%%%%%%%%%%%%%%%%%%%%%%%%%%%%%%%%%%%%%%%%%%%%%%%%%%%%%%%
\begin{rem}[Assumption~(\ref{eq:limit_curr_wadsys_s3bis})]
  \label{rem:pi_uf_assmp}
  Note that assumption~(\ref{eq:limit_curr_wadsys_s3bis}) is not
  restrictive as by induction and passing to
  the limit in~(\ref{eq:adsys_pi_p1}) one has:
  \begin{equation}
    \label{eq:pi_uf_assmp_1}
    \|\pi_{i,j}^*d_{X_{j}}-d_{X_i}\|\le Cm^{-j};
  \end{equation}
  one can then find a metric on $Z_i=X_\infty\sqcup X_i$ extending
  $d_{X_\infty}$ and $d_{X_i}$ such
  that~(\ref{eq:limit_curr_wadsys_s3bis}) holds for the pair of indices
  $(\infty,i)$. Then one glues the spaces $Z_i$ across $X_\infty$ and uses the
  surjectivity of the maps $\{\pi_{i,j}\}$ to deduce~(\ref{eq:limit_curr_wadsys_s3bis}).  
\end{rem}
%%%%%%%%%%%%%%%%%%%%%%%%%%%%%%%%%%%%%%%%%%%%%%%%%%%%%%%%%%%%%%%%%%%%%%%%%%
% @def
\def\mxgall#1.{\setbox1=\hbox{$#1$\unskip} 
  		 \text{\normalfont Mx}
		 ({\ifdim\wd1>0pt #1\else X_{i+1}\fi})}
\pcreatenrm{wd}{\wder\mu_\infty.}{|}
\def\tccell#1,#2.{\setbox1=\hbox{$#1$\unskip}\setbox2=\hbox{$#2$\unskip} 
  		 \widetilde{\text{\normalfont Cell}}_{\ifdim\wd2>0pt #2\else N\fi}
		 ({\ifdim\wd1>0pt #1\else X_i^{(1)}\fi})}
%%%%%%%%%%%%%%%%%%%%%%%%%%%%%%%%%%%%%%%%%%%%%%%%%%%%%%%%%%%%%%%%%%%%%%%%%%
\begin{proof}[Proof of Theorem~\ref{thm:limit_curr_wadsys}]
  \par\noindent\texttt{Step 1:  Proof of {\normalfont\textbf{(mGH)}}
    and {\normalfont\textbf{(Nag)}}}.
  \par The existence of the inverse limit follows as in
  Theorem~\ref{thm:adsys_pi}. The proof of \textbf{(Nag)} follows from
  Theorem~\ref{thm:tower}; in fact, if $\sigma\in\ccell
  X_{k_0},.$ we have that the maps:
  \begin{equation}
    \label{eq:limit_currs_wadsys_p1}
    \pi_{\infty,i}:\pi^{-1}_{\infty,k_0}(\sigma)\to\pi_i(\pi^{-1}_{\infty,k_0}(\sigma))
  \end{equation}
  show that $\pi^{-1}_{\infty,k_0}(\sigma)$ has property
  \textbf{(TPA)}($N$) and so it has small Nagata dimension at most $N$. A
  lower bound on the small Nagata dimension follows from a lower bound
  on the topological dimension as the map
  in~(\ref{eq:limit_currs_wadsys_p1}) is light and so
  cannot decrease the topological dimension (\cite[Thm.~1.24.4]{engelking_topbook}).
  \par The claim
  about the cover-separation and scale being uniform holds as all elements of $\sigma\in\ccell
  X_{k_0},.$ are isometric to a rescaled copy of $[0,1]^N$. The claim
  about the large Nagata dimension follows because in that case one can
  replace $\sigma$ by $X_{k_0}$ (and the Nagata covers for
  $X_{k_0}$ can be also used to produce Nagata covers for $X_{j}$
  if $j<k_0$ at scales $>m^{-j}$).
  \par\noindent\texttt{Step 2: Definition of maximal galleries.}
  \par We start the proof of \textbf{(Wea)} by constructing the
  derivations $D_{i,\alpha}$; without loss of generality we take
  $\alpha=1$ and by locality assume that $\inf I=0$, $X_0=[0,1]^N$ and
  $\mu_0=\lebmeas N.\on[0,1]^N$. As the coordinate functions $x_\alpha$
  on $[0,1]^N$ can be canonically pulled back to each $X_i$ we will just
  write $x_\alpha$ for $x_\alpha\circ\pi_{i,j}$ in the following.
  \par A string of cells:
  \begin{equation}
    \label{eq:limit_currs_wadsys_p2}
    \{\sigma_0,\cdots,\sigma_t\}\subset\ccell X_i^{(k)},.
  \end{equation}
  is an \textbf{$x_1$-gallery} if:
  \begin{description}
  \item[(Mx1)] $\sigma_i\ne\sigma_{i+1}$ and $\max x_1(\sigma_i) = \min
    x_1(\sigma_{i+1})$ for $0\le i\le t-1$;
  \item[(Mx2)] $\sigma_i$ and $\sigma_{i+1}$ share a codimension-$1$
    face on which $x_1$ is constant. 
  \end{description}
  If an $x_1$-gallery $g$ contains an
  $x_1$-gallery $g'$ we say that $g$ \textbf{extends} $g'$; if $g$
  does not admit an $x_1$-gallery properly extending it, then $g$ is
  called \textbf{maximal} and the set of maximal $x_1$-galleries is
  denoted by $\mxgall X_i^{(k)}.$. Given $\sigma\in\ccell X_i^{(k)},.$ we use $\partial_+\sigma$ to
  denote the union of the cells of $\ccell X_{i}^{(k)},.$ which bound the
  $(N-1)$-dimensional face of $\sigma$ on which $x_1$ is maximal and
  induce on it an orientation opposite to that induced by $\sigma$
  ($\partial_-\sigma$ is defined similarly considering the face on which
  $x_1$ is minimal).
  \par To $g\in\mxgall X_i^{(k)}.$ we can associate the measure
  $\lebmeas N.\on g$ which is just Lebesgue measure on each cell of
  $g$. We now construct measures $Q_i$ on $\mxgall X_i.$ such that:
  \begin{equation}
    \label{eq:limit_currs_wadsys_p3}
    \mu_i=\sum_{g\in\mxgall X_i.}\lebmeas N.\on g\,Q_i(g).
  \end{equation}
  Note that each $g\in\mxgall X_i^{(k)}.$ is contained in a unique
  $\hat{g}\in \mxgall X_i.$ and if we let $Q_i^{(k)}(g)=Q_i(\hat{g})$,
  (\ref{eq:limit_currs_wadsys_p3}) implies the more general version:
  \begin{equation}
    \label{eq:limit_currs_wadsys_p4}
    \mu_i=\sum_{g\in\mxgall X_i^{(k)}.}\lebmeas N.\on g\,Q_i^{(k)}(g).
  \end{equation}
  As $\mxgall X_0.$ is a singleton, we let $Q_0=1$ on $\mxgall X_0.$ so
  that~(\ref{eq:limit_currs_wadsys_p3}) holds for $i=0$. The measure
  $Q_{i+1}$ is defined by recursion: given
  \begin{equation}
    \label{eq:limit_currs_wadsys_p5}
    g=\{\sigma_1,\cdots,\sigma_{m^{i+1}}\}\in\mxgall X_{i+1}.
  \end{equation}
  we let:
  \begin{equation}
    \label{eq:limit_currs_wadsys_p6}
    Q_{i+1}(g)=Q_i^{(1)}(\pi_i(g))\times\frac{\mu_{i+1}(\sigma_1)}{\mu_i(\pi_i(\sigma_1))}
    \frac{\mu_{i+1}(\sigma_2)}{\mu_{i+1}(\partial_+\sigma_1)}\times\cdots\times\frac{\mu_{i+1}(\sigma_{m^{i+1}})}{\mu_{i+1}(\partial_+\sigma_{m^{i+1}-1})}.
  \end{equation}
  \par\noindent\texttt{Step 3: Proof
    of~{\normalfont~(\ref{eq:limit_currs_wadsys_p3})}.}
  \par We prove~(\ref{eq:limit_currs_wadsys_p3}) by induction; assume
  that it holds for $\mu_i$ and let
  \begin{equation}
    \label{eq:limit_currs_wadsys_p7}
    \nu_{i+1}=\sum_{g\in\mxgall.}\lebmeas N.\on g\,Q_{i+1}(g);
  \end{equation}
  it suffices to show that whenever $\sigma\in\ccell X_{i+1},.$ one has:
  \begin{equation}
    \label{eq:limit_currs_wadsys_p8}
    \nu_{i+1}(\sigma)=\mu_{i+1}(\sigma).
  \end{equation}
  \par To $\sigma$ it is associated a unique $t\in\{1,\cdots,m^{i+1}\}$
  such that, whenever $g\in\mxgall X_{i+1}.$ extends $\sigma$, $\sigma$
  is the $t$-th element of $g$. In~(\ref{eq:limit_currs_wadsys_p6}) if
  we sum on $\{\sigma_{t+1},\cdots,\sigma_{m^{i+1}}\}$ and use
  \textbf{(IFlux)} we get:
  \begin{equation}
    \label{eq:limit_currs_wadsys_p9}
    \begin{split}
      Q_{i+1}(\{\text{$g$ extends
        $\{\sigma_1,\cdots,\sigma_t\}$})&=Q_i^{(1)}(\{\text{$\pi_i(g)$
        extends $\{\pi_i(\sigma_1),\cdots,\pi_i(\sigma_t)\}$}))\\
      &\mskip \munsplit\times\frac{\mu_{i+1}(\sigma_1)}{\mu_i(\pi_i(\sigma_1))}
      \frac{\mu_{i+1}(\sigma_2)}{\mu_{i+1}(\partial_+\sigma_1)}\times\cdots\times\frac{\mu_{i+1}(\sigma_{t})}{\mu_{i+1}(\partial_+\sigma_{t-1})}.
    \end{split}
  \end{equation}
  We will compute  $Q_{i+1}(\{\text{$g$ extends
    $\{\sigma_t\}$}\})$ by summing~(\ref{eq:limit_currs_wadsys_p9})
  on $\sigma_1,\cdots,\sigma_{t-1}$; note that the order of
  summation matters as $\sigma_t$ is kept fixed. Concretely,
  $\sigma_{t-1}\in\partial_{-}\sigma_t$,
  $\sigma_{t-2}\in\partial_{-}\sigma_{t-1}$, etc\dots We thus start
  by removing the innermost sums, i.e.~start with $\sigma_1$, then
  $\sigma_2$, etc\dots\ For example, using \textbf{(IFlux)} and \textbf{(IOpen)}:
  \begin{equation}
    \label{eq:limit_currs_wadsys_p10}
    \begin{split}
      \sum_{\sigma_1}Q_{i+1}(\{
      \text{$g$ extends $\{\sigma_1,\cdots,\sigma_t\}$}
      \})
      &= \sum_{\pi_i(\sigma_1)}Q_i^{(1)}(\{\text{$\pi_i(g)$
        extends $\{\pi_i(\sigma_1),\cdots,\pi_i(\sigma_t)\}$}\})\\
      &\mskip \munsplit\times\frac{\mu_{i+1}(\sigma_2)}{\mu_i(\pi_i(\sigma_1))}
      \frac{\mu_{i+1}(\sigma_3)}{\mu_{i+1}(\partial_+\sigma_2)}\times\cdots\times\frac{\mu_{i+1}(\sigma_{t})}{\mu_{i+1}(\partial_+\sigma_{t-1})}.
    \end{split}
  \end{equation}
  But note that for $2\le j\le m$, $\pi_i(\sigma_j)$ and
  $\pi_i(\sigma_{j-1})$ belong to the same cell of $\ccell X_i,.$
  implying
  \begin{equation}
    \label{eq:limit_currs_wadsys_p11}
    \mu_i(\pi_i(\sigma_j))=\mu_i(\pi_i(\sigma_{j-1})),
  \end{equation}
  from which we obtain:
  \begin{equation}
    \label{eq:limit_currs_wadsys_p12}
    \begin{split}
      Q_{i+1}(\{\text{$g$ extends
        $\{\sigma_2,\cdots,\sigma_t\}$}\})&=Q_i^{(1)}(\{\text{$\pi_i(g)$
        extends $\{\pi_i(\sigma_2),\cdots,\pi_i(\sigma_t)\}$}\})\\
      &\mskip \munsplit\times\frac{\mu_{i+1}(\sigma_2)}{\mu_i(\pi_i(\sigma_2))}
      \frac{\mu_{i+1}(\sigma_3)}{\mu_{i+1}(\partial_+\sigma_2)}\times\cdots\times\frac{\mu_{i+1}(\sigma_{t})}{\mu_{i+1}(\partial_+\sigma_{t-1})}.
    \end{split}
  \end{equation}
  We can iterate the previous argument up to $m$; in particular, if
  $t\le m$ we would get:
  \begin{equation}
    \label{eq:limit_currs_wadsys_p13}
    Q_{i+1}(\{\text{$g$ extends $\{\sigma_t\}$}\}) =
    \frac{Q_i^{(1)}(\{\text{$\pi_i(g)$ extends $\{\pi_i(\sigma_t)\}$}\})}
    {\mu_i(\pi_i(\sigma_t))}\mu_{i+1}(\sigma_t).
  \end{equation}
  We want to generalize~(\ref{eq:limit_currs_wadsys_p13}) also for $t>m$
  and the main point is illustrated in passing from $m$ to $m+1$. We
  start with:
  \begin{equation}
    \label{eq:limit_currs_wadsys_p14}
    \begin{split}
      Q_{i+1}(\{\text{$g$ extends
        $\{\sigma_m,\cdots,\sigma_t\}$}\})&=\frac{Q_i^{(1)}(\{\text{$\pi_i(g)$
          extends $\{\pi_i(\sigma_m),\cdots,\pi_i(\sigma_t)\}$}\})}{\mu_i(\pi_i(\sigma_m))}\\
      &\mskip \munsplit\times\mu_{i+1}(\sigma_m)\frac{\mu_{i+1}(\sigma_{m+1})}{\mu_{i+1}(\partial_+\sigma_{m})}
      \times\cdots\times\frac{\mu_{i+1}(\sigma_{t})}{\mu_{i+1}(\partial_+\sigma_{t-1})}.
    \end{split}
  \end{equation}
  We now use the definition of $Q_i$ to deduce:
  \begin{equation}
    \label{eq:limit_currs_wadsys_p15}
    \begin{split}    
      Q_i^{(1)}(\{
      \text{$g$ extends $\{\tau_m,\cdots,\tau_t\}$}\}
      )
      &= \frac{
        Q_i^{(2)}(\{
        \text{$\pi_{i-1}(g)$ extends $\{\pi_{i-1}(\tau_m),\cdots,\pi_{i-1}(\tau_t)\}$}\}
        )
      } {
        \mu_{i-1}(\pi_{i-1}(\tau_m))
      }
      \\
      &\mskip \munsplit\times\mu_i(\tau_m)
      \frac{\mu_{i}(\tau_{m+1})}{\mu_{i}(\partial_+\tau_{m})}
      \times\cdots\times\frac{\mu_{i}(\tau_{t})}{\mu_{i}(\partial_+\tau_{t-1})}.
    \end{split}
  \end{equation}
  Now $\pi_{i-1}(\tau_m)$ and $\pi_{i-1}(\tau_{m+1})$ belong to the same
  cell of $\ccell X_{i-2},.$; we thus have:
  \begin{equation}
    \label{eq:limit_currs_wadsys_p16}
    \mu_i(\pi_{i-1}(\tau_m))=\mu_i(\pi_{i-1}(\tau_{m+1})),
  \end{equation}
  and we can sum~(\ref{eq:limit_currs_wadsys_p15}) on $\tau_m$;
applying \textbf{(IFlux)} and \textbf{(IOpen)}, and we thus obtain:
  \begin{equation}
    \label{eq:limit_currs_wadsys_p17}
    \begin{split}
      \sum_{\tau_m}
      Q_i^{(1)}(\{
      \text{$g$ extends $\{\tau_m,\cdots,\tau_t\}$}\}
      )
      &= \frac{
        Q_i^{(2)}(\{
        \text{$\pi_{i-1}(g)$ extends $\{\pi_{i-1}(\tau_{m+1}),\cdots,\pi_{i-1}(\tau_t)\}$}\}
        )
      } {
        \mu_{i-1}(\pi_{i-1}(\tau_{m+1}))
      }
      \\
      &\mskip \munsplit\times\mu_i(\tau_{m+1})
      \frac{\mu_{i}(\tau_{m+2})}{\mu_{i}(\partial_+\tau_{m+1}))}
      \times\cdots\times\frac{\mu_{i}(\tau_{t})}{\mu_{i}(\partial_+\tau_{t-1})}\\
      &=
      Q_i^{(1)}(\{
      \text{$g$ extends $\{\tau_{m+1},\cdots,\tau_t\}$}\}
      );
    \end{split}
  \end{equation}
  thus if in~(\ref{eq:limit_currs_wadsys_p14}) we sum over $\sigma_m$ we
  obtain:
  \begin{equation}
    \label{eq:limit_currs_wadsys_p18}
    \begin{split}
      Q_{i+1}(\{\text{$g$ extends
        $\{\sigma_{m+1},\cdots,\sigma_t\}$}\})&=\frac{Q_i^{(1)}(\{\text{$\pi_i(g)$
          extends $\{\pi_i(\sigma_{m+1}),\cdots,\pi_i(\sigma_t)\}$}\})}{\mu_i(\pi_i(\sigma_{m+1}))}\\
      &\mskip \munsplit\times\mu_{i+1}(\sigma_{m+1})\frac{\mu_{i+1}(\sigma_{m+2})}{\mu_{i+1}(\partial_+\sigma_{m+1}))}
      \times\cdots\times\frac{\mu_{i+1}(\sigma_{t})}{\mu_{i+1}(\partial_+\sigma_{t-1})}.
    \end{split}
  \end{equation}
  Continuing by induction we
  establish~(\ref{eq:limit_currs_wadsys_p13}). 
  \par Now from the inductive hypothesis $\nu_i=\mu_i$ we have:
  \begin{equation}
    \label{eq:limit_currs_wadsys_p19}
    Q_{i}^{(1)}(\{\text{$\pi_i(g)$ extends
      $\pi_i(\sigma_t)$}\}) = m^{(i+1)N}\mu_i(\pi_i(\sigma_t)),
  \end{equation}
  from which (using~(\ref{eq:limit_currs_wadsys_p13})) we get:
  \begin{equation}
    \label{eq:limit_currs_wadsys_p20}
    Q_{i+1}(\{\text{$g$ extends
      $\sigma_t$}\}) = m^{(i+1)N}\mu_{i+1}(\sigma_t)
  \end{equation}
  which implies~(\ref{eq:limit_currs_wadsys_p8}) for $\sigma=\sigma_t$.
  \par\noindent\texttt{Step 4: Construction of Weaver derivations.}
  \par Note that $\pi_{i\#}Q_{i+1}=Q_i^{(1)}$ by the recursive
  definition of $Q_{i+1}$. Let $g\in\mxgall X_i^{(k)}.$ and use $g(0)$ to
  denote the first cell of $g$; then any $y$ on the face $f_{-}$ of
  $g(0)$ where $x_1$ is minimal uniquely determines a unit-speed geodesic segment 
  \begin{equation}
    \label{eq:limit_currs_wadsys_p21}
    \gamma:[0,1]\to X_i^{(k)}
  \end{equation}
  such that:
  \begin{align}
    \label{eq:limit_currs_wadsys_p22}
    x_1\circ\gamma(t)&=t\\
    \label{eq:limit_currs_wadsys_p23}
    \im \gamma&\subset g;
  \end{align}
  if we choose $y\in f_{-}$ according to $\lebmeas N-1.\on f_{-}$ and
  use the Fubini representation in the $x_1$-direction on each cell of
  $g$, (\ref{eq:limit_currs_wadsys_p3}) implies the representation
  \begin{equation}
    \label{eq:limit_currs_wadsys_p24}
    \mu_i = \int\hmeas._\gamma\,dP_i^{(k)}(\gamma)\quad(i\in I)
  \end{equation}
  where $P_i^{(k)}$ is a probability measure concentrated on the set $\Omega_i$ of geodesic segments
  satisfying~(\ref{eq:limit_currs_wadsys_p21})
  and~(\ref{eq:limit_currs_wadsys_p22}). 
  Note also that cellular subdivision
  does not affect the probabilities, i.e.~$P_i^{(k)}=P_i$ and that
  $\pi_{i\#}Q_{i+1}=Q_i^{(1)}$ implies
  \begin{equation}
    \label{eq:limit_currs_wadsys_p25}
    \pi_{i\#}P_{i+1}=P_i^{(1)}=P_i.
  \end{equation}
  \par Having arranged mGH-convergence in some (proper and complete) $Z$ we see
  that the measures $\{P_i\}_{i\in I}$ form a Cauchy sequence in the
  weak* topology, and passing to a limit we obtain:
  \begin{equation}
    \label{eq:limit_currs_wadsys_p26}
    \mu_{\infty}=\int\hmeas._\gamma\,dP_\infty(\gamma)
  \end{equation}
  where $P_\infty$ is concentrated on $\Omega_\infty$. We finally let
  $D_{i,1}$ be the Weaver derivation associated to the Alberti
  representation $[P_i,1]$. To construct $D_{i,\alpha}$ one uses
  \texttt{Step 3} and this step for $x_\alpha$.
  \par\noindent\texttt{Step 5:  Proof of {\normalfont\textbf{(Wea)}}.}
  \par Equation~(\ref{eq:limit_curr_wadsys_s2}) follows because the
  Alberti representations used to define $D_{i,\alpha}$ restrict to
  appropriate Fubini-like representations in the $x_\alpha$ direction on
  each cell. Equation~(\ref{eq:limit_curr_wadsys_s3}) follows from the
  compatibility~(\ref{eq:limit_currs_wadsys_p25}) between the $P_i$ (the
  case where $i=\infty$ being handled by a limiting argument).
  %% If somebody complains: it's just integration by parts by testing
  %% against a Lipschitz function.
  \par We now let $i\in I\cup\{\infty\}$; we have:
  \begin{equation}
    \label{eq:limit_currs_wadsys_p27}
    D_{i,\alpha}x_\beta = \delta_{\alpha,\beta}
  \end{equation}
  so that the index of $\wder\mu_i.$ is at least $N$. On the other hand
  by \texttt{Step 1} $X_i$ has property \textbf{(TAP)}$(N)$ and so
  $\wder\mu_i.$ must have index $N$ and be free on $N$ generators. If
  $i$ is finite the fact that $\{D_{i,\alpha}\}_\alpha$ gives a basis of $\wder\mu_i.$
  follows immediately from~(\ref{eq:limit_curr_wadsys_s2}). To show that
  $\{D_{\infty,\alpha}\}$ is a basis for $\wder\mu_\infty.$ we choose a
  basis $\{\tilde{D}_{\infty,\alpha}\}$ for $\wder\mu_\infty.$
  satisfying:
  \begin{equation}
    \label{eq:limit_currs_wadsys_p28}
    \wdnrm\tilde{D}_{\infty,\alpha}.=1\quad(\forall\alpha),
  \end{equation}
  and then choose a matrix-valued measurable map $(M_{\alpha,\beta})$ with:
  \begin{equation}
    \label{eq:limit_currs_wadsys_p29}
    D_{\infty,\alpha}=\sum_\beta M_{\alpha,\beta}\tilde{D}_{\infty,\beta};
  \end{equation}
  using the functions $x_\gamma$ we arrive at the identity:
  \begin{equation}
    \label{eq:limit_currs_wadsys_p30}
    \text{Id}_N = (D_{\infty,\alpha}x_\gamma)=(M_{\alpha,\beta})\cdot (\tilde{D}_{\infty,\beta}x_\gamma),
  \end{equation}
  which provides a lower bound on $\det(M_{\alpha,\beta})$ showing that 
  $\{D_{\infty,\alpha}\}$ is a basis for $\wder\mu_\infty.$.
  \par\noindent\texttt{Step 6:  Proof
    of~{\normalfont~(\ref{eq:limit_curr_wadsys_s7})} when {\normalfont
      $i,j$} are finite.}
  \par Note that for $i=j+1$ (\ref{eq:limit_curr_wadsys_s5}) is
  just~(\ref{eq:normal_sys_3}). In particular, as long as $i$ is finite,
  (\ref{eq:limit_curr_wadsys_s6}) follows from the definition of $N_i$,
  (\ref{eq:normal_sys_2}), and the construction of the Weaver
  derivations $\{D_{i,\alpha}\}$ as long as one chooses the ``order'' of
  the coordinates $x_\alpha$ on each cell of $X_{\inf I}$ compatibly
  with the orientations. Thus, we have:
  \begin{equation}
    \label{eq:limit_currs_wadsys_p31}
    \cmass N_i.=\mu_i,
  \end{equation}
  and so the first equation in~(\ref{eq:limit_curr_wadsys_s7}) follows.
  \par Taking the boundary in~(\ref{eq:normal_sys_2}) we get:
  \begin{equation}
    \label{eq:limit_currs_wadsys_p32}
    \begin{split}
      \partial N_i &= \sum_{\sigma\in\ccell
        X_i,.}\weight(\mu_i,\sigma)\,\partial\mkcurr\sigma.
      =\sum_{\tau\in\ccell
        X_{i-1}^{(1)},.}\sum_{\sigma\in\pi_{i-1}^{-1}(\tau)}
      \weight(\mu_i,\sigma)\,\partial\mkcurr\sigma.\\
      &=\sum_{f_{i-1}\in\ccell
        X_{i-1}^{(1)},.}\sum_{f_i\in\pi_{i-1}^{-1}(f_{i-1})}
      \biggl\{
      \sum_{\sigma\in \pi_{i-1}^{-1}(\bdcell f_{i-1},+.)\cap\bdcell f_i,X_i.}
      \weight(\mu_i,\sigma)\\
      &\mskip\munsplit - \sum_{\sigma\in \pi_{i-1}^{-1}(\bdcell
        f_{i-1},-.)\cap\bdcell f_i,X_i.}
      \weight(\mu_i,\sigma)
      \biggr\}\mkcurr f_i..
    \end{split}
  \end{equation}
  Note that in~(\ref{eq:limit_currs_wadsys_p32}) the terms which
  correspond to $f_{i-1}$ lying in the interior of some $\tau\in\ccell
  X_{i-1},.$ vanish because of \textbf{(IFlux)}. Let $U\Subset X_{i-1}$ open;
  then
  \begin{equation}
    \label{eq:limit_currs_wadsys_p33}
    \begin{split}
      \cmass\partial N_{i-1}.(U)&=\sum_{f_{i-1}\in\ccell X_{i-1},N-1.}
      \left|
        \sum_{\tau\in\bdcell f_{i-1},+.}\weight(\mu_{i-1},\tau)-
        \sum_{\tau\in\bdcell f_{i-1},-.}\weight(\mu_{i-1},\tau)
      \right|\\
      &\times\mskip\munsplit\hmeas N-1.(f_{i-1}\cap U);
    \end{split}
  \end{equation}
let $\tccell X_{i-1}^{(1)},N-1.$ denote the set of those
$f_{i-1}\in\ccell X_{i-1}^{(1)},N-1.$ on the boundary of some cell of
$\ccell X_{i-1},.$; then using \textbf{(IMeas)}, \textbf{(IOr)} and
(\ref{eq:limit_currs_wadsys_p32}) we obtain:
\begin{equation}
  \label{eq:limit_currs_wadsys_p34}
  \begin{split}
    \cmass\partial N_{i-1}.(U)&=\sum_{f_{i-1}\in\tccell X_{i-1}^{(1)},N-1.}
      \left|
        \sum_{\tau\in\bdcell f_{i-1},+.}\weight(\mu_{i-1},\tau)-
        \sum_{\tau\in\bdcell f_{i-1},-.}\weight(\mu_{i-1},\tau)
      \right|\\
      &\times\mskip\munsplit\hmeas N-1.(f_{i-1}\cap U)\\
      &=\sum_{f_{i-1}\in\tccell X_{i-1}^{(1)},N-1.}
      \sum_{f_i\in\pi_{i-1}^{-1}(f_{i-1})}
      \biggl|
        \sum_{\sigma\in \pi_{i-1}^{-1}(\bdcell f_{i-1},+.)\cap\bdcell f_i,X_i.}
        \weight(\mu_i,\sigma)\\
        &\mskip\munsplit -
        \sum_{\sigma\in \pi_{i-1}^{-1}(\bdcell f_{i-1},-.)\cap\bdcell f_i,X_i.}
        \weight(\mu_i,\sigma)
      \biggr|\\
      &\times\mskip\munsplit\hmeas N-1.(f_i\cap\pi_{i-1}^{-1}(U))\\
      &=\cmass\partial N_i.(\pi_{i-1}^{-1}(U)),
  \end{split}
\end{equation}
from which we conclude that the second equation
in~(\ref{eq:limit_curr_wadsys_s7}) also holds for $i,j$ finite.
\par\noindent\texttt{Step 7:  Weak convergence for normal currents.}
\par For a discussion about weak convergence for normal currents we
refer the reader to \cite[Sec.~5]{lang_local_currents}. The main point is that
on sets of normal currents for which one has uniform bounds on the
masses and the masses of the boundaries (e.g.~the situation in
\texttt{Step 6}) the weak topology is metrizable. Concretely, consider
the following set of ``formal'' $N$-forms: 
\begin{multline}
  \label{eq:limit_curr_wadsys_p35} 
  \Omega=\{
  \text{$\omega=f_0\,df_1\wedge\cdots\wedge df_N$ where
    $f_\alpha:Z\to\real$ are $1$-Lipschitz,}\\ 
  \text{$\spt f_\alpha$ is
    bounded and $|f_0|\le 1$}
  \};
\end{multline}
to show that $\{N_i\}_{i\in I}$ converges it suffices to show that, for
each $\omega\in\Omega$, $\{N_i(\omega)\}_{i\in I}$ is a Cauchy
sequence. On $\Omega$ we introduce (pseudo)distances:
\begin{equation}
  \label{eq:limit_curr_wadsys_p36}
  \begin{aligned}
    \|\omega-\omega'\|_{\Omega}&=\max_{\alpha=0,\cdots,N}\|f_\alpha-f'_\alpha\|_{\infty}\\
    \|\omega-\omega'\|_{\Omega,
      Y}&=\max_{\alpha=0,\cdots,N}\sup_{y\in
      Y}|(f_\alpha-f'_\alpha)(y)|\quad(Y\subset X),
  \end{aligned}
\end{equation}
and finally define the ``support'' $\spt\omega$ of $\omega$ to be
$\spt f_0$.
\par We will use  \cite[Thm.~5.2]{lang_local_currents}: there is a $C(N)$
such that if $T$ is an $N$-dimensional normal current:
\begin{equation}
  \label{eq:limit_curr_wadsys_p37}
  |T(\omega)-T(\omega')|\le C(N)\|\omega-\omega'\|_{\Omega,\spt T}(\cmass
  T.(\spt\omega) +\cmass\partial T.(\spt\omega)),
\end{equation}
whenever $\omega,\omega'\in\Omega$.
Fix $i,j\in I\cup\{\infty\}$ and $\omega\in\Omega$. We let (which is
only well-defined on $X_i$) $\pi_{i,j}^*\omega$ denote the pull-back
of $\omega$ (restricted on $X_j$), i.e.:
\begin{equation}
  \label{eq:limit_curr_wadsys_p38}
  \pi_{i,j}^*\omega=f_0\circ\pi_{i,j}\,df_1\circ\pi_{i,j}\wedge\cdots\wedge
  df_N\circ\pi_{i,j};
\end{equation}
by~(\ref{eq:limit_curr_wadsys_s3bis}) we have:
\begin{equation}
  \label{eq:limit_curr_wadsys_p39}
  \|\omega-\pi_{i,j}^*\omega\|_{\Omega, X_i}=O(m^{-j})
\end{equation}
where the constants hidden in the $O(\cdot)$-notation are uniform in
$i,j$. 
As
\begin{equation}
  \label{eq:limit_curr_wadsys_p40}
  |N_{i+1}(\omega)-N_i(\omega)|=|N_{i+1}(\omega)-N_{i+1}(\pi_{i}^*\omega)|,
\end{equation}
combining~(\ref{eq:limit_curr_wadsys_p37}),
(\ref{eq:limit_curr_wadsys_p39}) with the uniform bounds on $\cmass
N_i.(\spt\omega)$ and $\cmass \partial N_i.(\spt\omega)$ given by
\texttt{Step 6}, we conclude that $\{N_i(\omega)\}_{i\in I}$ is Cauchy
and thus the limit $N_\infty$ exists.
\par\noindent\texttt{Step 8: Proof of
  {\normalfont~(\ref{eq:limit_curr_wadsys_s5})},
  {\normalfont~(\ref{eq:limit_curr_wadsys_s6})}
  and {\normalfont~(\ref{eq:limit_curr_wadsys_s7})} when $i=\infty$.}
\par To show~(\ref{eq:limit_curr_wadsys_s5}) we use that $N_\infty$ is
normal (i.e.~the joint continuity~(\ref{eq:boundary_7})) to deduce:
\begin{equation}
  \label{eq:limit_curr_wadsys_p41}
  N_\infty (\pi_{\infty,j}^*\omega)=\lim_{k\to\infty}N_{\infty}(\pi_{k,j}^*\omega),
\end{equation}
where for each $k$ $\pi_{k,j}^*\omega$ has been extended from $X_k$ to
$Z$ using MacShane's Lemma on each $f_\alpha$; we then use the
definition of the weak topology and~(\ref{eq:limit_curr_wadsys_p39}):
\begin{equation}
  \label{eq:limit_curr_wadsys_p42}
  \begin{split}
    N_\infty(\pi_{\infty,j}^*\omega)&=\lim_{k\to\infty}\sup_{i\ge
      k}N_i(\pi_{k,j}^*\omega)\\
    &=\lim_{k\to\infty}\sup_{i\ge
      k}(N_i(\pi_{i,j}^*\omega)+O(m^{-k}))\\
    &=\lim_{k\to\infty}(N_j(\omega)+O(m^{-k}))\\
    &=N_j(\omega).
  \end{split}
\end{equation}
\par We now  prove~(\ref{eq:limit_curr_wadsys_s7}); fix $U\Subset X_j$
and $\varepsilon>0$; find
$\{\omega_p\}_{p=1}^{S(\varepsilon)}\subset\Omega$ such that the sets
$\{\spt\omega_p\}_{p=1}^{S(\varepsilon)}$ are pairwise disjoint,
$\spt\omega_p\cap X_j\subset U$ and
\begin{equation}
  \label{eq:limit_curr_wadsys_p43}
  \cmass N_j.(U)\le\sum_{p=1}^{S(\varepsilon)}N_j(\omega_p)+\varepsilon.
\end{equation}
As the sets $\{\spt\pi_{\infty,j}^*\omega_p\cap
X_\infty\}_{p=1}^{S(\varepsilon)}$ are pairwise disjoint and:
\begin{equation}
  \label{eq:limit_curr_wadsys_p44}
  \spt\pi_{\infty,j}^*\omega_p\cap
  X_\infty\subset\pi_{\infty,j}^{-1}(U),
\end{equation}
we have
\begin{equation}
  \label{eq:limit_curr_wadsys_p45}
  \begin{split}
    \cmass
    N_j.(U)&\le\sum_{p=1}^{S(\varepsilon)}N_\infty(\pi_{\infty,j}^*\omega_p)+\varepsilon\\
    &\le\cmass N_\infty.(\pi_{\infty,j}^{-1}(U))+\varepsilon,
  \end{split}
\end{equation}
which establishes 
\begin{equation}
  \label{eq:limit_curr_wadsys_p46}
  \cmass N_j.(U)\le \cmass N_\infty.(\pi_{\infty,j}^{-1}(U)).
\end{equation}
We now find a finite subcomplex $L$ of $X_j^{(k)}$ (where $k$ depends
on $\varepsilon$) with $U\subset L$ and:
\begin{equation}
  \label{eq:limit_curr_wadsys_p47}
  \cmass N_j.(L\setminus U)\le\varepsilon.
\end{equation}
For each gallery-connected component $L_a$ of $L$, the argument of
\textbf{Step 7} applied to the inverse system
$\{\pi_{i,j}^{-1}(L_a)\}_{i\in I}$ shows that:
\begin{equation}
  \label{eq:limit_curr_wadsys_p48}
  N_i\on\pi_{i,j}^{-1}(L)\to N_\infty\on\pi_{\infty,j}^{-1}(L).
\end{equation}
By lower semicontinuity of the mass:
\begin{equation}
  \label{eq:limit_curr_wadsys_p49}
  \begin{split}
    \cmass N_\infty.(\pi_{\infty,j}^{-1}(U))&\le\cmass
    N_\infty\on\pi_{\infty,j}^{-1}(L).(Z)\\
    &\le\liminf_{i\to\infty}\cmass
    N_i\on\pi_{i,j}^{-1}(L).(Z)\\
    &=\cmass N_j.(L)\\
    &\le \cmass N_j.(U)+\varepsilon.
  \end{split}
\end{equation}
Thus (\ref{eq:limit_curr_wadsys_p46}),
(\ref{eq:limit_curr_wadsys_p49}) give 
\begin{equation}
  \label{eq:limit_curr_wadsys_p50}
  \cmass N_\infty.(\pi_{\infty,j}^{-1}(U))=\cmass N_j.(U),
\end{equation}
and a similar argument establishes:
\begin{equation}
  \label{eq:limit_curr_wadsys_p51}
  \cmass \partial N_\infty.(\pi_{\infty,j}^{-1}(U))=\cmass \partial N_j.(U).
\end{equation}
\par We now prove~(\ref{eq:limit_curr_wadsys_s6}). Note that
by~(\ref{eq:limit_curr_wadsys_s7}), as $\mu_j\to\mu_\infty$ we have:
\begin{equation}
  \label{eq:limit_curr_wadsys_p52}
  \cmass N_\infty.\le\mu_\infty,
\end{equation}
and thus \cite[Thm.~\sync thm:repr.]{curr_alb} gives:
\begin{equation}
  \label{eq:limit_curr_wadsys_p53}
  N_\infty = \lambda D_{\infty,1}\wedge\cdots\wedge
  D_{\infty,N}\,\mu_\infty;
\end{equation}
concretely, $\lambda$ is, up to a sign, the derivative $\frac{d\cmass
  N_\infty.}{d\mu_\infty}$ and so:
\begin{equation}
  \label{eq:limit_curr_wadsys_p54}
  |\lambda|\le 1.
\end{equation}
Fix $i\in I$ and $\sigma\in\ccell X_i,.$; then
by~(\ref{eq:limit_curr_wadsys_p53}) we have:
\begin{equation}
  \label{eq:limit_curr_wadsys_p55}
  (N_\infty\on\pi^{-1}_{\infty,i}(\sigma))\on dx_2\wedge\cdots\wedge
  dx_N = \lambda \chi_{\pi_{\infty,i}^{-1}(\sigma)}D_{\infty,1}\,\mu_\infty.
\end{equation}
Using~(\ref{eq:limit_curr_wadsys_s5}) we have:
\begin{equation}
  \label{eq:limit_curr_wadsys_p56}
  \pi_{\infty,i\#}(N_\infty\on\pi^{-1}_{\infty,i}(\sigma)\on dx_2\wedge\cdots\wedge dx_N )=(N_i\on\sigma)\on
  dx_2\wedge\cdots\wedge dx_N,
\end{equation}
which combined with~(\ref{eq:limit_curr_wadsys_p53}) yields:
\begin{equation}
  \label{eq:limit_curr_wadsys_p57}
  \pi_{\infty,i\#}(\lambda
  \chi_{\pi_{\infty,i}^{-1}(\sigma)}D_{\infty,1}\,\mu_\infty) = \chi_{\sigma}D_{i,1}\,\mu_i;
\end{equation}
using the disintegration theorem, (\ref{eq:limit_curr_wadsys_s3}) and
(\ref{eq:limit_curr_wadsys_p54}) we conclude that $\lambda=1$
$\mu_\infty$-a.e.~on $\pi^{-1}_{\infty,i}(\sigma)$.
\end{proof}
%%%%%%%%%%%%%%%%%%%%%%%%%%%%%%%%%%%%%%%%%%%%%%%%%%%%%%%%%%%%%%%%%%%%%%%%%%
\section{The Examples}
\label{sec:exas}
In this Section we discuss how to use
Theorem~\ref{thm:limit_curr_wadsys} to obtain $N$-dimensional normal
currrents whose supports are purely $2$-unrectifiable. The idea is
inspired by the repeated use of branched covers, akin to what happens
for Pontryagin surfaces (see \cite{buyalo_nagata}). However, these
examples are topologically distinct from Pontryagin surfaces, e.g.~the
cohomology groups are different. Before proving the $2$-unrectifiability
we show that the $2$-dimensional versions of these examples do not
contain embedded surfaces  (Thm.~\ref{thm:no_emb}), in sharp contrast
with the case of Carnot groups which contain H\"older surfaces.
%%%%%%%%%%%%%%%%%%%%%%%%%%%%%%%%%%%%%%%%%%%%%%%%%%%%%%%%%%%%%%%%%%%%%%%%%%
% @def
\def\oldec{_\text{\normalfont old}}
\def\nwdec{_\text{\normalfont new}}
\def\odec{_\text{\normalfont o}}
\def\cdec{_\text{\normalfont c}}
\def\adec{_\text{\normalfont a}}
%%%%%%%%%%%%%%%%%%%%%%%%%%%%%%%%%%%%%%%%%%%%%%%%%%%%%%%%%%%%%%%%%%%%%%%%%%
\begin{constr}[$2$-Branched covers of ${[0,1]^N}$]
  \label{constr:branched_cvs}
  Fix $1\le\alpha<\beta\le N$ and let
  $q_{\alpha,\beta}:[0,1]^N\to[0,1]^2$ denote the projection on the
  $(\alpha,\beta)$-plane. We subdivide $[0,1]^N$ into $5^N$ equal
  cells to get a cube-complex $\sigma\oldec$ and we subdivide
  $[0,1]^2$ into $5^2$ equal cells to get a square-complex
  $\Sigma\oldec$. Subdivide $\Sigma\oldec$ into the following
  square-complexes which have disjoint interiors:
  \begin{enumerate}
  \item $\Sigma\cdec$ corresponding to the central square; let $\sigma\cdec=q_{\alpha,\beta}^{-1}(\Sigma\cdec)$;
  \item $\Sigma\adec$ corresponding to the middle ring of squares; let $\sigma\adec=q_{\alpha,\beta}^{-1}(\Sigma\adec)$;
  \item $\Sigma\odec$ corresponding to the outer ring of squares; let $\sigma\odec=q_{\alpha,\beta}^{-1}(\Sigma\odec)$.
  \end{enumerate}
  As $\pi_1(\sigma\adec)=\zahlen$ we let $\tilde{\sigma\adec}$ be the
  cube-complex which is the double cover of $\sigma\adec$, and let
  $\sigma\nwdec$ be the cube-complex obtained by gluing
  $\tilde{\sigma\adec}$, $\sigma\cdec$ and $\sigma\odec$ so that any
  two points on the boundary of $\tilde{\sigma\adec}$ which map to the
  same point of $\sigma\adec$ are identified. The covering map:
  \begin{equation}
    \label{eq:branched_cvs_1}
    \tilde{\sigma\adec} \to \sigma\adec
  \end{equation}
  induces an open, cellular, surjective map:
  \begin{equation}
    \label{eq:branched_cvs_2}
    \phi_{\alpha,\beta}:\sigma\nwdec\to\sigma\oldec
  \end{equation}
  which restricts to an isometry on each face of $\sigma\nwdec$.
  \par Whenever $\sigma\oldec\simeq[0,5^{-i}]^N$ one can similarly
  apply a rescaled version of the previous construction to obtain
  $\sigma\nwdec$.
  \par Given a measure $\mu\oldec$ on $\sigma\oldec$ which is a
  multiple of Lebesgue measure,  one can evenly
  split $\mu\oldec$ across the $N$-cells of $\sigma\nwdec$ that
  project to the same cell of $\sigma\oldec$ to obtain $\mu\nwdec$
  which satisfies $\phi_{\alpha,\beta\#}\mu\nwdec=\mu\oldec$
\end{constr}
%%%%%%%%%%%%%%%%%%%%%%%%%%%%%%%%%%%%%%%%%%%%%%%%%%%%%%%%%%%%%%%%%%%%%%%%%%
\begin{constr}[The examples]
  \label{constr:exas}
  Let $X_0$ be an orientable cube-complex which is the union of its
  $N$-cells which are all isometric to $[0,1]^N$. We will also assume
  that there is a doubling measure $\mu_0$ on $X_0$ which restricts to
  a multiple of Lebesgue measure on each element of $\ccell X_0,.$ and
  such that $(X_0,\mu_0)$ is a $(1,1)$-PI space.
  \par Let $I=\{0\}\cup\natural$ and for $i\ge 1$ choose
  $1\le\alpha_i<\beta_i\le N$ such that each possible pair
  $(\alpha,\beta)$ in the $2$-ordered combinations of $\{1,\cdots,N\}$
  occurss i.o.
  \par We obtain an inverse system if we construct $(X_{i+1},\mu_{i+1})$
  by applying the branched covering
  construction~\ref{constr:branched_cvs} to each $\sigma\in\ccell
  X_i,.$ using the plane $(\alpha_i,\beta_i)$.
\end{constr}
%%%%%%%%%%%%%%%%%%%%%%%%%%%%%%%%%%%%%%%%%%%%%%%%%%%%%%%%%%%%%%%%%%%%%%%%%%
\begin{thm}[The examples are AIS and WAIS]
  \label{thm:exas_are_invs}
  The system $\{(X_i,\mu_i)\}$ is both AIS and WAIS; moreover each
  weak tangent $Y$ of the inverse limit is, up to a dilating
  factor in $[1/5,1)$, the inverse limit of a system  obtained using
  Construction~\ref{constr:exas}.
\end{thm}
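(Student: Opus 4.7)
My plan is to verify the axioms of AIS and WAIS one at a time, exploiting the local nature of Construction~\ref{constr:branched_cvs} on each cell, and then analyze weak tangents via a scaling argument.

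I would first dispatch (IBGeom), (IOpen), (IOr), (IGall), which are essentially bookkeeping. Each $N$-cell of $X_{i+1}$ is a rescaled unit cube by construction; any link in $X_{i+1}$ is either isometric to a link in a subdivided Euclidean cube or a $2$-to-$1$ image thereof, yielding a uniform $C\gmdec$. The map $\phi_{\alpha,\beta}$ of~(\ref{eq:branched_cvs_2}) is an open cellular surjection obtained by gluing a covering map to identities, and is face-isometric, so the standard orientation of $[0,1]^N$ pulls back consistently through all levels. Gallery-connectedness propagates through branched covers of connected complexes, and fibres $\pi_i^{-1}(x)$ have uniformly bounded cardinality with preimages joined through $\tilde{\sigma\adec}$ by a bounded-length gallery.

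For (IMeas), (IPoinc), and (IFlux) the essential tool is the even-splitting prescription of the measure: the identity $\pi_{i\#}\mu_{i+1}=\mu_i$ is built in, while adjacent-cell weight ratios lie in $\{1/2,1,2\}$ and combine with the doubling of $\mu_0$ to yield a uniform $C_\mu$. Fixing $f_i\in\ccell X_i^{(1)},N-1.$ interior to some $\sigma\in\ccell X_i,.$ and $f_{i+1}\in\pi_i^{-1}(f_i)$, the local structure depends on which of $\sigma\cdec$, $\tilde{\sigma\adec}$, $\sigma\odec$ contains $f_{i+1}$: off the annular region $\phi_{\alpha,\beta}$ is a local isometry so the two sides of $f_{i+1}$ receive the same weights as the two sides of $f_i$, while on $\tilde{\sigma\adec}$ each side receives half of the corresponding weight. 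Either way, the sums~(\ref{eq:ad_sys_2}) are independent of $\sigma_i\in\bdcell f_i,.$, giving (IPoinc); and since the Euclidean measure on a subdivided cube already balances signed flux across each interior $(N-1)$-face, (IFlux) propagates inductively from level $i$ to level $i+1$. I expect the orientation bookkeeping for (IFlux) to be the main technical step, since one must verify that the halved weights land on the appropriate $+$ or $-$ side after pushing forward by $\phi_{\alpha,\beta}$.

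For the weak-tangent assertion, let $Y$ be a pointed mGH-limit of $(X_\infty,\lambda_n d, p)$ with $\lambda_n\to\infty$, and write $\lambda_n = 5^{j_n}v_n$ with $j_n\in\natural$ and $v_n\in[1/5,1)$ (a unique representation). After passing to a subsequence $v_n\to v\in[1/5,1]$ and a bounded cellular neighbourhood of $\pi_{\infty,j_n}(p)$ in $X_{j_n}$ stabilizes, by finiteness of local combinatorial types, to a finite cube complex $Y_0$ which is a union of its $N$-cells (each isometric to a unit cube) and inherits a doubling $(1,1)$-Poincar\'e measure from the local estimates on $X_\infty$. Applying Construction~\ref{constr:exas} to $Y_0$ with the tail sequence $\{(\alpha_{j_n+k},\beta_{j_n+k})\}_{k\ge 0}$ yields a WAIS whose inverse limit, multiplied by $v\in[1/5,1]$, equals $Y$; the boundary case $v=1$ is absorbed by relabeling $j_n\mapsto j_n+1$ (which sends $v=1$ to $v=1/5$), so the dilation factor can always be taken in $[1/5,1)$ as claimed.
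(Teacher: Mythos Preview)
There is a genuine gap in your treatment of \textbf{(IBGeom)}. Your claim that ``any link in $X_{i+1}$ is either isometric to a link in a subdivided Euclidean cube or a $2$-to-$1$ image thereof'' at best relates the link at level $i+1$ to the link at level $i$; it does not control the \emph{accumulation} of link complexity as $i$ grows. Points on the gluing interface $\partial\tilde{\sigma\adec}$ already have strictly larger links than the corresponding points in the subdivided Euclidean cube, and at the next stage the branched-cover construction (now in a possibly different coordinate plane) is applied to each of the $N$-cells meeting there, so the same point may again lie on a branching interface. A priori the link cardinality could therefore grow unboundedly in $i$. The missing idea, which the paper supplies, is to track a point $x_i$ back through its projections $x_j=\pi_{i,j}(x_i)$ and observe that the link can enlarge in passing from $x_j$ to $x_{j+1}$ only when $q_{\alpha_j,\beta_j}(x_j)\in\partial\Sigma\adec$; crucially, for a \emph{fixed} pair $(\alpha,\beta)$ the sets $\partial\Sigma\adec$ arising at different generations are pairwise disjoint (once $q_{\alpha,\beta}(x_j)$ lands on $\partial\Sigma\adec$ it lies on the grid of every later subdivision in those coordinates, hence off every later $\partial\Sigma\adec$), so growth can occur at most once per pair, i.e.\ at most $\binom{N}{2}$ times in total. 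Without this disjointness observation the uniform constant $C\gmdec$ does not follow, and your weak-tangent step inherits the gap, since the ``finiteness of local combinatorial types'' you invoke is exactly a uniform link bound.

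The remaining axioms and the weak-tangent scaling are handled essentially as in the paper, which dismisses the former as ``following from the recursive construction'' and treats the latter by the same compactness argument you describe in more detail.
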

%%%%%%%%%%%%%%%%%%%%%%%%%%%%%%%%%%%%%%%%%%%%%%%%%%%%%%%%%%%%%%%%%%%%%%%%%%
\begin{proof}
  \par\noindent\texttt{Step 1: Admissibility.}
  \par The only assertion that requires justification is
  \textbf{(IBGeom)}, as the other axioms follow from the recursive
  construction. Fix $x_i\in X_i$ and for $j\le i$ let
  $x_j=\pi_{i,j}(x_i)$. In passing from $x_j$ to $x_{j+1}$ the
  cardinality of the link can increase only under the following
  circumstance: if $x_j\in\sigma$, where $\sigma\in\ccell X_j,.$, and
  if
  \begin{equation}
    \label{eq:exas_are_invs_p1}
    q_{\alpha_j,\beta_j}:\sigma\to\real^2
  \end{equation}
  denotes the projection on the $(\alpha_j,\beta_j)$-plane,
  one must have
  \begin{equation}
    \label{eq:exas_are_invs_p2}
    q_{\alpha_j,\beta_j}(x_j)\in\partial\Sigma\adec,
  \end{equation}
  using the notation of Construction~\ref{constr:branched_cvs}.
  Note that for cubes $\sigma$, $\sigma'$ of different generations $k$, $k'$, if
  $(\alpha_k,\beta_k)=(\alpha_{k'},\beta_{k'})$ the corresponding 
  $\partial\Sigma\adec$ and $\partial\Sigma'\adec$ are disjoint, and so
  if~(\ref{eq:exas_are_invs_p2}) occurrs for a given pair
  $(\alpha_j,\beta_j)$, it can occurr only once for that pair. In that
  case, the cardinality of the link can at most double; as there are at
  most $N\choose 2$ distinct $(\alpha,\beta)$-pairs, \textbf{(IBGeom)}
  follows.
  \par\noindent\texttt{Step 2: Weak tangents}
\par  Assuming $(\lambda_n X_\infty,p_n,\mu_n)\to (Y_\infty, q,\nu_\infty)$, up to rescaling
  the metric on $Y_\infty$ by a dilating factor in $[1/5,1)$, we can
  assume that $\lambda_n=5^{m_n}$. Choose compatible systems of
  basepoints $\{p_{n,k}\}$ so that
  \begin{equation}
    \label{eq:exas_are_invs_p3} \pi_{\infty,k}(p_n)=p_{n,k}.
  \end{equation}
  A compactness argument shows that, for each $k\in I$
  $(\lambda_nX_{k+m_n},p_{n,k+m_n},\mu_{n,k+m_n})$ subconverges to some $(Y_k,q_k,\nu_k)$,
  and the $\{(Y_k,\nu_k)\}_{k\in I}$ form an AIS/WAIS whose inverse limit is $(Y_\infty,\nu_\infty)$.
\end{proof}
%%%%%%%%%%%%%%%%%%%%%%%%%%%%%%%%%%%%%%%%%%%%%%%%%%%%%%%%%%%%%%%%%%%%%%%%%% 
In the following it will be useful to consider the case of
Construction~\ref{constr:exas} where $X_0$ is $2$-dimensional, and
where the branching is applied for $\infty$-many values of $i$, but
not necessarily all values of $i$; in this case we will use
$\{Y_i\}_{i\in I}$ (omitting the measures)
to denote the inverse system and $Y_\infty$ the inverse limit.
%%%%%%%%%%%%%%%%%%%%%%%%%%%%%%%%%%%%%%%%%%%%%%%%%%%%%%%%%%%%%%%%%%%%%%%%%% 
\begin{thm}
  \label{thm:no_emb}
  There is no embedding:
  \begin{equation}
    \label{eq:no_emb_s1}
    j:\bar D^2 = [0,1]^2 \to Y_\infty,
  \end{equation}
  where $\bar D^2$ is the $2$-dimensional closed disk.
\end{thm}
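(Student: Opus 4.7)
The plan is to exploit the ``three-sheet'' non-manifold local structure that appears at every branch curve of $Y_\infty$, together with the fact that such curves are dense at arbitrarily fine scales, to rule out the existence of a topologically embedded disk.

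First I analyse $\sigma\nwdec$ locally. At a point $p$ on the branch curve $\partial\sigma\cdec$, a neighbourhood in $\sigma\nwdec$ is obtained from three half-disks identified along a common boundary arc: the half-disk coming from $\sigma\cdec$ together with the two half-disks coming from the two $\phi_{\alpha,\beta}$-preimages in $\tilde\sigma\adec$. This is the standard ``tripod times interval'' non-manifold model, and the same holds along the inner boundary of $\sigma\odec$. Since by \textbf{(IOpen)} the bonding maps $\pi_i$ restrict to isometries on faces, the $\pi_{\infty,i+1}$-preimage of any branch curve created at level $i$ inherits the same three-sheet local structure inside $Y_\infty$. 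In the $2$-dimensional setting every cell is branched at every step, so branch curves of arbitrarily large level are dense in every nonempty open subset of $Y_\infty$.

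Suppose for contradiction that $j:\bar D^2\to Y_\infty$ is an embedding, pick an interior point $x_0\in\bar D^2$, and let $B\subset\bar D^2$ be a small closed disk neighbourhood of $x_0$. For $i$ sufficiently large, $\pi_{\infty,i}(j(B))$ is confined to a single cell $\sigma_i\in\ccell Y_i,.$; since the fibres of $\pi_{\infty,i}$ are totally disconnected while $j(B)$ has topological dimension $2$, the Hurewicz--Wallman inequality forces $\pi_{\infty,i}(j(B))$ to have topological dimension $2$ inside $\sigma_i$, hence nonempty interior. Consequently $j(B)$ must meet the $\pi_{\infty,i+1}$-preimage of some new branch curve produced at level $i+1$ inside $\sigma_i$.

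At any such intersection point $q$, the ambient $Y_\infty$ is locally three half-disks meeting along an arc while $j(B)$ is locally a $2$-manifold; so $j(B)$ must occupy exactly two of the three sheets, and by continuity the choice is locally constant along the arc of branch curve meeting $j(B)$. The third, unoccupied sheet carries a definite fraction of the $\mu_\infty$-mass of every small ball around $q$. The plan is then to iterate: by Theorem~\ref{thm:exas_are_invs} the two chosen sheets are themselves pieces of WAIS inverse limits of the same type, so the same dichotomy applies at every subsequent level inside $j(B)$ and forces $j(B)$ to miss a definite fraction of $\mu_\infty$-mass at every scale. The main obstacle is to promote this scale-by-scale sheet avoidance to a global contradiction; the natural route is to show that iterating forces the $\mu_\infty$-density of the image of $j$ at $j(x_0)$ to vanish, contradicting the lower density bound imposed on a topological $2$-disk by the local $(1,1)$-Poincar\'e inequality of Theorem~\ref{thm:adsys_pi} combined with the mass identities of Theorem~\ref{thm:limit_curr_wadsys}.
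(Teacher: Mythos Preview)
Your plan diverges from the paper after the common opening move (the projection $\pi_{\infty,k}\circ j$ must, for some $k$, have $2$-dimensional image and hence hit a full $2$-cell $\sigma$; the paper gets this by retraction to the $1$-skeleton, you by Hurewicz--Wallman). From there the paper argues purely topologically and in the \emph{opposite} direction from you: a connectedness argument shows $j(\bar D^2)$ must contain \emph{all} of $\pi_{\infty,k}^{-1}(\sigma)$, not merely two of three sheets. Indeed, if at some later level the image missed part of the double cover $\tilde\sigma\adec$, one could retract that missed region to $\partial\tilde\sigma\adec$ and thereby disconnect $j(\bar D^2)$. Iterating, $j(\bar D^2)$ has nonempty interior in $Y_\infty$, and the contradiction is that interior points of $\bar D^2$ have simply connected neighbourhoods while no nonempty open subset of $Y_\infty$ is simply connected.

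Your route has a genuine gap at the closing step. The ``lower density bound imposed on a topological $2$-disk by the local $(1,1)$-Poincar\'e inequality'' does not exist: Poincar\'e inequalities constrain oscillations of functions, not the $\mu_\infty$-measure of arbitrary subsets, and a merely \emph{topological} embedding carries no a priori measure information (invariance of domain is unavailable since $Y_\infty$ is not a manifold). So even if the sheet-avoidance iteration went through, there is nothing for the vanishing density to contradict. Two smaller issues: your claim that for $i$ large $\pi_{\infty,i}(j(B))$ is confined to a single cell is backwards (cells shrink as $i\to\infty$), and the step ``$j(B)$ occupies exactly two of the three sheets, with locally constant choice'' would need a careful argument for a possibly wild topological embedding. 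The paper's disconnection argument sidesteps both by forcing the image to swallow every sheet, which is then what collides with $\bar D^2$ being locally simply connected.
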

%%%%%%%%%%%%%%%%%%%%%%%%%%%%%%%%%%%%%%%%%%%%%%%%%%%%%%%%%%%%%%%%%%%%%%%%%%
\begin{proof}
  We will argue by contradiction.
  \par\noindent\texttt{Step 1: $j(\bar D^2)$ has nonempty interior.}
  \par Assume that for each $k\in\natural$ $\pi_{\infty, k}(j(\bar D^2))$
  does not contain an $2$-cell of $Y_k$; then $\pi_{\infty,k}(j(\bar
  D^2))$ can be retracted to the $1$-skeleton of $Y_k$, and
  by~(\ref{eq:adsys_pi_p1}) the retraction can be chosen to produce a
  $1$-dimensional simplicial complex which is an
  $O(m^{-k})$-approximation of $j(\bar D^2)$. Letting $k\to\infty$ we
  conclude that $j(\bar D^2)$ has topological dimension $1$, which
  contradicts that $\bar D^2$ has topological dimension $2$. 
  \par Fix $k_0$ and $\sigma$ such that $\sigma\in\ccell Y_{k_0},2.$,
  $\sigma\subset\pi_{\infty,k_0}(j(\bar D^2))$. As in
  Construction~\ref{constr:branched_cvs} we let 
  \begin{equation}
    \label{eq:no_emb_p1}
    \tilde{\sigma}=\tilde{\sigma\cdec}\cup\tilde{\sigma\adec}\cup\tilde{\sigma\odec}=\pi_{k_0}^{-1}(\sigma);
  \end{equation}
  as $\pi_{k_0}|\tilde{\sigma\cdec}\cup\tilde{\sigma\odec}$ is injective
  we deduce
  \begin{equation}
    \label{eq:no_emb_p2}
    \tilde{\sigma\cdec}\cup\tilde{\sigma\odec}\subset\pi_{\infty,k_0+1}(j(\bar D^2)).
  \end{equation}
  Were
  \begin{equation}
    \label{eq:no_emb_p3}
    \tilde{\sigma\adec}\setminus\pi_{\infty,k_0+1}(j(\bar D^2))\ne\emptyset,
  \end{equation}
  we could retract via a map $r$ the set:
  \begin{equation}
    \label{eq:no_emb_p4}
    \tilde{\sigma\adec}\cap\pi_{\infty,k_0+1}(j(\bar D^2))
  \end{equation}
  to $\partial\tilde{\sigma\adec}$ while keeping
  $Y_{k_0+1}\setminus\tilde\sigma\adec$ fixed, and then
  $(r\circ\pi_{\infty,k_0+1})^{-1}(\tilde{\sigma\cdec})$ and
  $(r\circ\pi_{\infty,k_0+1})^{-1}(Y_{k_0+1}\setminus\tilde{\sigma\cdec})$
  would disconnect $j(\bar D^2)$, a contradiction. Hence:
  \begin{equation}
    \label{eq:no_emb_p5}
    \tilde{\sigma}\subset\pi_{\infty,k_0+1}(j(\bar D^2)).
  \end{equation}
  \par We can iterate the previous argument on each cell of
  $\tilde{\sigma}^{(1)}$ and continue to conclude that
  \begin{equation}
    \label{eq:no_emb_p6}
    \pi_{\infty,k_0}^{-1}(\sigma)\subset j(\bar D^2),
  \end{equation}
  showing that $j(\bar D^2)$ has nonempty interior.
  \par\noindent\texttt{Step 2: Simple connectedness.}
  \par As any point of $\bar D^2$ has a simply-connected neighbourhood,
  it suffices to show that each non-empty open $U\subset Y_\infty$ is not
  simply-connected.  In fact, for some $k$, there is a loop $\gamma$ in
  $U$ such that $\pi_{\infty,k}\circ\gamma$ is a generator of the
  fundamental group of $\partial\sigma$, where $\sigma\in\ccell
  X_k,2.$. But then $\pi_{\infty,k+1}\circ\gamma$ is not homotopically
  trivial in $Y_{k+1}$ by construction of the branched cover.
\end{proof}
%%%%%%%%%%%%%%%%%%%%%%%%%%%%%%%%%%%%%%%%%%%%%%%%%%%%%%%%%%%%%%%%%%%%%%%%%%
\begin{thm}
  \label{thm:unrect}
  $X_\infty$ is purely $2$-unrectifiable.
\end{thm}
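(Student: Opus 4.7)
The plan is to argue by contradiction. Assume $f:A\subset\real^2\to X_\infty$ is Lipschitz with $\hmeas 2.(f(A))>0$, and derive a contradiction from the branched cover structure in a single direction.

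\smallskip
\noindent\texttt{Step 1 (reduction to a bi-Lipschitz section).} By Kirchheim's theorem, after restricting to a subset of positive measure we may assume $f$ is bi-Lipschitz onto its image, with metric differential $\mathrm{md}f_x$ equivalent to the Euclidean norm on $\real^2$ for a.e.~$x\in A$. After a further restriction we may assume $f(A)\subset\pi_{\infty,0}^{-1}(\sigma_0)$ for a single cell $\sigma_0\in\ccell X_0,.$ identified with $[0,1]^N$, on which the coordinates $x_1,\dots,x_N$ and projections $q_{\alpha,\beta}:=(x_\alpha,x_\beta)\colon\sigma_0\to\real^2$ make sense. The pulled-back coordinates $h_\gamma:=x_\gamma\circ\pi_{\infty,0}\circ f$ are Lipschitz on $A$, and since $f$ is bi-Lipschitz the combined map $(h_1,\dots,h_N)\colon A\to\real^N$ has metric differential of rank $2$ a.e.; a Fubini/pigeonhole argument then yields a pair $1\le\alpha<\beta\le N$ and a subset $B\subset A$ of positive measure for which $(h_\alpha,h_\beta)|_B$ is bi-Lipschitz. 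Setting $E:=(h_\alpha,h_\beta)(B)$, the map
\begin{equation*}
  g:=f\circ(h_\alpha,h_\beta)^{-1}\colon E\to X_\infty
\end{equation*}
is a bi-Lipschitz section of $q_{\alpha,\beta}\circ\pi_{\infty,0}$ over $E$.

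\smallskip
\noindent\texttt{Step 2 (blow-up to a weak tangent).} Choose a Lebesgue density point $p_0\in E$ and rescale $X_\infty$ around $g(p_0)$. By Theorem~\ref{thm:exas_are_invs} a subsequence of the rescalings converges in the pointed measured Gromov--Hausdorff sense to a weak tangent $Y_\infty$ which, up to a factor in $[1/5,1)$, is itself the inverse limit of a Construction~\ref{constr:exas}-type system whose branching-direction sequence is a tail of the original one. In particular every pair still occurs infinitely often among the branchings of this tangent system, and so does our fixed pair $(\alpha,\beta)$. Extracting a further subsequence and rescaling $g$ accordingly gives a bi-Lipschitz section $\tilde g\colon\real^2\to Y_\infty$ of the projection $\tilde q_{\alpha,\beta}\colon Y_\infty\to\real^2$ induced by $x_\alpha,x_\beta$ on the tangent system; the uniform mass control of Theorem~\ref{thm:limit_curr_wadsys}\textbf{(Nor)}, equation~(\ref{eq:limit_curr_wadsys_s7}), guarantees that no mass is lost in the limit and that $\tilde g$ remains genuinely bi-Lipschitz onto its image.

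\smallskip
\noindent\texttt{Step 3 (monodromy obstruction).} Pick a level $i$ of the tangent system at which the branching direction is $(\alpha,\beta)$ and a cell $\sigma\in\ccell Y_i,N.$ whose $\tilde q_{\alpha,\beta}$-image is contained in the $\tilde q_{\alpha,\beta}$-image of $\tilde g(\real^2)$. Arguing as in \texttt{Step 1} of the proof of Theorem~\ref{thm:no_emb} -- using that $\tilde g(\real^2)$ is a topological $2$-disk, that the connectedness of $\real^2$ forbids disconnecting the image by retracting away from a branched region, and that $\tilde g$ is a section -- one shows that $\tilde g(\real^2)$ must contain the preimage $\pi_{\infty,i+1}^{-1}(\sigma_{\text{new}})$ of the branched cover, and in particular a continuous loop $\gamma\subset\tilde g(\real^2)$ whose $\tilde q_{\alpha,\beta}$-image generates $\pi_1$ of the middle annular region $\sigma_a$ of the $(\alpha,\beta)$-slice. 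Restricted to such a loop, $\tilde g$ is a continuous section of the nontrivial double cover $\tilde\sigma_a\to\sigma_a$, which is impossible by the $\zahlen/2$-monodromy of the cover. This contradicts the existence of $\tilde g$, and hence of the original $f$.

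\smallskip
The main obstacle is \texttt{Step 3}: one must pass from the metric bi-Lipschitz property of $\tilde g$ to the topological statement that the image contains a loop genuinely winding around the branching annulus. This is the $N$-dimensional analogue of \texttt{Step 1} in Theorem~\ref{thm:no_emb} and uses the section property crucially, since otherwise the image could merely graze the annular region without producing the needed loop. A secondary point is the choice of coordinate pair in \texttt{Step 1}: since the module $\wder \mu_\infty.$ is free on the $N$ generators $\{D_{\infty,\alpha}\}$ by Theorem~\ref{thm:limit_curr_wadsys}\textbf{(Wea)} and since $f$ is bi-Lipschitz, rank-$2$ of $\mathrm{md}f$ forces some pair $(\alpha,\beta)$ to be non-degenerate on a set of positive measure, which is what feeds the argument.
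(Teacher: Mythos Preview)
Your overall strategy --- isolate a coordinate pair $(\alpha,\beta)$ on which the would-be parametrization is nondegenerate, blow up at a density point, and appeal to the branched-cover structure --- matches the paper's. The gap is in Step~3.

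The paper does \emph{not} run the topological obstruction in the $N$-dimensional tangent. Before blowing up it collapses $X_\infty$ to a $2$-dimensional system: for each $k$ it defines $Z_k$ as the square complex obtained by collapsing every cell of $X_k$ in the directions $x_\gamma$, $\gamma\ne\alpha,\beta$, producing $1$-Lipschitz maps $\psi_k:X_k\to Z_k$ compatible with the bonding maps. The $\{Z_k\}$ then form a $2$-dimensional inverse system of the $\{Y_k\}$ type described just before Theorem~\ref{thm:no_emb} (with branching precisely at the levels where $(\alpha_i,\beta_i)=(\alpha,\beta)$). One blows up $\psi_\infty\circ f$ and obtains $G:\real^2\to Y_\infty$ with $q_{\alpha\beta}\circ G$ linear and nonsingular; restricting to $\bar D^2$ gives an embedding into a genuinely $2$-dimensional $Y_\infty$, contradicting Theorem~\ref{thm:no_emb} directly.

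Your Step~3 stays in the $N$-dimensional tangent, and two problems arise. First, the claim that $\tilde g(\real^2)$ ``must contain the preimage $\pi_{\infty,i+1}^{-1}(\sigma_{\text{\normalfont new}})$'' is false for $N>2$ on dimension grounds, so Step~1 of Theorem~\ref{thm:no_emb} cannot be transported verbatim. Second, and more seriously, the monodromy obstruction you invoke need not hold in the $N$-dimensional picture. In Construction~\ref{constr:branched_cvs} the double cover $\tilde\sigma_{\text{\normalfont a}}$ is collapsed $2$-to-$1$ along its \emph{entire} boundary, which for $N>2$ includes the faces in the $x_\gamma$-directions. Consequently a continuous section of $q_{\alpha,\beta}$ over the annulus $\Sigma_{\text{\normalfont a}}$ into $\sigma_{\text{\normalfont new}}$ \emph{can} exist: one lets the $x_\gamma$-coordinate touch the cell boundary at a single angular position, passes through that pinch point, and switches sheets. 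Since your bi-Lipschitz $\tilde g$ is permitted to drift in the $x_\gamma$-directions by an amount comparable to the cell size, nothing in your argument excludes this behaviour, and the loop you produce need not detect any $\zahlen/2$-monodromy. The collapse $\psi_\infty$ eliminates exactly this escape route --- there are no transverse directions left in $Z_\infty$ --- which is why the paper introduces it.
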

\begin{proof}
  \par\noindent\texttt{Step 1: Reduction to $2$ dimensions.}
  \par We argue by contradiction assuming that there are a compact set
  $K$ and a Lipschitz map $f$:
  \begin{equation}
    \label{eq:unrect_p1}
    f:K\subset\real^2\to X_\infty
  \end{equation}
  with $\hmeas 2.(f(K))>0$. Without loss of generality we can assume
  that $\inf I=0$ and $X_0=[0,1]^N$. 
  \par As $k$ varies in $I$ consider $1$-Lipschitz functions 
  \begin{equation}
    \label{eq:unrect_p2}
    \psi:X_k\to\real:
  \end{equation}
  the collection $\{\pi^*_{\infty,k}\psi\}_{k,\psi}$ uniformly separates
  points in $X_\infty$; concretely, given $x,x'\in X_\infty$ one can
  find $k\in I$ and a $1$-Lispchitz 
  \begin{equation}
    \label{eq:unrect_p3}
    \psi:X_k\to\real
  \end{equation}
  such that:
  \begin{equation}
    \label{eq:unrect_p4}
    \frac{1}{2}d_{X_\infty}(x,x')\le\psi(\pi_{\infty,k}(x))-\psi(\pi_{\infty,k}(x'));
  \end{equation}
  in fact, we know from~(\ref{eq:pi_uf_assmp_1}) that:
  \begin{equation}
    \label{eq:unrect_p5}
    \left |
      d_{X_\infty}(x,x') - (\pi^*_{\infty,k}d_{X_k})(x,x')
    \right|=O(m^{-k}).
  \end{equation}
  \par By the   Stone-Weierstrass Theorem for Lipschitz
  algebras~\cite[Thm.~4.1.8]{weaver_book99} the unital algebra generated by
  $\{\pi^*_{\infty,k}\psi\}_{k,\psi}$ is weak* dense in $\lipalg
  X_\infty.$. Moreover, each cell of $X_k$ is isometric to a cube in
  $\real^N$ and so the differential $d\pi^{*}_{\infty,k}\psi$ is a
  linear combination of $dx_1,\cdots,dx_N$ (we abuse notation and just
  write $x_\alpha$ for $x_\alpha\circ\pi_{\infty,0}$). Thus, any derivation
  $D\in\wder\nu.$, $\nu$ being a Radon measure on $X_\infty$, is
  determined by $\{Dx_\alpha\}_{\alpha=1,\cdots,N}$.
  \par Let $\partial_1,\partial_2$ be the standard basis of
  $\wder{\lebmeas 2.}\on K.$; as $\hmeas 2.(f(K))>0$ the
  area formula \cite[Thm.~8.2]{ambrosio-rectifiability} implies that the derivations
  $f_{\#}\partial_1, f_{\#}\partial_2$ must be independent, and so there
  must  be $\alpha<\beta$ and a  compact $\tilde K\subset K$ with
  $\lebmeas 2.(\tilde K)>0$ such that
  \begin{equation}
    \label{eq:unrect_p6}
    \det
    \begin{pmatrix}
      \partial_1(f\circ x_\alpha) & \partial_1(f\circ x_\beta)\\
      \partial_2(f\circ x_\alpha) & \partial_2(f\circ x_\beta)
    \end{pmatrix}
  \end{equation}
  is uniformly bounded away from $0$ on $\tilde K$.
  \par For each $k\in I$ let $Z_k$ be the cube-complex obtained by
  collapsing each cell of $X_k$ in the directions
  $\{x_\gamma\}_{\gamma\ne\alpha,\beta}$. The $\{Z_k\}_{k\in I}$ form
  a $2$-dimensional
  AIS/WAIS as the $\{Y_k\}$ discussed before Theorem~\ref{thm:no_emb}. In
  particular, if $q_{\alpha\beta}$ denotes projection on the
  $(\alpha,\beta)$ plane, we have commutative diagrams:
  \begin{equation}
    \label{eq:unrect_p7}
    \xy
    (0,20)*+{X_\infty}="xf"; (20,20)*+{Z_\infty}="zf";
    (0,10)*+{X_i}="xi"; (20,10)*+{Z_i}="zi";
    (0,0)*+{[0,1]^N}="cn"; (20,0)*+{[0,1]^2}="q";
    {\ar "xf"; "zf"}?*!/_2mm/{\psi_\infty};
    {\ar "xi"; "zi"}?*!/_2mm/{\psi_i};
    {\ar "cn"; "q"}?*!/_2mm/{\psi_0};
    {\ar "xf"; "xi"}?*!/^4mm/{\pi_{\infty,i}};
    {\ar "xi"; "cn"}?*!/^4mm/{\pi_{i,0}};
    {\ar "zf"; "zi"}?*!/_4mm/{\tilde\pi_{\infty,i}};
    {\ar "zi"; "q"}?*!/_10mm/{\tilde\pi_{i,0}=q_{\alpha\beta}};
    \endxy
  \end{equation}
  where the functions $\{\psi_i\}$ are $1$-Lipschitz.
  \par\noindent\texttt{Step 2: Blow-up.}
  \par Consider:
  \begin{equation}
    \label{eq:unrect_p8}
    \xy
    (0,20)*+{\tilde K}="source";
    (20,20)*+{Z_\infty}="zf";
    (30,10)*+{;};
    (20,0)*+{[0,1]^2}="q";
    {\ar "source"; "zf"}?*!/_2mm/{\psi_\infty\circ f};
    {\ar "zf"; "q"}?*!/_4mm/{q_{\alpha\beta}};
    {\ar@{->}_{q_{\alpha\beta}\circ\psi_\infty\circ f} "source"; "q"};
    \endxy 
  \end{equation}
  blowing up $\psi_\infty\circ f$ at a density point of $\tilde K$ we
  obtain a diagram:
  \begin{equation}
    \label{eq:unrect_p9}
    \xy
    (0,20)*+{\real^2}="source";
    (20,20)*+{Y_\infty}="zf";
    (30,10)*+{,};
    (20,0)*+{\real^2}="q";
    {\ar "source"; "zf"}?*!/_2mm/{G};
    {\ar "zf"; "q"}?*!/_4mm/{q_{\alpha\beta}};
    {\ar@{->}_{q_{\alpha\beta}\circ G} "source"; "q"};
    \endxy 
  \end{equation}
  where $q_{\alpha\beta}\circ G$ must be linear; by the bounds
  on~(\ref{eq:unrect_p6}) $q_{\alpha\beta}\circ G$ is non-singular, so
  taking $\bar D^2\subset\real^2$
  \begin{equation}
    \label{eq:unrect_p10}
    G:\bar D^2\to Y_\infty
  \end{equation}
  gives an embedding contradicting Theorem~\ref{thm:no_emb} (one can
  also argue that $q_{\alpha\beta}$ is not injective).
\end{proof}
%%%%%%%%%%%%%%%%%%%%%%%%%%%%%%%%%%%%%%%%%%%%%%%%%%%%%%%%%%%%%%%%%%%%%%%%%%
\section{Approximation by cubical currents}
\label{sec:appx_cube}
In this Section we discuss how to use currents associated to cube
complexes to approximate normal currents (sitting in $l^\infty$) in
the flat and weak topologies.
%%%%%%%%%%%%%%%%%%%%%%%%%%%%%%%%%%%%%%%%%%%%%%%%%%%%%%%%%%%%%%%%%%%%%%%%%%
\begin{defn}
  \label{defn:cubical_current}
  Let $X$ be an $N$-dimensional finite cube-complex. A \textbf{cubical
    current} (supported on $X$) is a normal current of the form:
  \begin{equation}
    \label{eq:cubical_current_1}
    \sum_{\sigma\in\ccell X,.}\weight(\sigma)\mkcurr \sigma..
  \end{equation}
\end{defn}
%%%%%%%%%%%%%%%%%%%%%%%%%%%%%%%%%%%%%%%%%%%%%%%%%%%%%%%%%%%%%%%%%%%%%%%%%%
% @\def
\def\pjdec{_\text{\normalfont pj}}
\def\smdec{_\text{\normalfont sm}}
\def\cbdec{_\text{\normalfont cb}}
\def\cpbdec{_\text{$\partial$\normalfont cb}}
\def\fldec{_\text{\normalfont {f}{l}}} %{f}{l} suppresses ligature
\def\hypplane{\mathscr{S}}
\def\dfodec{_\text{\normalfont df,$1$}}
\def\dftdec{_\text{\normalfont df,$2$}}
%%%%%%%%%%%%%%%%%%%%%%%%%%%%%%%%%%%%%%%%%%%%%%%%%%%%%%%%%%%%%%%%%%%%%%%%%%
\begin{thm}
  \label{thm:cubical_appx}
  Let $T$ be a compactly-supported normal current in $l^\infty$. For
  each $\varepsilon>0$ there is a compactly-supported cubical current
  $T\cdec$ such that:
  \begin{align}
    \label{eq:cubical_appx_s1}
    \flatnrm T-T\cdec. &\le\varepsilon\\
    \label{eq:cubical_appx_s2}
    \ncnrm T\cdec.&\le\ncnrm T.+\varepsilon\\
    \label{eq:cubical_appx_s3}
    \spt T\cdec&\subset B(\spt T,\varepsilon).
  \end{align}
  In particular, $T$ can be approximated in the weak topology by a
  sequence of cubical currents $\{T_n\}$ with:
  \begin{align}
    \label{eq:cubical_appx_s4}
    \lim_{n\to\infty}\ncnrm T_n.&=\ncnrm T.\\
    \label{eq:cubical_appx_s5}
    \lim_{n\to\infty}\sup_{y\in\spt T_n}\inf_{z\in\spt T}\|y-z\|_\infty&=0.
  \end{align}
\end{thm}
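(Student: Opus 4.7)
The plan is to argue in two stages: first reduce $T$ to a normal current supported in a finite-dimensional subspace of $l^\infty$ via pushforward under a Lipschitz map close to the identity; then apply the cubical Federer--Fleming deformation theorem in that finite-dimensional subspace.

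For the finite-dimensional reduction, fix $\delta>0$ much smaller than $\varepsilon$. Using compactness of $\spt T$, pick a finite $\delta$-net $\{y_1,\dots,y_M\}\subset\spt T$ and approximate each $y_j$ within $\delta$ by a finitely-supported $\tilde y_j\in l^\infty$; all $\tilde y_j$ then lie in $V:=\{x\in l^\infty:x_k=0\text{ for }k>n\}\cong\ell^\infty_n$ for some $n=n(\delta)$. With a Lipschitz partition of unity $\{\phi_j\}$ subordinate to $\{B(y_j,2\delta)\}$, set $F:l^\infty\to V$, $F(x):=\sum_j\phi_j(x)\tilde y_j$; then $\|F(x)-x\|_\infty\le 3\delta$ on $\spt T$. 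Applying the linear-homotopy formula with $H_t:=(1-t)\,\mathrm{id}+tF$ in the Banach space $l^\infty$ yields $T-F_\# T=\partial R+R'$ with $\mcnrm R.+\mcnrm R'.\lesssim\delta\,\ncnrm T.$, hence $\flatnrm T-F_\# T.\lesssim\delta\,\ncnrm T.$ and $\spt F_\# T\subset B(\spt T,3\delta)$.

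For the cubical deformation, $F_\# T$ is a compactly-supported normal current in $V\cong\ell^\infty_n$, to which I apply the Federer--Fleming cubical deformation theorem at mesh size $\eta\ll\delta$, using a random translation of the mesh within $[0,\eta]^n$ and a Fubini argument to select a translate giving mass-efficient bounds. This produces a cubical current $T\cdec=\sum_\sigma w_\sigma\mkcurr\sigma.$ on the fine cubical complex with $\flatnrm F_\# T-T\cdec.=O(\eta\,\ncnrm T.)$, $\mcnrm T\cdec.\le\mcnrm F_\# T.+O(\eta)$, an analogous boundary-mass bound, and $\spt T\cdec\subset B(\spt F_\# T,C\eta)$. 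Combining the two stages and taking $\delta,\eta$ small enough compared to $\varepsilon$ delivers \eqref{eq:cubical_appx_s1}--\eqref{eq:cubical_appx_s3}. The weak-topology claims \eqref{eq:cubical_appx_s4}--\eqref{eq:cubical_appx_s5} follow by applying the main statement with $\varepsilon_n\searrow 0$: flat convergence implies weak convergence (Defn.~\ref{defn:weak_topo}), the sharp bound \eqref{eq:cubical_appx_s2} together with lower semicontinuity of mass yields $\mcnrm T_n.\to\mcnrm T.$ and analogously for boundaries, and \eqref{eq:cubical_appx_s3} gives the one-sided Hausdorff bound \eqref{eq:cubical_appx_s5}.

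The main obstacle is ensuring \eqref{eq:cubical_appx_s2} holds with only an \emph{additive} $\varepsilon$ error rather than a multiplicative dimensional constant: this requires both (a) constructing $F$ in the first stage with Lipschitz constant $\le 1+O(\delta)$, so that $\mcnrm F_\# T.\le(1+O(N\delta))\mcnrm T.$, and (b) using the random-translation/Fubini refinement in the second stage to avoid the dimensional multiplicative constant of the standard Federer--Fleming mass estimate. Point (a) is handled by a careful construction of the partition of unity on a two-scale grid (a coarse quantization followed by a fine smoothing), exploiting that $\spt T$ is totally bounded; point (b) is a standard averaging refinement of the deformation theorem applied in the sup-norm geometry of $\ell^\infty_n$, where axis-aligned cubical projections can be made essentially $1$-Lipschitz.
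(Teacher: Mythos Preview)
Your two-stage outline is the same skeleton as the paper's proof, but the two ``sharp constant'' claims you flag as the main obstacle are precisely where your argument is incomplete, and the paper handles them quite differently.

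For stage one, your partition-of-unity map $F=\sum_j\phi_j\tilde y_j$ does \emph{not} have Lipschitz constant $1+O(\delta)$ by the computation you sketch: writing $F(x)-F(x')=(x-x')+\sum_j(\phi_j(x)-\phi_j(x'))(\tilde y_j-x)$, the correction term is controlled by (overlap multiplicity)$\times\glip\phi_j.\times 3\delta\approx C$, a fixed constant, not $O(\delta)$. Your ``two-scale grid'' remark is too vague to repair this. The paper bypasses the problem entirely by using that $l^\infty$ has the \emph{metric} approximation property: there exist norm-$1$ linear projections $\pi$ onto finite-dimensional subspaces with $\|\pi-\mathrm{id}\|\le\varepsilon\pjdec$ on $\spt T$, so $\ncnrm\pi_\# T.\le\ncnrm T.$ exactly.

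For stage two, the Federer--Fleming deformation theorem, even after averaging over translates, gives $\mcnrm P.\le C(n)\mcnrm S.$ with a genuinely dimensional $C(n)$; your assertion that in $\ell^\infty_n$ the cubical projections are ``essentially $1$-Lipschitz'' is not correct (the radial retraction onto the boundary of a cube blows up near the center regardless of the norm). The paper's device is: first \emph{smooth} $\pi_\# T$ so that $\cmass T\smdec.$ and $\cmass\partial T\smdec.$ are absolutely continuous with respect to Lebesgue measure; then approximate $T\smdec$ and $\partial T\smdec$ \emph{separately} in flat norm by cubical currents $T\cbdec$, $T\cpbdec$ with the sharp bounds $\mcnrm T\cbdec.\le\mcnrm T\smdec.$, $\mcnrm T\cpbdec.\le\mcnrm\partial T\smdec.$ (this is essentially an $L^1$ piecewise-constant approximation, where no dimensional constant appears). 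The deformation theorem is then applied only to the small residual $\partial T\cbdec-T\cpbdec$, so the dimensional constant multiplies something already of order $\varepsilon\cbdec$ and is harmless. You should either adopt this smoothing-then-separate-approximation trick, or supply a genuine reference for a constant-$1$ deformation bound.
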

%%%%%%%%%%%%%%%%%%%%%%%%%%%%%%%%%%%%%%%%%%%%%%%%%%%%%%%%%%%%%%%%%%%%%%%%%%
\begin{rem}
  \label{rem:bounded_app}
  In Theorem~\ref{thm:cubical_appx} one can replace $l^\infty$ with a
  Banach space having the \emph{bounded approximation property}; the
  only difference is the bound~(\ref{eq:cubical_appx_s3}) which
  worsens to:
  \begin{equation}
    \label{eq:bounded_app_1}
    \ncnrm T\cdec.\le C\ncnrm T.+\varepsilon,
  \end{equation}
  $C$ being the norm of the projections used in the bounded
  approximation property.
\end{rem}
%%%%%%%%%%%%%%%%%%%%%%%%%%%%%%%%%%%%%%%%%%%%%%%%%%%%%%%%%%%%%%%%%%%%%%%%%%
\begin{proof}[Proof of Theorem~\ref{thm:cubical_appx}]
  \par\noindent\texttt{Step 1: Choice of a projection.}
  \par The space $l^\infty$ has the bounded approximation property where
  projections can be taken to have norm $1$ (see \cite[Lem.~5.7]{paolini_acyclic}). Concretely, for each parameter $\varepsilon\pjdec>0$
  we can choose a norm-$1$ projection
  \begin{equation}
    \label{eq:cubical_appx_p1}
    \pi: l^\infty\to\hypplane,
  \end{equation}
  $\hypplane$ begin a finite-dimensional hyperplane in $l^\infty$, such
  that:
  \begin{align}
    \label{eq:cubical_appx_p2}
    \pi(\spt T)&\subset B(\spt T,\varepsilon\pjdec)\\
    \label{eq:cubical_appx_p3}
    \sup_{y\in\spt T}\|\pi(y)-y\|_{\infty}&\le\varepsilon\pjdec.
  \end{align}
  Let
  \begin{equation}
    \label{eq:cubical_appx_p4}
    \begin{aligned}
      H:[0,1]\times\spt T &\to l^\infty\\
      (t,x)&\mapsto (1-t)x+t\pi(x);
    \end{aligned}
  \end{equation}
  by the \emph{Homotopy formula} (see \cite[4.1.9, 4.1.10]{federer_gmt}:
  the extension to normal metric currents is straightforward)
  \begin{equation}
    \label{eq:cubical_appx_p5}
    \partial H_{\#}([0,1]\times T) = H(1,\cdot)_{\#}T - H(0,\cdot)_{\#}T
    - H_{\#}([0,1]\times\partial T),
  \end{equation}
  where if 
  \begin{equation}
    \label{eq:cubical_appx_p6}
    T = \sum_{\alpha}D_{\alpha_1}\wedge\cdots\wedge D_{\alpha_N}\,\cmass T.,
  \end{equation}
  we let:
  \begin{equation}
    \label{eq:cubical_appx_p7}
    [0,1]\times T = \sum_{\alpha}\partial_t\wedge
    D_{\alpha_1}\wedge\cdots\wedge D_{\alpha_N}\,\lebmeas .\on[0,1]\times\cmass T..
\end{equation}
We let $T\pjdec=H(1,\cdot)_{\#}T$, and note that by~(\ref{eq:cubical_appx_p3}) and~(\ref{eq:cubical_appx_p5})
we have:
\begin{equation}
  \label{eq:cubical_appx_p8}
  \flatnrm T\pjdec - T.\le\varepsilon\pjdec\ncnrm T.;
\end{equation}
as $H(1,\cdot)=\pi$ we conclude that:
\begin{align}
  \label{eq:cubical_appx_p9}
  \ncnrm T\pjdec.&\le \ncnrm T.\\
  \label{eq:cubical_appx_p10}
  \spt T\pjdec&\subset B(\spt T,\varepsilon\pjdec)\cap\hypplane.
\end{align}
\par\noindent\texttt{Step 2: Smoothing $T\pjdec$.}
\par $T\pjdec$ can be regarded as a normal current on the
finite-dimensional vector space $\hypplane$; by a standard smoothing
argument (\cite[4.1.18]{federer_gmt}) for each
$\varepsilon\smdec>0$ we can find $T\smdec$ with support in $\hypplane$
such that:
\begin{align}
  \label{eq:cubical_appx_p11}
  \flatnrm T\smdec - T\pjdec.&\le\varepsilon\smdec\\
  \label{eq:cubical_appx_p12}
  \spt T\smdec&\subset B(T\pjdec,\varepsilon\smdec)\\
  \label{eq:cubical_appx_p13}
  \ncnrm T\smdec.&\le \ncnrm T\pjdec.\\
  \label{eq:cubical_appx_p14}
  \cmass T\smdec.&\ll\lebmeas\dim\hypplane.\\
  \label{eq:cubical_appx_p15}
  \cmass\partial T\smdec.&\ll\lebmeas\dim\hypplane.,
\end{align}
where we use $\lebmeas\dim\hypplane.$ to denote Lebesgue measure on
$\hypplane$.
\par Let $\zahlen^{\dim\hypplane}$ denote the integer lattice in
$\hypplane$; because of equations~(\ref{eq:cubical_appx_p14}),
(\ref{eq:cubical_appx_p15}), which express absolute continuity of
$\cmass T\smdec.$ and $\cmass \partial T\smdec.$ wrt.~the Lebesgue
measure, we can approximate these currents in the \emph{flat norm} by cubical
currents: i.e.~for each $\varepsilon\cbdec>0$ we can find 
cubical currents $T\cbdec$, $T\cpbdec$ and $n_0\in\natural$ with
$2^{-n_0}\le\varepsilon\cbdec$ and where:
\begin{description}
\item[(Sm1)] The supports of $T\cbdec$ and $T\cpbdec$ belong,
  respectively, to
  the $N$-skeleton and the $(N-1)$-skeleton of the cubulation of $\hypplane$ associated to
  $2^{-n_0}\zahlen^{\dim\hypplane}$;
\item[(Sm2)] $T\cbdec$ is $N$-dimensional and $T\cpbdec$ is
  $(N-1)$-dimensional and one has:
  \begin{align}
    \label{eq:cubical_appx_p16}
    \mcnrm T\cbdec.&\le\mcnrm T\smdec.\\
    \label{eq:cubical_appx_p17}
    \mcnrm T\cpbdec.&\le\mcnrm\partial T\smdec.\\
    \label{eq:cubical_appx_p18}
    \flatnrm T\cbdec - T\smdec. + \flatnrm T\cpbdec - \partial
    T\smdec.&\le\varepsilon\cbdec\\
\label{eq:cubical_appx_p18bis}
\spt T\cbdec\cup\spt T\cpbdec&\subset B(\spt T\smdec,\varepsilon\cbdec).
  \end{align}
\par\noindent\texttt{Step 3: Application of the Deformation Theorem in
  $\hypplane$.}
\par The argument is then completed applying the Federer-Fleming
Deformation Theorem along the lines of \cite[4.2.23,
4.2.24]{federer_gmt}. 
Concretely, (\ref{eq:cubical_appx_p18}) gives:
\begin{equation}
  \label{eq:cubical_appx_p19}
  \flatnrm \partial T\cbdec - T\cpbdec.\le 2\varepsilon\cbdec,
\end{equation}
where the flat distance can be actually computed in the metric space $B(\spt T\smdec,
3\varepsilon\cbdec)\cap\hypplane$, 
and thus there is a normal current $T\fldec$ supported in $\hypplane$
with:
\begin{align}
  \label{eq:cubical_appx_p20}
  \mcnrm \partial T\cbdec - T\cpbdec - \partial T\fldec. + \mcnrm
  T\fldec. &\le 3\varepsilon\cbdec\\
  \label{eq:cubical_appx_p21}
  \spt T\fldec &\subset B(\spt T\smdec, 3\varepsilon\cbdec).
\end{align}
\end{description}
\par For any $\varepsilon\dfodec>0$, applying the Deformation Theorem
we can find $n_1\in\natural$, an $(N-1)$-cubical current $T\dfodec$
and an $N$-normal current $S\dfodec$ such that:
\begin{description}
\item[(DF1)] The support of $T\dfodec$ belongs to the $(N-1)$-skeleton
  of the cubulation of $\hypplane$ with vertices on
  $2^{-n_1}\zahlen^{\dim\hypplane}$, where
  $2^{-n_1}\le\varepsilon\dfodec$;
\item[(DF2)] Letting $C_\hypplane$ denote a constant depending only on
  $\hypplane$, one has:
  \begin{align}
    \label{eq:cubical_appx_p22}
    \partial T\cbdec - T\cpbdec - \partial T\fldec &= T\dfodec + \partial S\dfodec\\
    \label{eq:cubical_appx_p23}
    \mcnrm S\dfodec.&\le C_\hypplane\varepsilon\dfodec\mcnrm \partial
    T\cbdec - T\cpbdec - \partial T\fldec.\\
    \label{eq:cubical_appx_p24}
    \mcnrm T\dfodec.&\le C_\hypplane\left[
      \mcnrm \partial
      T\cbdec - T\cpbdec - \partial T\fldec. +
      \varepsilon\dfodec\mcnrm\partial T\cpbdec.
    \right]\\
    \label{eq:cubical_appx_p25}
    \spt T\dfodec \cup\spt S\dfodec &\subset B(\spt T\smdec,
    3\varepsilon\cbdec +3\varepsilon\dfodec).
  \end{align}
\end{description}
Note that as $C_\hypplane$ depends only on $\hypplane$, the
rhs.~of~(\ref{eq:cubical_appx_p23}) can be made arbitrarly small by
choosing $\varepsilon\dfodec$ sufficiently small. Note also that in
applying the Deformation Theorem we used that the boundary of
$T\cpbdec$ is cubical in order to claim that $T\dfodec$ is cubical.
\par As by (\ref{eq:cubical_appx_p22}) the boundary of
$S\dfodec+T\fldec$ is cubical, we can reiterate and for
$\varepsilon\dftdec$ we can find $n_2\in\natural$, an $N$-cubical
current $T\dftdec$ and an $(N+1)$-normal current $S\dftdec$ such that:
\begin{description}
\item[(DF3)] The support of $T\dftdec$ belongs to the $N$-skeleton of
  the cubulation of $\hypplane$ with vertices on
  $2^{-n_2}\zahlen^{\dim\hypplane}$, where
  $2^{-n_2}\le\varepsilon\dftdec$;
\item[(DF4)] One has:
  \begin{align}
    \label{eq:cubical_appx_p26}
    S\dfodec + T\fldec &= \partial S\dftdec + T\dftdec\\
    \label{eq:cubical_appx_p27}
    \mcnrm T\dftdec. &\le C_\hypplane \left[
      \mcnrm S\dfodec + T\fldec. + \varepsilon\dftdec\mcnrm
      \partial S\dfodec + \partial T\fldec.
    \right]\\
    \label{eq:cubical_appx_p28}
    \mcnrm S\dftdec. &\le C_\hypplane\varepsilon\dftdec\mcnrm S\dfodec+T\fldec.\\
    \label{eq:cubical_appx_p29}
    \spt T\dftdec \cup \spt S\dftdec &\subset B(\spt T\smdec,
    4\varepsilon\cbdec +4\varepsilon\dfodec +2\varepsilon\dftdec).
  \end{align}
\end{description}
The desired current $T\cdec$ is then obtained by letting
\begin{equation}
  \label{eq:cubical_appx_p30}
  T\cdec = T\cbdec - T\dftdec
\end{equation}
and by choosing the $\varepsilon$-parameters sufficiently small.
\end{proof}
%%%%%%%%%%%%%%%%%%%%%%%%%%%%%%%%%%%%%%%%%%%%%%%%%%%%%%%%%%%%%%%%%%%%%%%%%%
\bibliographystyle{alpha}
\bibliography{sponge_biblio}
%%%%%%%%%%%%%%%%%%%%%%%%%%%%%%%%%%%%%%%%%%%%%%%%%%%%%%%%%%%%%%%%%%%%%%%%%%
\end{document}